\numberwithin{equation}{section} 
\def\C{{\mathbb C}}
\def\N{{\mathbb N}}
\def\P{{\mathbb P}}
\def\R{{\mathbb R}}
\def\*S{{\mathbb S}}
\def\interior{\hspace{0.02in}
\begin{picture}(6,6)
\put (0,0){\line(1,0){6}}
\put (6,0){\line(0,1){6}}
\end{picture} \hspace{0.04in}}
\newcommand{\dis}{\displaystyle}
\newcommand{\cal}{\mathcal}
\newtheorem{theorem}{Theorem}
\newtheorem{proposition}{Proposition}[section]
\newtheorem{lemma}[proposition]{Lemma}
\begin{document}
\title[Explicit Hodge-type decomposition]
{Explicit Hodge-type decomposition on projective complete intersections.}

\author{Gennadi M. Henkin}
\address{Institut de Mathematiques \\ Universite Pierre et Marie Curie \\ 75252 BC247 Paris \\
Cedex 05 \\ France, and CEMI  Acad. Sc. \\ 117418 \\ Moscow, Russia}
\curraddr{}
\email{guennadi.henkin@imj-prg.fr}

\author{Peter L. Polyakov}
\address{University of Wyoming \\ Department of Mathematics \\ 1000 E University Ave
\\ Laramie, WY 82071}
\curraddr{}
\email{polyakov@uwyo.edu}
\thanks{Second author was partially supported by the NEUP program of
the Department of Energy}

\subjclass[2010]{Primary: 14C30, 32S35, 32C30}

\keywords{$\bar\partial$-operator, complete intersection, Hodge decomposition}

\dedicatory{To Carlos Berenstein on occasion of his 70-th birthday}

\begin{abstract}
We construct an explicit homotopy formula for the $\bar\partial$-complex
on a reduced complete intersection subvariety $V\subset\C\P^n$.
This formula can be interpreted as an explicit Hodge-type decomposition for residual
currents on $V$.
\end{abstract}

\maketitle

\section{Introduction.}\label{Introduction}

The goal of the present article is to construct an explicit Hodge-type decomposition
for the $\bar\partial$-operator on complete intersection subvarieties of $\C\P^n$
and to obtain for those varieties a constructive version of the
classical theorem of Hodge \cite{Ho, Wy, Kd}:

\newtheorem{Hthm}{Hodge Theorem}
\renewcommand{\theHthm}{}
\begin{Hthm}\label{ClassicalHodge}
Let $V\subset \C\P^n$ be an algebraic manifold. Let $Z^{(p,q)}(V)$ be the space
of smooth $\bar\partial$-closed $(p,q)$-forms on $V$, and $B^{(p,q)}(V)$ - the space
of smooth $\bar\partial$-exact $(p,q)$- forms on $V$. Then
\begin{itemize}
\item[(i)] there exist a finite-dimensional projection operator
$L:\ Z^{(p,q)}(V) \to H^{(p,q)}(V)$ into the subspace
of real analytic $\bar\partial$-closed forms on $V$ and for $q>0$ a linear operator
$I:\ Z^{(p,q)}(V) \to C^{(p,q-1)}(V)$ such that for an arbitrary $\phi\in Z^{(p,q)}(V)$ the
following equality is satisfied
$$\phi=\bar\partial I[\phi]+L[\phi],$$
\item[(ii)] a form $\phi\in Z^{(p,q)}(V)$ is $\bar\partial$-exact iff $L[\phi]=0$.
\end{itemize}
\end{Hthm}

\indent
Theorems of this type have many applications, especially in algebraic geometry.
However, for some important applications there are at least two difficulties. The first difficulty
is caused by the non-constructiveness of the following remarkable Hodge's
statement: $V$ has to be equipped with an
hermitian metric, and then projection operator $L$ can be chosen to be orthogonal onto the subspace of harmonic $\bar\partial$-closed forms on $V$ (see \cite{Ho, GH, BDIP, V}). The
second difficulty is caused by too abstract formulations of necessary results for applications to varieties with singularities (see \cite{Dl, Gr, Ha, RR}).

\indent
The first difficulty has been overcome (rather recently) only for special cases
($\C\P^n$ and some flag manifolds) in \cite{HP1, Bn, Go, GSS, SS}.
An analytic technique for overcoming the second difficulty was initiated in \cite{HP2}
using an important theory of residual currents of Coleff and Herrera \cite{CH},
based on resolution of singularities of Hironaka \cite{Hi}.
In the present article we further develop our homotopy formulas for the $\bar\partial$-operator
from \cite{HP1} and combine them with the theory of residual currents to obtain a constructive
version of a Hodge-type decomposition for residual $\bar\partial$-cohomologies on complete
intersection subvarieties of $\C\P^n$.

\indent
The main result of the article is formulated in Theorem~\ref{HodgeRepresentation}
below. We notice that the decomposition obtained in this theorem, which explicitly depends
only on polynomials defining $V$, is new even in the case of a nonsingular curve in $\C\P^2$.

\indent
Before formulating this result we have to recap some of the definitions
from \cite{HP3}. Let $V$ be a complete intersection subvariety
\begin{equation}\label{Variety}
V=\left\{z\in \C\P^n:\ P_1(z)=\cdots=P_m(z)=0\right\}
\end{equation}
of dimension $n-m$ in $\C\P^n$ defined by a collection $\left\{P_k\right\}_{k=1}^m$
of homogeneous polynomials. Let
$$\left\{U_{\alpha}=\left\{z\in\C\P^n:\ z_{\alpha}\neq 0\right\}\right\}_{\alpha=0}^n$$
be the standard covering of $\C\P^n$, and let
$${\bf F}^{(\alpha)}(z)=\left[\begin{tabular}{c}
$F^{(\alpha)}_1(z)$\vspace{0.1in}\\
\vdots\vspace{0.1in}\\
$F^{(\alpha)}_m(z)$
\end{tabular}\right]
=\left[\begin{tabular}{c}
$P_1(z)/z_{\alpha}^{\deg P_1}$\vspace{0.1in}\\
\vdots\vspace{0.1in}\\
$P_m(z)/z_{\alpha}^{\deg P_m}$
\end{tabular}\right]$$
be collections of nonhomogeneous polynomials satisfying
$${\bf F}^{(\alpha)}(z)=A_{\alpha\beta}(z)\cdot{\bf F}^{(\beta)}(z)
=\left[\begin{tabular}{ccc}
$\left(z_{\beta}/z_{\alpha}\right)^{\deg P_1}$&$\cdots$&0\vspace{0.1in}\\
\vdots&$\ddots$&\vdots\vspace{0.1in}\\
0&$\cdots$&$\left(z_{\beta}/z_{\alpha}\right)^{\deg P_m}$
\end{tabular}\right]
\cdot{\bf F}^{(\beta)}(z)$$
on $U_{\alpha\beta}=U_{\alpha}\cap U_{\beta}$.\\
\indent
Following \cite{Gr} and \cite{Ha} we consider a line bundle ${\cal L}$ on $V$
with transition functions
$$l_{\alpha\beta}(z)=\det A_{\alpha\beta}
=\left(\frac{z_{\beta}}{z_{\alpha}}\right)^{\sum_{k=1}^m\deg P_k}$$
on $U_{\alpha\beta}$ and the {\it dualizing bundle} on a complete intersection subvariety $V$
\begin{equation}\label{Dualizing}
\omega^{\circ}_V=\omega_{\C\P^n}\otimes{\cal L},
\end{equation}
where $\omega_{\C\P^n}$ is the canonical bundle on $\C\P^n$.\\
\indent
For $q=1,\dots,n-m$ we denote by ${\cal E}^{(n,n-m-q)}\left(V,{\cal L}\right)
={\cal E}^{(0,n-m-q)}\left(V,\omega^{\circ}_V\right)$ the space
of $C^{\infty}$ differential forms of bidegree $(n,n-m-q)$ with coefficients in ${\cal L}$,
i.e. the space of collections of forms
$$\left\{\gamma_{\alpha}\in
{\cal E}^{(n,n-m-q)}\left(U_{\alpha}\right)\right\}_{\alpha=0}^n$$
satisfying
\begin{equation}\label{Transition}
\gamma_{\alpha}=l_{\alpha\beta}\cdot\gamma_{\beta}
+\sum_{k=1}^mF^{(\alpha)}_k\cdot\gamma^{\alpha\beta}_k\
\mbox{on}\ U_{\alpha}\cap U_{\beta}.
\end{equation}
\indent
Then following \cite{CH, HP3, Pa1} we define residual currents and $\bar\partial$-closed
residual currents on $V$. By a residual current of homogeneity zero $\phi\in C_R^{(0,q)}(V)$
we call a collection $\left\{\Phi_{\alpha}^{(0,q)}\right\}_{\alpha=0}^n$
of $C^{\infty}$ differential forms satisfying equalities
\begin{equation}\label{ResidualCurrent}
\Phi_{\alpha}=\Phi_{\beta}+\sum_{k=1}^mF^{(\alpha)}_k
\cdot\Omega^{(\alpha\beta)}_k\ \mbox{on}\ U_{\alpha}\cap U_{\beta},
\end{equation}
acting on $\gamma\in {\cal E}^{(n,n-m-q)}\left(V,{\cal L}\right)$ by the formula
\begin{equation}\label{GlobalCurrent}
\langle\phi,\gamma\rangle=\sum_{\alpha}
\int_{U_{\alpha}}\vartheta_{\alpha}\gamma_{\alpha}\wedge\Phi_{\alpha}
\bigwedge_{k=1}^m\bar\partial\frac{1}{F^{(\alpha)}_k}
\stackrel{\text{def}}{=}\lim_{t\to 0}\sum_{\alpha}\int_{T^{\epsilon(t)}_{\alpha}}
\vartheta_{\alpha}\frac{\gamma_{\alpha}\wedge\Phi_{\alpha}}{\prod_{k=1}^m F^{(\alpha)}_k},
\end{equation}
where $\left\{\vartheta_{\alpha}\right\}_{\alpha=0}^n$ is a partition of unity
subordinate to the covering $\left\{U_{\alpha}\right\}_{\alpha=0}^n$, and
the limit in the right-hand side of \eqref{GlobalCurrent} is taken along an admissible path
in the sense of Coleff-Herrera \cite{CH}, i.e. an analytic map
$\epsilon:[0,1]\to \R^m$ satisfying conditions
\begin{equation}\label{admissible}
\begin{cases}
\lim_{t\to 0}\epsilon_m(t)=0,\\
{\dis \lim_{t\to 0}\frac{\epsilon_j(t)}{\epsilon^l_{j+1}(t)}=0,\
\mbox{for any}\ l\in\N }\ \text{and}\ j=1,\dots,m-1,
\end{cases}
\end{equation}
and
\begin{equation}\label{Tube}
T^{\epsilon(t)}_{\alpha}
=\left\{z\in U_{\alpha}:\ \left|F^{(\alpha)}_k(z)\right|=\epsilon_k(t)\ \text{for}\ k=1,\dots,m\right\}.
\end{equation}
Condition \eqref{admissible}, though looking technical, is essential for the existence of the limit
in the right-hand side of \eqref{GlobalCurrent}, and can not be replaced by a simpler condition
$\epsilon_j(t)\to 0$, $t\to 0$, $j=1,\ldots,m$, as was shown by Passare and Tsikh in \cite{PT}.\\
\indent
A residual current $\phi$ we call $\bar\partial$-closed
$\left(\text{denoted}\ \phi\in Z_R^{(0,q)}(V)\right)$, if it satisfies the following condition
\begin{equation}\label{Closed}
\bar\partial\Phi_{\alpha}
=\sum_{k=1}^mF^{(\alpha)}_k\cdot\Omega^{(\alpha)}_k\ \mbox{on}\ U_{\alpha}.
\end{equation}

\indent
In Theorem~\ref{HodgeRepresentation} below we prove the existence of an explicit
Hodge-type representation formula for $\bar\partial$-closed residual currents
and its main properties. For simplification of formulation
and of the exposition below we assume existence of holomorphic functions
$g_{\alpha}\in H(U_{\alpha})$ for $\alpha\in (0,\dots,n)$ satisfying
\begin{equation}\label{gFunctions}
\begin{array}{ll}
(a)\quad V^{\prime}_{\alpha}=\left\{z\in U_{\alpha}: F^{(\alpha)}_1(z)=\cdots=F^{(\alpha)}_m(z)
=g_{\alpha}(z)=0\right\}\ \text{is a complete intersection in}\ U_{\alpha},\vspace{0.1in}\\
(b)\quad \left(V\cap U_{\alpha}\right)\setminus V^{\prime}_{\alpha}\
\text{is a submanifold in}\ U_{\alpha}.
\end{array}\end{equation}
Existence of such functions is a corollary of the local description of analytic sets (see \cite{RS}).

\begin{theorem}\label{HodgeRepresentation} Let $V\subset \C\P^n$ be a reduced complete
intersection subvariety as in \eqref{Variety}.\\
\indent
Then
\begin{itemize}
\item[(i)]
there exist an explicit finite-dimensional projection operator
(see formula \eqref{LResidue} below)
$$L_{n-m}: Z_R^{(0,n-m)}\left(V\right)\to Z_R^{(0,n-m)}\left(V\right)$$
into the subspace of real analytic $\bar\partial$-closed residual currents and explicit
linear operators (see formula \eqref{SolutionFormula} below)
$$I_q: Z_R^{(0,q)}\left(V\right)\to C^{(0,q-1)}\left(V\right)$$
into the spaces of currents on $V$
for $q=1,\dots,n-m$, so that the following equality is satisfied for an arbitrary
$\phi\in Z_R^{(0,q)}\left(V\right)$:
\begin{equation}\label{HodgeHomotopy}
\phi=\bar\partial I_q[\phi]+L_q[\phi],
\end{equation}
\item[(ii)]
for $q=1,\dots,n-m-1$ we have $L_q=0$, and therefore $I_q[\phi]$ is a current-solution
of equation $\bar\partial\psi=\phi$, which is a residual current on
$V\setminus \bigcup_{\alpha} V^{\prime}_{\alpha}$
defined by the forms in $C^{\infty}\left(U_{\alpha}\setminus V^{\prime}_{\alpha}\right)$,
\item[(iii)]
a $\bar\partial$-closed residual current $\phi\in Z_R^{(0,n-m)}\left(V\right)$
of homogeneity zero is $\bar\partial$-exact, i.e. there exists a current
$\psi\in C^{(0,n-m-1)}\left(V\right)$ such that $\phi=\bar\partial\psi$, iff
\begin{equation}\label{HodgeCondition}
L_{n-m}[\phi]=0.
\end{equation}
\end{itemize}
\end{theorem}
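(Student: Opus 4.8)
The plan is to obtain \eqref{HodgeHomotopy} by descending an ambient Koppelman--Henkin homotopy formula from $\C\P^n$ onto $V$ through the Coleff--Herrera residue. First I would recall from \cite{HP1, HP3} the explicit Cauchy--Fantappi\`e (Koppelman-type) kernels on $\C\P^n$ twisted by the bundle data of \eqref{Dualizing}. These kernels are written entirely in terms of the sections $F^{(\alpha)}_k$ and the auxiliary holomorphic functions $g_{\alpha}$ of \eqref{gFunctions}, and they furnish an ambient identity of the shape $\psi=\bar\partial R_q\psi+R_{q+1}\bar\partial\psi+H_q\psi$ for forms with values in $\omega^{\circ}_V$. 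The operators $I_q$ and $L_{n-m}$ of the theorem are then produced by pairing this ambient formula against the residual current $\bigwedge_{k=1}^m\bar\partial\frac{1}{F^{(\alpha)}_k}$ as in \eqref{GlobalCurrent} and passing to the Coleff--Herrera limit along an admissible path \eqref{admissible}; the transition relations \eqref{Transition}--\eqref{ResidualCurrent} guarantee that the resulting collection glues to a globally defined operator on $V$, which is precisely the content of formulas \eqref{SolutionFormula} and \eqref{LResidue}.

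Next I would establish the homotopy identity itself. Applying $\bar\partial$ under the integral sign and integrating by parts produces the three ambient terms, and the key point is that $\bar\partial$ commutes with the residue operation up to the residue relation, so that after taking the admissible limit the contribution $R_{q+1}\bar\partial\phi$ is controlled by the closedness condition \eqref{Closed}. For $\phi\in Z_R^{(0,q)}(V)$ this contribution drops out and leaves $\phi=\bar\partial I_q[\phi]+L_q[\phi]$. For part (ii) I would identify the image of $L_q$ with residual representatives of $H^q(V,\mathcal{O}_V)$; for a reduced complete intersection the Koszul resolution of $\mathcal{O}_V$ together with the vanishing of the intermediate cohomology $H^q\bigl(\C\P^n,\mathcal{O}(d)\bigr)=0$, $0<q<n$, forces $H^q(V,\mathcal{O}_V)=0$ for $0<q<n-m$, so that $L_q=0$ and $I_q[\phi]$ solves $\bar\partial\psi=\phi$ in that range. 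That $I_q[\phi]$ is itself a residual current, represented by smooth forms on $U_{\alpha}\setminus V^{\prime}_{\alpha}$, follows from the regularity of the kernels away from the singular support carried by $\bigcup_{\alpha}V^{\prime}_{\alpha}$.

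Part (iii) is then cohomological. The ``if'' direction is immediate from \eqref{HodgeHomotopy}: if $L_{n-m}[\phi]=0$ then $\phi=\bar\partial I_{n-m}[\phi]$, so $\psi=I_{n-m}[\phi]$ is the required solution. For the ``only if'' direction I would use that $L_{n-m}[\phi]$ is detected by its pairings with $H^0(V,\omega^{\circ}_V)$ through the residue duality: if $\phi=\bar\partial\psi$ and $\gamma\in H^0(V,\omega^{\circ}_V)$ is a holomorphic section, then $\langle\phi,\gamma\rangle=\langle\bar\partial\psi,\gamma\rangle=\pm\langle\psi,\bar\partial\gamma\rangle=0$ since $\bar\partial\gamma=0$. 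As these pairings determine $L_{n-m}[\phi]$ and all of them vanish, we conclude $L_{n-m}\circ\bar\partial=0$ and hence $L_{n-m}[\phi]=0$.

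I expect the principal obstacle to be analytic rather than formal: controlling the Coleff--Herrera limit so that the residue, the integration by parts, and the singularities of the kernels are simultaneously compatible. The admissible-path condition \eqref{admissible} cannot be relaxed here, since the example of Passare and Tsikh \cite{PT} shows that the iterated residue genuinely depends on the relative rates at which the $\epsilon_k$ tend to zero. The delicate step is therefore to prove that the limits defining $I_q$ and $L_{n-m}$ exist, are independent of the chosen admissible path, and commute with $\bar\partial$ in the precise sense needed to close the homotopy identity.
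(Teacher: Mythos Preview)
Your overall architecture---ambient Koppelman formula on $\C\P^n$, then pass to the residue limit---matches the paper's strategy, and your treatment of (iii) via pairing with holomorphic test sections is essentially what the paper does (it uses the explicit forms $\gamma^r_z(\zeta)=\langle\bar z\cdot\zeta\rangle^r\det[\bar z\ Q(\zeta,z)\ d\bar z]\wedge\omega(\zeta)$, which are holomorphic in $\zeta$, so $\langle\bar\partial\psi,\gamma^r_z\rangle=0$).

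There is, however, a genuine gap in your argument for (ii). You propose to deduce $L_q=0$ for $q<n-m$ from the cohomological vanishing $H^q(V,{\cal O}_V)=0$ via the Koszul resolution. But this only tells you that $L_q[\phi]$ is \emph{exact}, not that it is \emph{zero}. To pass from ``$L_q[\phi]$ is exact'' to ``$L_q[\phi]=0$'' you would need an a priori reason why the image of $L_q$ meets the exact currents only in zero---the analogue of orthogonality of harmonic forms in classical Hodge theory. Nothing in your outline supplies that; ``identify the image of $L_q$ with residual representatives of $H^q$'' is precisely the step that needs proof, and it is not automatic for an arbitrary homotopy operator.

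The paper avoids this entirely by a direct dimension count on the kernel before any limit is taken. In the Weil--Leray formula the operator $L_q^{\epsilon}$ is a sum of integrals over $\Gamma^{\epsilon}_J\times\Delta'_J$; since $\dim\Gamma^{\epsilon}_J=2n+1-|J|$ and the integrand carries $q$ differentials $d\bar\zeta$ from $\Phi$ together with $n-|J|$ from the kernel, the only nonzero contributions occur when $|J|=n-q$. But $|J|\le m$, so $L_q^{\epsilon}\equiv 0$ for every $q<n-m$, with no cohomology needed. A minor related point: the functions $g_\alpha$ do not enter the kernels themselves; they are used only to localise the analytic estimates away from the singular set when proving existence and regularity of the limits.
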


{\bf Remark 1.} We interpret formula \eqref{HodgeHomotopy} as equality of currents,
which are principal values of the residues of Coleff and Herrera taken along admissible paths.
Precise definitions and explanations are given in the end of Section~\ref{Formulas}
and in Section~\ref{Proof}. Such interpretation with application to explicit solvability
of $\bar\partial$-equation on Stein reduced complete intersections in
pseudoconvex domains was introduced in \cite{HP2}, motivated by the works of Coleff,
Herrera, and Lieberman \cite{CH, HL}.
In \cite{AS1, AS2} such interpretation was used to obtain similar
solvability of $\bar\partial$-equation on reduced pure-dimensional Stein spaces.
In the present article we use this interpretation in the problem of constructing an explicit 
Hodge-type decomposition of $\bar\partial$-closed residual currents on reduced, compact,
complete intersection subvarieties in $\C\P^n$ with nontrivial $\bar\partial$-cohomologies
of highest degree. An important feature of the obtained decomposition is condition
\eqref{HodgeCondition}, which is similar to condition (ii) in the Hodge Theorem,
but with explicit integral operator $L_{n-m}$. Another important feature of decomposition
\eqref{HodgeHomotopy} is the real analyticity of the form $L_{n-m}[\phi]$
in some neighborhood of $V$ even for the case of singular reduced complete intersections.

{\bf Remark 2.} Works of Passare \cite{Pa1, Pa2}, and of Berenstein, Gay, and Yger \cite{BGY},
based on fundamental results of Atiyah \cite{At}, J. Bernstein, S. Gelfand \cite{BG},
and J. Bernstein \cite{Be} lead to the following simplified version of the original
Colleff-Herrera-Lieberman residue formula
$$\left\langle\Phi,\gamma\right\rangle
=\lim_{\begin{array}{ll}
\lambda_1,\dots,\lambda_m\to 0\\
\text{Re}\lambda_j>0,\ j=1,\dots,m
\end{array}}
\sum_{\alpha}\vartheta_{\alpha}\frac{\gamma_{\alpha}
\wedge\bar\partial\left|F_1^{(\alpha)}\right|^{2\lambda_1}\wedge\cdots
\wedge\bar\partial\left|F_m^{(\alpha)}\right|^{2\lambda_m}\wedge\Phi_{\alpha}}
{\prod_{j=1}^mF_j^{(\alpha)}}.$$

\indent
It was used in \cite{BGY} for the following division and interpolation problem:\\
\indent
{\it for given holomorphic functions $f_1, \dots, f_p$ and an arbitrary holomorphic
function $f$ on a Stein variety $V$ find an explicit representation
$$f = f_1g_1 +\cdots + f_pg_p + h$$
with $g_j$ being holomorphic on $V$, so that the remainder $h$ vanishes
on $V$ iff $f$ belongs to the ideal generated by $f_1, \dots, f_p$.}

{\bf Remark 3.} In the work in preparation we are planning to give two interrelated simple applications of Theorem~\ref{HodgeRepresentation}:
\begin{itemize}
\item[(i)]
construction of an explicit Hodge-type decomposition on complex curves in $\C\P^3$,\vspace{0.1in}
\item[(ii)]
construction of explicit Green's functions for solutions of inverse conductivity problem on bordered
surfaces in $\R^3$.
\end{itemize}

{\bf Remark 4.} In the future we plan to extend the result of
Theorem~\ref{HodgeRepresentation} to the case of locally complete intersections in $\C\P^n$
with $n\geq 3$, which might be considered as a natural level of generality for explicit formulas, as
implied by Hartshorne \cite{Ha}.

\section{Integral formulas on domains in projective spaces.}
\label{Formulas}

\indent
In this section we construct a Cauchy-Weil-Leray type integral formula for differential
forms on a domain $U$ in $\C\P^n$. We start with the Koppelman-type formula
from \cite{HP1} (Proposition 1.2) and \cite{He} (Theorem 3.2) going back to Moisil \cite{Mo},
Fueter \cite{F}, Bochner \cite{Bo}, Martinelli \cite{Ma}.
This formula is a modification for the case of $\C\P^n$ of the original Koppelman formula
announced by Koppelman in \cite{Kp} (1967). The first
complete proof of Koppelman's formula was given in the Polyakov's paper \cite{Po} (06.1970),
where it was used to obtain a Weil-type integral formula \cite{Wi} for differential forms
on analytic polyhedra, while in the papers of Lieb \cite{Li} (07.1970)
and \O vrelid \cite{O} (11.1970)
Koppelman's formula was used to obtain an integral formula of Leray-type \cite{Le}
for differential forms on strongly pseudoconvex domains. In the present article
we use formulas of both types: Weil-type formula for a tubular neighborhood
of a subvariety in $\C\P^n$ and Leray-type formula for the unit sphere
$\*S^{2n+1}(1)\subset \C^{n+1}$.\\
\indent
In \cite{HP1} we identified the forms on $\C\P^n$ with their lifts to $\*S^{2n+1}(1)$ satisfying
appropriate homogeneity conditions and constructed integral formulas for the lifted forms.
The proposition below is a reformulation of Proposition 1.2 from \cite{HP1}.

\begin{proposition}\label{Bochner} Let $\left\{P_k\right\}_1^m$ be homogeneous polynomials
defining the variety $V$ as in \eqref{Variety},
let $\epsilon=\left(\epsilon_1,\dots,\epsilon_m\right)$,
and let $\Phi^{(0,q)}$ be a form of homogeneity zero on the domain
\begin{equation}\label{Uepsilon}
U^{\epsilon}=\left\{z\in \*S^{2n+1}(1):\ \left|P_k(z)\right|<\epsilon_k\hspace{0.1in} \text{for}\
k=1,\dots,m\right\}.
\end{equation}
\indent
Then the following equality is satisfied for $z\in U^{\epsilon}$
\begin{equation}\label{MartinelliFormula}
\Phi^{(0,q)}(z)=\bar\partial_zJ_q^{\epsilon}\left[\Phi\right](z)
+J_{q+1}^{\epsilon}\left[\bar\partial\Phi\right](z)+K_q^{\epsilon}\left[\Phi\right](z),
\end{equation}
with
\begin{equation*}
J_q^{\epsilon}\left[\Psi\right](z)
=-\frac{n!}{(2\pi i)^{n+1}}\int_{U^{\epsilon}\times[0,1]}
\Psi(\zeta)\wedge\omega^{\prime}_{q-1}\left((1-\lambda)\frac{\bar z}{B^*(\zeta,z)}
+\lambda\frac{\bar\zeta}{B(\zeta,z)}\right)\wedge\omega(\zeta),
\end{equation*}
and
\begin{equation*}
K_q^{\epsilon}\left[\Psi\right](z)
=-\frac{n!}{(2\pi i)^{n+1}}\int_{bU^{\epsilon}\times[0,1]}
\Psi(\zeta)\wedge\omega^{\prime}_q\left((1-\lambda)\frac{\bar z}
{B^*(\zeta,z)}+\lambda\frac{\bar\zeta}{B(\zeta,z)}\right)\wedge\omega(\zeta),
\end{equation*}
where
$$B^*(\zeta,z)=\sum_{j=0}^n{\bar z}_j\cdot\left(\zeta_j-z_j\right),
\hspace{0.2in}B(\zeta,z)=\sum_{j=0}^n{\bar\zeta}_j\cdot\left(\zeta_j-z_j\right),$$
$$\omega(\zeta)=d\zeta_0\wedge d\zeta_1\wedge\cdots\wedge d\zeta_n,\hspace{0.2in}
\omega^{\prime}(\eta)=\sum_{k=0}^n(-1)^{k-1}\eta_k\bigwedge_{j\neq k}d\eta_j$$
and $\omega^{\prime}_q$ is the $(0,q)$-component with respect to $z$ of the form
$\omega^{\prime}$.
\end{proposition}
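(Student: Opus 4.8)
The plan is to obtain \eqref{MartinelliFormula} as a Koppelman-type homotopy formula produced by the Cauchy--Fantappi\`e--Leray calculus of double forms, the two building blocks being the Leray sections hidden in the denominators $B^{*}$ and $B$. The first observation is that both
\[
s^{0}(\zeta,z)=\frac{\bar z}{B^{*}(\zeta,z)},\qquad
s^{1}(\zeta,z)=\frac{\bar\zeta}{B(\zeta,z)}
\]
are normalized away from the diagonal, i.e.\ $\sum_{j}s^{0}_{j}(\zeta_{j}-z_{j})=\sum_{j}s^{1}_{j}(\zeta_{j}-z_{j})=1$ directly from the definitions of $B^{*}$ and $B$. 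Hence the convex combination $\eta(\zeta,z,\lambda)=(1-\lambda)s^{0}+\lambda s^{1}$ appearing in the kernels of $J^{\epsilon}_{q}$ and $K^{\epsilon}_{q}$ also satisfies $\sum_{j}\eta_{j}(\zeta_{j}-z_{j})=1$ for every $\lambda\in[0,1]$, so $\eta$ is a genuine homotopy of Leray sections over $\overline{U^{\epsilon}}\times[0,1]$, and $\omega'_{q-1}(\eta),\ \omega'_{q}(\eta)$ are well defined away from $\{\zeta=z\}$.

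Next I would compute the sum $\bar\partial_{z}J^{\epsilon}_{q}[\Phi]+J^{\epsilon}_{q+1}[\bar\partial\Phi]$ directly. Bringing $\bar\partial_{z}$ under the integral sign, integrating by parts in $\zeta$ to transfer $\bar\partial$ from $\Phi$ onto the kernel, and using that $\omega(\zeta)$ is $d$-closed, one reduces the sum to the $\zeta$-boundary integral over $bU^{\epsilon}\times[0,1]$, which is $-K^{\epsilon}_{q}[\Phi]$, plus the integral over $U^{\epsilon}\times[0,1]$ of $\Phi$ against the relevant bidegree part of $d\,\omega'(\eta)$. The Cauchy--Fantappi\`e identity says $d\,\omega'(\eta)=0$ wherever $\langle\eta,\zeta-z\rangle\equiv1$; sorting by the $(0,q)$-component in $z$, this reads
\[
\bar\partial_{z}\omega'_{q-1}(\eta)-\bar\partial_{\zeta}\omega'_{q}(\eta)=d\lambda\wedge\partial_{\lambda}\omega'_{q}(\eta)\qquad\text{off }\{\zeta=z\},
\]
with a correction concentrated on the diagonal when read as a current on all of $U^{\epsilon}\times[0,1]$. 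I would isolate that correction by excising a thin tube $\Gamma_{\delta}$ around $\{\zeta=z\}$ and applying Stokes; integrating the $\lambda$-derivative out by the fundamental theorem of calculus then reduces the smooth part to the difference of the two endpoint slices $U^{\epsilon}\times\{0,1\}$.

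The two $\lambda$-slices are killed by a bidegree obstruction rooted in the holomorphicity of the sections in complementary variables: $s^{1}=\bar\zeta/B$ is holomorphic in $z$, so $ds^{1}$ carries no $d\bar z$ and $\omega'_{q}(s^{1})=0$ for $q\ge1$, removing the slice at $\lambda=1$; dually $s^{0}=\bar z/B^{*}$ is holomorphic in $\zeta$, so the only $\zeta$-differentials it produces are holomorphic ones, which collide with $\omega(\zeta)$ and annihilate the slice at $\lambda=0$ for the relevant degrees. What survives is the residue along $b\Gamma_{\delta}$: as $\delta\to0$ it converges, thanks to the normalization $\langle\eta,\zeta-z\rangle\equiv1$, to the universal Cauchy--Fantappi\`e--Leray diagonal residue, reproducing exactly $\Phi^{(0,q)}(z)$ and fixing the constant $-n!/(2\pi i)^{n+1}$. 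Collecting the pieces and transposing $K^{\epsilon}_{q}[\Phi]$ yields \eqref{MartinelliFormula}.

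It remains to confirm that this is an identity for forms of homogeneity zero, hence a formula descending to $\C\P^{n}$. Using the relation $B^{*}=-\overline{B}$ valid on the sphere, one checks that $B$ and $B^{*}$ vanish exactly on the diagonal $\{\zeta=z\}$ and that the weights carried by $B$, $B^{*}$, $\omega(\zeta)$ and $\omega'_{q}(\eta)$ are complementary, so the full integrand is invariant under the scaling defining the lift from $\C\P^{n}$ to $\mathbb{S}^{2n+1}(1)$ and is compatible with the constraint cutting out $U^{\epsilon}$ in \eqref{Uepsilon}. I expect the genuine obstacle to be this diagonal analysis: simultaneously justifying Stokes for the singular homotopy kernel and matching the limiting residue to the correct multiple of $\Phi^{(0,q)}(z)$, while keeping the homogeneity weights under control so that passage to $\C\P^{n}$ produces no spurious term. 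Since the statement is only a reformulation of Proposition~1.2 of \cite{HP1}, the most economical route is to quote the integral identity proved there and then carry out this homogeneity reduction, where the real work resides.
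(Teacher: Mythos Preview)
The paper does not actually prove this proposition: it is stated as a direct reformulation of Proposition~1.2 of \cite{HP1} (and Theorem~3.2 of \cite{He}), with no argument given beyond that citation. Your plan is the standard Koppelman derivation via the Cauchy--Fantappi\`e--Leray machinery---homotopy of Leray sections, the closedness identity for $\omega'(\eta)\wedge\omega(\zeta)$ (which the paper records later as \eqref{domega}), Stokes with a diagonal excision, and bidegree killing of the endpoint slices---and this is exactly the argument carried out in \cite{HP1}. You correctly recognize in your last paragraph that quoting \cite{HP1} and checking the homogeneity is the economical route; that is precisely what the paper does, so your proposal and the paper's treatment are aligned.
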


\indent
We will transform the right-hand side of equality \eqref{MartinelliFormula} into a Cauchy-Weil-Leray
type formula. For this transformation we need the following Weil-type lemma.

\begin{lemma}\label{WeilCoefficients} Let $P(\zeta)$ be a homogeneous polynomial of variables
$\zeta_0,\dots,\zeta_n$ of degree $d$. Then there exist polynomials
$\left\{Q^i(\zeta,z)\right\}_{i=0}^n$ satisfying:
\begin{equation}\label{HomogeneityConditions}
\left\{\begin{array}{ll}
P(\zeta)-P(z)=\sum_{i=0}^nQ^i(\zeta,z)\cdot\left(\zeta_i-z_i\right),\vspace{0.1in}\\
Q^i(\lambda\zeta,\lambda z)=\lambda^{d-1}\cdot Q^i(\zeta,z)\ \mbox{for}\ \lambda\in\C.
\end{array}\right.
\end{equation}
\end{lemma}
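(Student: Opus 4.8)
The plan is to prove Lemma~\ref{WeilCoefficients} by explicitly constructing the polynomials $Q^i(\zeta,z)$ through a telescoping (Hefer-type) expansion, interpolating between the value $P(z)$ and $P(\zeta)$ one coordinate at a time. First I would introduce the intermediate points $w^{(i)} = (\zeta_0,\dots,\zeta_{i-1}, z_i, z_{i+1},\dots,z_n)$ for $i=0,\dots,n+1$, so that $w^{(0)}=z$ and $w^{(n+1)}=\zeta$. Then the difference decomposes as a telescoping sum
\begin{equation*}
P(\zeta)-P(z)=\sum_{i=0}^n\left(P(w^{(i+1)})-P(w^{(i)})\right),
\end{equation*}
where consecutive terms $w^{(i)}$ and $w^{(i+1)}$ differ only in the $i$-th coordinate ($z_i$ versus $\zeta_i$). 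Since $P(w^{(i+1)})-P(w^{(i)})$ vanishes when $\zeta_i=z_i$, the factor theorem in the single variable $\zeta_i$ guarantees that $(\zeta_i-z_i)$ divides this difference, yielding a polynomial
\begin{equation*}
Q^i(\zeta,z)=\frac{P(w^{(i+1)})-P(w^{(i)})}{\zeta_i-z_i},
\end{equation*}
which is genuinely a polynomial (the division leaves no remainder) and satisfies the first requirement in \eqref{HomogeneityConditions} by summation.

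The second requirement, the homogeneity $Q^i(\lambda\zeta,\lambda z)=\lambda^{d-1}Q^i(\zeta,z)$, follows from tracking degrees. Because $P$ is homogeneous of degree $d$, each $P(w^{(i)})$ is homogeneous of degree $d$ under the simultaneous scaling $(\zeta,z)\mapsto(\lambda\zeta,\lambda z)$, since $w^{(i)}$ scales to $\lambda w^{(i)}$. Hence the numerator $P(w^{(i+1)})-P(w^{(i)})$ is homogeneous of degree $d$, and dividing by $\zeta_i-z_i$, which is homogeneous of degree $1$, produces a form of homogeneity $d-1$, exactly as claimed.

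The main obstacle, though a mild one, is verifying rigorously that the quotient defining $Q^i$ is a polynomial rather than merely a rational function. I would handle this by viewing $P(w^{(i+1)})-P(w^{(i)})$ as a polynomial in $\zeta_i$ with coefficients that are polynomials in the remaining variables, observing that it vanishes identically when $\zeta_i$ is set equal to $z_i$, and invoking the fact that over the polynomial ring in the other variables the linear factor $(\zeta_i - z_i)$ then divides it. One should also note that where the naive quotient formula is formally indeterminate ($\zeta_i=z_i$), the polynomial $Q^i$ is still well defined by continuity of the division, so no genuine singularity arises. The preservation of homogeneity under this division is the only point requiring care, but the degree bookkeeping above settles it cleanly.
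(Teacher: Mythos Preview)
Your argument is correct. Both your proof and the paper's rest on the same Hefer-type telescoping idea of changing one coordinate at a time, but the packaging differs. The paper first reduces to monomials and then inducts on the number of variables, peeling off $\zeta_0$ via the one-variable identity $\zeta^d-z^d=(\zeta-z)\sum_{j}\zeta^{d-1-j}z^{j}$ and handling the remaining variables by the inductive hypothesis. You instead telescope $P(\zeta)-P(z)$ directly along the intermediate points $w^{(i)}$ for the full polynomial in one pass, with no monomial decomposition and no induction. Your route is a bit more streamlined and makes the homogeneity check immediate from the scaling of $w^{(i)}$; the paper's route produces slightly more explicit formulas on each monomial (and in fact a different, though equally valid, choice of the non-unique $Q^i$, since its telescoping runs in the opposite order from yours).
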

\begin{proof} We notice that it suffices to prove the lemma for homogeneous monomials.
We prove the lemma for homogeneous monomials by induction with respect to the number
of variables. Using the one-variable equality
\begin{equation}\label{OneVariableEquality}
\zeta^d-z^d=\left(\zeta-z\right)\cdot\left(\sum_{j=0}^{d-1}\zeta^{d-1-j}\cdot z^{j}\right)
\end{equation}
we obtain the statement of the Lemma for an arbitrary monomial depending only on one variable.\\
\indent
To prove the step of induction we consider a monomial $\zeta_0^{d_0}\cdots\zeta_k^{d_k}$ with
$k\geq 1$ and $\sum_{j=0}^k d_j=d$. Then we obtain the following equality
$$\zeta_0^{d_0}\cdots\zeta_k^{d_k}-z_0^{d_0}\cdots z_k^{d_k}
=\left(\zeta_0^{d_0}-z_0^{d_0}\right)\cdot\zeta_1^{d_1}\cdots\zeta_k^{d_k}
+z_0^{d_0}\cdot\left(\zeta_1^{d_1}\cdots\zeta_k^{d_k}
-z_1^{d_1}\cdots z_k^{d_k}\right).$$
\indent
Using equality \eqref{OneVariableEquality} for the first term of the right-hand side
of equality above we obtain
$$Q^0(\zeta,z)=\left(\sum_{j=0}^{d_0-1}\zeta_0^{d_0-1-j}\cdot z_0^{j}\right)
\cdot\zeta_1^{d_1}\cdots\zeta_k^{d_k}.$$
Using then the inductive assumption for the polynomial
$$\zeta_1^{d_1}\cdots\zeta_k^{d_k}-z_1^{d_1}\cdots z_k^{d_k}$$
we obtain the existence of polynomials $\left\{q^i(\zeta,z)\right\}_{i=1}^n$ satisfying
conditions \eqref{HomogeneityConditions}. Therefore, defining for $i=1,\dots,n$
$$Q^i(\zeta,z)=z_0^{d_0}\cdot q^i(\zeta,z)$$
we obtain the necessary coefficients for a monomial in $k+1$ variables.
\end{proof}

\indent
The integrals in the sought formula will be taken over a special chain
\begin{equation}\label{C_Chain}
{\cal C}^{\epsilon}=\sum_{|J|\geq 1}\Gamma^{\epsilon}_J\times\Delta_J,
\end{equation}
where
$J=\left(j_1,\dots,j_p\right)$ is a multiindex with $|J|=p\leq m$,
$$\Gamma^{\epsilon}_J=\left\{\zeta\in \*S^{2n+1}(1):\ |P_j(\zeta)|=\epsilon_j\
\mbox{for}\ j\in J,\ |P_k(\zeta)|<\epsilon_k\ \mbox{for}\ k\notin J\right\},$$
$$\Delta_J=\left\{\lambda,\mu_{j_1},\dots,\mu_{j_p}\in\R^{p+1}:\
\lambda+\sum_{i=1}^p\mu_{j_i}\leq 1\right\}.$$
\indent
The boundary of chain ${\cal C}^{\epsilon}$ is the chain
\begin{equation*}
{\cal B}^{\epsilon}=-\sum_{j=1}^m\Gamma^{\epsilon}_j\times\Lambda
+\sum_{|J|\geq 1}\left((-1)^{|J|-1}\Gamma^{\epsilon}_J\times\Delta^{\prime}_J
+\Gamma^{\epsilon}_J\times\Lambda_J\right),
\end{equation*}
where
$$\Lambda=[0,1],$$
$$\Delta^{\prime}_J=\left\{\mu_{j_1},\dots,\mu_{j_p}\in\R^p:\
\sum_{i=1}^p\mu_{j_i}\leq 1\right\},$$
$$\Lambda_J=\left\{\lambda,\mu_{j_1},\dots,\mu_{j_p}\in\R^{p+1}:\
\lambda+\sum_{i=1}^p\mu_{j_i}=1\right\}.$$
\indent
In the following proposition we construct a Cauchy-Weil-Leray type formula on
$\epsilon$-neighborhoods of complete intersection subvarieties in $\C\P^n$.

\begin{proposition}\label{Formula} Let
$$V=\left\{z\in \C\P^n:\ P_1(z)=\cdots =P_m(z)=0\right\}$$
be a complete intersection subvariety in $\C\P^n$ of dimension $n-m$, and
let $\Phi^{(0,q)}$ be a differential form on an open neighborhood $U\supset V$.\\
\indent
Then for $U^{\epsilon}$ as in \eqref{Uepsilon} and arbitrary $z\in U^{\epsilon}$
the following equality holds
\begin{equation}\label{WeilLerayFormula}
\Phi(z)=\bar\partial_z I_q^{\epsilon}\left[\Phi\right](z)
+I_{q+1}^{\epsilon}\left[\bar\partial\Phi\right](z)+L_q^{\epsilon}\left[\Phi\right](z),
\end{equation}
where
\begin{multline}\label{IOperator}
I_q^{\epsilon}\left[\Phi\right](z)=-\frac{n!}{(2\pi i)^{n+1}}\int_{U^{\epsilon}\times[0,1]}
\Phi(\zeta)\wedge\omega^{\prime}_{q-1}\left((1-\lambda)\frac{\bar z}{B^*(\zeta,z)}
+\lambda\frac{\bar\zeta}{B(\zeta,z)}\right)\wedge\omega(\zeta)\\
+\frac{n!}{(2\pi i)^{n+1}}\sum_{|J|\geq 1}\int_{\Gamma^{\epsilon}_J\times\Delta_J}
\Phi(\zeta)\wedge 
\omega^{\prime}_{q-1}\left((1-\lambda-\sum_{k=1}^m\mu_k)
\frac{\bar z}{B^*(\zeta,z)}\right.\\
\left.+\lambda\frac{\bar\zeta}{B(\zeta,z)}
+\sum_{k=1}^m\mu_k\frac{Q_k(\zeta,z)}{P_k(\zeta)-P_k(z)}\right)
\wedge\omega(\zeta),
\end{multline}
and
\begin{multline}\label{LOperator}
L_q^{\epsilon}\left[\Phi\right](z)
=\sum_{|J|=n-q}(-1)^{|J|-1}\frac{n!}{(2\pi i)^{n+1}}
\int_{\Gamma^{\epsilon}_J\times\Delta^{\prime}_J}
\Phi(\zeta)\wedge\omega^{\prime}_q\left((1-\sum_{k=1}^m\mu_k)
\frac{\bar z}{B^*(\zeta,z)}\right.\\
\left.+\sum_{k=1}^m\mu_k\frac{Q_k(\zeta,z)}{P_k(\zeta)-P_k(z)}\right)
\wedge\omega(\zeta),
\end{multline}
with coefficients $\left\{Q_k^i\right\}_{k=1,\dots,m}^{i=0,\dots,n}$ satisfying conditions
\eqref{HomogeneityConditions} from Lemma~\ref{WeilCoefficients}.\\
\indent
The forms defined by \eqref{IOperator} and \eqref{LOperator} on $U^{\epsilon}$
admit the descent onto a neighborhood of $V$ in $\C\P^n$.
\end{proposition}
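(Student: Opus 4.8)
The plan is to derive formula \eqref{WeilLerayFormula} from the Koppelman-type formula \eqref{MartinelliFormula} of Proposition~\ref{Bochner} by a homotopy argument on the chain $\mathcal{C}^{\epsilon}$. First I would observe that the kernel appearing in \eqref{MartinelliFormula} is built from the sections $s_0=\bar z/B^*(\zeta,z)$ and $s_1=\bar\zeta/B(\zeta,z)$, each of which satisfies $\langle s,\zeta-z\rangle=1$ on the relevant set. The key point supplied by Lemma~\ref{WeilCoefficients} is that for each defining polynomial $P_k$ we obtain, via the homogeneous coefficients $Q_k^i$, an additional section
\begin{equation*}
s_{P_k}=\frac{Q_k(\zeta,z)}{P_k(\zeta)-P_k(z)},
\end{equation*}
which also satisfies $\langle s_{P_k},\zeta-z\rangle=1$ by the first identity in \eqref{HomogeneityConditions}, and whose homogeneity is matched to that of the forms by the second identity. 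On the boundary piece $\Gamma^{\epsilon}_J$ one has $|P_k(\zeta)|=\epsilon_k$ while $z\in U^{\epsilon}$ forces $|P_k(z)|<\epsilon_k$, so $P_k(\zeta)-P_k(z)\neq 0$ and the section $s_{P_k}$ is well defined there; this is precisely what allows us to insert these sections on the faces indexed by $J$.

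The next step is the homotopy-formula bookkeeping. I would form the convex combination
\begin{equation*}
s(\lambda,\mu)=\Bigl(1-\lambda-\sum_k\mu_k\Bigr)s_0+\lambda s_1+\sum_k\mu_k s_{P_k}
\end{equation*}
parametrized over the simplices $\Delta_J$, and integrate the Cauchy–Fantappiè–Leray form $\omega'_{q-1}(s)\wedge\omega(\zeta)$ over the chain $\mathcal{C}^{\epsilon}=\sum_{|J|\ge1}\Gamma^{\epsilon}_J\times\Delta_J$. Applying Stokes' theorem and using that $\omega'$ is closed in its argument wherever the $\langle s,\zeta-z\rangle=1$ normalization holds, the exterior derivative of this integral splits, according to the description of $\partial\mathcal{C}^{\epsilon}=\mathcal{B}^{\epsilon}$ given before the proposition, into three groups of terms: the faces $\Gamma^{\epsilon}_j\times\Lambda$ which reassemble the original boundary operator $K_q^{\epsilon}$ of \eqref{MartinelliFormula} and cancel it, the interior faces $\Delta'_J$ (where $\lambda=0$) which produce the operator $L_q^{\epsilon}$ of \eqref{LOperator}, and the faces $\Lambda_J$ (where $\lambda+\sum\mu_k=1$, i.e. the $s_0$-coefficient vanishes) together with the $\bar\partial_z$ and $\bar\partial$ terms which assemble into $\bar\partial_z I_q^{\epsilon}+I_{q+1}^{\epsilon}\bar\partial$. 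Matching the degree condition $|J|=n-q$ in \eqref{LOperator} against the simplex dimensions, and tracking the signs $(-1)^{|J|-1}$ from the boundary formula for $\mathcal{B}^{\epsilon}$, yields exactly \eqref{WeilLerayFormula}.

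For the final assertion — that the forms \eqref{IOperator} and \eqref{LOperator} descend to a neighborhood of $V$ in $\C\P^n$ — I would verify the two homogeneity invariances. Under $\zeta\mapsto\tau\zeta$ and $z\mapsto\tau z$ the factors $B^*$, $B$, and $P_k(\zeta)-P_k(z)$ scale with definite weights, and the second condition in \eqref{HomogeneityConditions} guarantees that $s_{P_k}$ scales with the same weight as $s_0$ and $s_1$; combined with the homogeneity zero of $\Phi$ and the scaling of $\omega(\zeta)$ and $\omega'$, the integrands are invariant under the diagonal $\C^*$-action, so the integrals define forms on the image of $U^{\epsilon}$ in $\C\P^n$. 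The main obstacle I anticipate is not the algebra of the Stokes computation but the careful verification that each section remains nonsingular on the support of integration and that the orientations and signs on the faces of $\mathcal{C}^{\epsilon}$ are correctly accounted for; in particular one must confirm that the term carrying the $s_0$-coefficient $(1-\lambda-\sum\mu_k)$ does not blow up on $V$ itself, which is exactly where $B^*(\zeta,z)$ could degenerate, and this is controlled by working in the tubular region $U^{\epsilon}$ away from the diagonal singularity rather than on $V$ directly.
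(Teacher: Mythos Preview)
Your approach is essentially the paper's: apply Stokes' theorem to the Cauchy--Fantappi\`e--Leray form built from the convex combination $s(\lambda,\mu)$ over the chain $\mathcal{C}^{\epsilon}$, identify the boundary pieces via the description of $\mathcal{B}^{\epsilon}$, reduce $L_q^{\epsilon}$ by the dimension count $|J|=n-q$, and invoke homogeneity for the descent.

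There is one point that needs correction. You write that the $\Lambda_J$ faces ``together with the $\bar\partial_z$ and $\bar\partial$ terms \ldots assemble into $\bar\partial_z I_q^{\epsilon}+I_{q+1}^{\epsilon}\bar\partial$.'' In fact the $\Lambda_J$ contributions vanish on their own and play no role in building the $I$-operators. On $\Lambda_J$ one has $\lambda+\sum_k\mu_k=1$, so the coefficient of $s_0=\bar z/B^*$ drops out and the remaining section $(1-\sum_k\mu_k)\,\bar\zeta/B(\zeta,z)+\sum_k\mu_k\,Q_k(\zeta,z)/(P_k(\zeta)-P_k(z))$ is holomorphic in $z$; hence its $(0,q)$-component in $z$, namely $\omega'_q$ of it, is identically zero for $q\geq 1$. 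The $\bar\partial_z I_q^{\epsilon}$ and $I_{q+1}^{\epsilon}[\bar\partial\Phi]$ terms arise separately from the interior $d_{\zeta,\lambda,\mu}$ contribution in Stokes, via the identity \eqref{domega}, not from the $\Lambda_J$ faces. Once you make this adjustment your argument coincides with the paper's proof.
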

\begin{proof} Applying the Stokes' formula to the form
\begin{equation*}
\Phi(\zeta)\wedge\omega^{\prime}_q\left((1-\lambda-\sum_{k=1}^m\mu_k)
\frac{\bar z}{B^*(\zeta,z)}+\lambda\frac{\bar\zeta}{B(\zeta,z)}
+\sum_{k=1}^m\mu_k\frac{Q_k(\zeta,z)}{P_k(\zeta)-P_k(z)}\right)\wedge\omega(\zeta)
\end{equation*}
we obtain equality
\begin{multline*}
\int_{bU^{\epsilon}\times[0,1]}\Phi(\zeta)
\wedge\omega^{\prime}_q\left((1-\lambda)\frac{\bar z}
{B^*(\zeta,z)}+\lambda\frac{\bar\zeta}{B(\zeta,z)}\right)\wedge\omega(\zeta)\\
=\sum_{j=1}^m\int_{\Gamma^{\epsilon}_j\times\Lambda}\Phi(\zeta)
\wedge\omega^{\prime}_q\left((1-\lambda)\frac{\bar z}
{B^*(\zeta,z)}+\lambda\frac{\bar\zeta}{B(\zeta,z)}\right)\wedge\omega(\zeta)\\
=\sum_{|J|\geq 1}(-1)^{|J|-1}\int_{\Gamma^{\epsilon}_J\times\Delta^{\prime}_J}
\Phi(\zeta)\wedge\omega^{\prime}_q\left((1-\sum_{k=1}^m\mu_k)\frac{\bar z}{B^*(\zeta,z)}
+\sum_{k=1}^m\mu_k\frac{Q_k(\zeta,z)}{P_k(\zeta)-P_k(z)}\right)\wedge\omega(\zeta)\\
+\sum_{|J|\geq 1}\int_{\Gamma^{\epsilon}_J\times\Lambda_J}
\Phi(\zeta)\wedge\omega^{\prime}_q\left((1-\sum_{k=1}^m\mu_k)\frac{\bar\zeta}
{B(\zeta,z)}+\sum_{k=1}^m\mu_k\frac{Q_k(\zeta,z)}{P_k(\zeta)-P_k(z)}\right)
\wedge\omega(\zeta)\\
-\sum_{|J|\geq 1}\int_{\Gamma^{\epsilon}_J\times\Delta_J}
\bar\partial\Phi(\zeta)
\wedge\omega^{\prime}_q\left((1-\lambda-\sum_{k=1}^m\mu_k)\frac{\bar z}{B^*(\zeta,z)}
+\lambda\frac{\bar\zeta}{B(\zeta,z)}+\sum_{k=1}^m\mu_k\frac{Q_k(\zeta,z)}{P_k(\zeta)-P_k(z)}\right)
\wedge\omega(\zeta)\\
+(-1)^{q+1}\sum_{|J|\geq 1}\int_{\Gamma^{\epsilon}_J\times\Delta_J}
\Phi(\zeta)\wedge d_{\zeta,\lambda,\mu}
\omega^{\prime}_q\left((1-\lambda-\sum_{k=1}^m\mu_k)\frac{\bar z}{B^*(\zeta,z)}\right.\\
+\left.\lambda\frac{\bar\zeta}{B(\zeta,z)}+\sum_{k=1}^m\mu_k\frac{Q_k(\zeta,z)}{P_k(\zeta)-P_k(z)}\right)
\wedge\omega(\zeta).
\end{multline*}
\indent
Then using equality
\begin {equation}\label{domega}
d_{\zeta,\lambda,\mu}\omega^{\prime}_{r}(\eta)\wedge\omega(\zeta)
+ \bar\partial_z \omega^{\prime}_{r-1}(\eta)\wedge\omega(\zeta) = 0
\hspace{0.3in} (r=1,\dots,n)
\end{equation}
for
$$\left\{\eta_j=(1-\lambda-\sum_{k=1}^m\mu_k)\frac{\bar z_j}
{B^*(\zeta,z)}+\lambda\frac{\bar\zeta_j}{B(\zeta,z)}
+\sum_{k=1}^m\mu_k\frac{Q_k(\zeta,z)}{P_k(\zeta)-P_k(z)}\right\}_{j=0}^n$$
we transform the equality above for $n\geq 2$ into
\begin{multline}\label{BoundaryIntegral}
\int_{bU^{\epsilon}\times[0,1]}\Phi(\zeta)
\wedge\omega^{\prime}_{q-1}\left((1-\lambda)\frac{\bar z}
{B^*(\zeta,z)}+\lambda\frac{\bar\zeta}{B(\zeta,z)}\right)\wedge\omega(\zeta)\\
=\sum_{|J|\geq 1}(-1)^{|J|-1}
\int_{\Gamma^{\epsilon}_J\times\Delta^{\prime}_J}\Phi(\zeta)
\wedge\omega^{\prime}_q\left((1-\sum_{k=1}^m\mu_k)\frac{\bar z}{B^*(\zeta,z)}
+\sum_{k=1}^m\mu_k\frac{Q_k(\zeta,z)}{P_k(\zeta)-P_k(z)}\right)\wedge\omega(\zeta)\\
+\sum_{|J|\geq 1}\int_{\Gamma^{\epsilon}_J\times\Lambda_J}\Phi(\zeta)
\wedge\omega^{\prime}_q\left((1-\sum_{k=1}^m\mu_k)\frac{\bar\zeta}
{B(\zeta,z)}+\sum_{k=1}^m\mu_k\frac{Q_k(\zeta,z)}{P_k(\zeta)-P_k(z)}\right)
\wedge\omega(\zeta)\\
-\sum_{|J|\geq 1}\int_{\Gamma^{\epsilon}_J\times\Delta_J}
\bar\partial\Phi(\zeta)
\wedge\omega^{\prime}_q\left((1-\lambda-\sum_{k=1}^m\mu_k)\frac{\bar z}{B^*(\zeta,z)}
+\lambda\frac{\bar\zeta}{B(\zeta,z)}+\sum_{k=1}^m\mu_k\frac{Q_k(\zeta,z)}{P_k(\zeta)-P_k(z)}\right)
\wedge\omega(\zeta)\\
+\sum_{|J|\geq 1}\int_{\Gamma^{\epsilon}_J\times\Delta_J}
\Phi(\zeta)\wedge 
\bar\partial_z\omega^{\prime}_{q-1}\left((1-\lambda-\sum_{k=1}^m\mu_k)
\frac{\bar z}{B^*(\zeta,z)}\right.\\
\left.+\lambda\frac{\bar\zeta}{B(\zeta,z)}
+\sum_{k=1}^m\mu_k\frac{Q_k(\zeta,z)}{P_k(\zeta)-P_k(z)}\right)
\wedge\omega(\zeta),
\end{multline}
and finally obtain from \eqref{MartinelliFormula} equality \eqref{WeilLerayFormula} with
\begin{multline*}
L_q^{\epsilon}\left[\Phi\right](z)=\sum_{|J|\geq 1}(-1)^{|J|-1}\frac{n!}{(2\pi i)^{n+1}}
\int_{\Gamma^{\epsilon}_J\times\Delta^{\prime}_J}\Phi(\zeta)
\wedge\omega^{\prime}_q\left((1-\sum_{k=1}^m\mu_k)\frac{\bar z}{B^*(\zeta,z)}\right.\\
\left.+\sum_{k=1}^m\mu_k\frac{Q_k(\zeta,z)}{P_k(\zeta)-P_k(z)}\right)\wedge\omega(\zeta)\\
+\sum_{|J|\geq 1}\frac{n!}{(2\pi i)^{n+1}}\int_{\Gamma^{\epsilon}_J\times\Lambda_J}\Phi(\zeta)
\wedge\omega^{\prime}_q\left((1-\sum_{k=1}^m\mu_k)\frac{\bar\zeta}{B(\zeta,z)}
+\sum_{k=1}^m\mu_k\frac{Q_k(\zeta,z)}{P_k(\zeta)-P_k(z)}\right)\wedge\omega(\zeta).
\end{multline*}
\indent
Then we notice that because of the holomorphic dependence on $z$ we have for $q\geq 1$ the equality
$$\omega^{\prime}_q\left((1-\sum_{k=1}^m\mu_k)\frac{\bar\zeta}
{B(\zeta,z)}+\sum_{k=1}^m\mu_k\frac{Q_k(\zeta,z)}{P_k(\zeta)-P_k(z)}\right)=0.$$
\indent
Since the dimension of $\Gamma^{\epsilon}_J$ is equal to $2n+1-|J|$ and the form $\Phi$ has
$q$ differentials of the form $d{\bar\zeta}$, we conclude that the only terms in the first
sum of the right-hand side of the formula for ${\dis L_q^{\epsilon}\left[\Phi\right] }$
that have a nonzero contribution are the terms with
$$|J|=n-q.$$
\indent
From the last two observations we obtain formula \eqref{LOperator} for
${\dis L_q^{\epsilon}\left[\Phi\right] }$.\\
\indent
The fact that the forms ${\dis L_q^{\epsilon}\left[\Phi\right] }$
and ${\dis I_q^{\epsilon}\left[\Phi\right] }$ have homogeneity zero, as the form $\Phi$,
follows from the homogeneity properties of the functions $B(\zeta,z)$ and
$B^*(\zeta,z)$ and from the homogeneity property \eqref{HomogeneityConditions} of the coefficients $Q_s^i(\zeta,z)$.
\end{proof}
\indent
We interpret formula \eqref{WeilLerayFormula} as a formula for residual currents
\begin{equation}\label{CurrentFormula}
\left\langle\phi,\gamma\right\rangle=\pm\left\langle
I_q^{\epsilon}\left[\phi\right],\bar\partial\gamma\right\rangle
+\left\langle I_{q+1}^{\epsilon}\left[\bar\partial\phi\right],\gamma\right\rangle
+\left\langle L_q^{\epsilon}\left[\phi\right],\gamma\right\rangle,
\end{equation}
where all terms in the right-hand side are understood as residual currents, i.e. for example
for an arbitrary $\gamma\in {\cal E}_c^{(n,n-m-q)}\left(V,{\cal L}\right)$ with support in
$U_{\alpha}$ we mean
\begin{equation}\label{preL-Operator}
\left\langle L_q^{\epsilon}\left[\phi\right],\gamma\right\rangle
=\lim_{\tau\to 0}\int_{T^{\delta(\tau)}_{\alpha}}
\gamma(z)\wedge\frac{L_q^{\epsilon}\left[\phi\right](z)}{\prod_{k=1}^m F^{(\alpha)}_k(z)},
\end{equation}
where we denote by $L_q^{\epsilon}\left[\phi\right](z)$ the descent of this form
onto $\C\P^n$.\\
\indent
Formula \eqref{CurrentFormula} is a preliminary form of the Hodge-type decomposition
formula for $\bar\partial$-closed residual currents on $V$. In what follows we will consider the limits of
the terms in the right-hand side of \eqref{WeilLerayFormula} as $\epsilon\to 0$, and interpret
the limit of operator $I_q^{\epsilon}\left[\Phi\right]$ as a solution operator on $V$ and the limit of
$L_q^{\epsilon}\left[\Phi\right]$ as a Hodge-type projection operator.

\section{Hodge-type projection.}
\label{LTransformation}

\indent
In this section we transform formula \eqref{LOperator} into a residual form by considering
the limit of $L_q^{\epsilon}$ as $\epsilon\to 0$.
We perform this transformation in several steps. First we observe that the only nonzero terms in this
formula are those that have $q=n-|J|$. But for subvariety $V$ we have $|J|\leq m$,
and therefore operator $L_q^{\epsilon}$ contains nonzero integrals only for $q\geq n-m$.
On the other hand, since we are considering only the cohomologies of degree less
or equal to $n-m$, where $n-m$ is the dimension of $V$, in formula \eqref{LOperator}
we have the exact equalities $q=n-m$, $|J|=m$, and therefore $J=(1,\dots,m)$.\\
\indent
First we transform formula \eqref{LOperator} for
${\dis L_q^{\epsilon}\left[\Phi\right](z) }$ with $z\in U^{\epsilon}$
by integrating with respect to variables $\mu_k \in \Delta^{\prime}_J$, and obtain
\begin{multline}\label{LDeterminant}
L_q^{\epsilon}\left[\Phi\right](z)\\
=(-1)^{n-q-1}\frac{n!}{(2\pi i)^{n+1}}\int_{\Gamma^{\epsilon}_J\times\Delta^{\prime}_J}
\Phi(\zeta)\wedge\omega^{\prime}_q\left((1-\mu)\frac{\bar z}
{B^*(\zeta,z)}+\sum_{k=1}^m\mu_k\frac{Q_k(\zeta,z)}
{P_k(\zeta)-P_k(z)}\right)\wedge\omega(\zeta)\\
=C(n,m,d)\int_{\Gamma^{\epsilon}_J}\Phi(\zeta)
\wedge\det\left[\frac{\bar z}{B^*(\zeta,z)}\
\overbrace{\frac{Q_k(\zeta,z)}{P_k(\zeta)-P_k(z)}}^m\
\overbrace{\frac{d{\bar z}}{B^*(\zeta,z)}}^{q=n-m}\right]
\wedge\omega(\zeta).
\end{multline}
\indent
Then, using expression
$$B^*(\zeta,z)=\sum_{j=0}^n{\bar z}_j\cdot\left(\zeta_j-z_j\right)
=-1+\sum_{j=0}^n{\bar z}_j\cdot\zeta_j$$
and its corollary
$$\left(B^*(\zeta,z)\right)^{-q-1}=(-1)^{q+1}
\left(1-\sum_{j=0}^n{\bar z}_j\cdot\zeta_j\right)^{-q-1}$$
in the integral from the right-hand side of \eqref{LDeterminant} we obtain
\begin{multline*}
\int_{\Gamma^{\epsilon}_J}\Phi(\zeta)
\wedge\det\left[\frac{\bar z}{B^*(\zeta,z)}\
\overbrace{\frac{Q_k(\zeta,z)}{P_k(\zeta)-P_k(z)}}^m\
\overbrace{\frac{d{\bar z}}{B^*(\zeta,z)}}^{q}\right]
\wedge\omega(\zeta)\\
=(-1)^{q+1}\lim_{\stackrel{\eta\to 1}{\eta<1}}
\int_{\left\{|\zeta|=1,\left\{|P_k(\zeta)|=\epsilon_k\right\}_{k=1}^m\right\}}
\left(1-\eta\sum_{j=0}^n{\bar z}_j\cdot\zeta_j\right)^{-q-1}\\
\times\Phi(\zeta)\wedge\det\left[{\bar z}\overbrace{\frac{Q_k(\zeta,z)}
{P_k(\zeta)-P_k(z)}}^m\
\overbrace{d{\bar z}}^{q}\right]\wedge\omega(\zeta)
\end{multline*}
\begin{equation}\label{L-Sum-of-Determinants}
=(-1)^{q+1}\lim_{\stackrel{\eta\to 1}{\eta<1}}\sum_{r=0}^{\infty}
c_r\eta^r\cdot\int_{\left\{|\zeta|=1,\left\{|P_k(\zeta)|=\epsilon_k\right\}_{k=1}^m\right\}}
\langle{\bar z}\cdot\zeta\rangle^r\\
\Phi(\zeta)\wedge\det\left[{\bar z}\ \overbrace{\frac{Q_k(\zeta,z)}{P_k(\zeta)-P_k(z)}}^m\
\overbrace{d{\bar z}}^{q}\right]\wedge\omega(\zeta),
\end{equation}
where we denoted $\langle{\bar z}\cdot\zeta\rangle=\sum_{j=0}^n{\bar z}_j\zeta_j$.

\indent
For $\zeta,z\in \*S^{2n+1}(1)$ such that $\left\{|P_k(\zeta)|=\epsilon_k\right\}_{k=1}^m$
and $\left\{|P_k(z)|<\epsilon_k\right\}_{k=1}^m$ we use in the differential form
$$\det\left[{\bar z}\ \overbrace{\frac{Q_k(\zeta,z)}{P_k(\zeta)-P_k(z)}}^{m}\
\overbrace{d{\bar z}}^{q}\right]$$
the following representation with absolutely converging series
\begin{equation}\label{QSeries}
\frac{Q^s_k(\zeta,z)}{P_k(\zeta)-P_k(z)}=\frac{Q^s_k(\zeta,z)}{P_k(\zeta)}
\cdot\left(1-\frac{P_k(z)}{P_k(\zeta)}\right)^{-1}
=\frac{Q^s_k(\zeta,z)}{P_k(\zeta)}\cdot\left(1+\sum_{l=1}^{\infty}
\left(\frac{P_k(z)}{P_k(\zeta)}\right)^l\right)
\end{equation}
and obtain the equality
\begin{multline}\label{SeriesRepresentation}
\int_{\left\{|\zeta|=1,\left\{|P_k(\zeta)|=\epsilon_k\right\}_{k=1}^m\right\}}
\langle{\bar z}\cdot\zeta\rangle^r\Phi(\zeta)
\times\det\left[{\bar z}\
\overbrace{\frac{Q_k(\zeta,z)}{P_k(\zeta)-P_k(z)}}^{m}\
\overbrace{d{\bar z}}^{n-m}\right]\wedge\omega(\zeta)\\
=\sum_{|A|\geq 0}C(A)
\int_{\left\{|\zeta|=1,\left\{|P_k(\zeta)|=\epsilon_k\right\}_{k=1}^m\right\}}
\langle{\bar z}\cdot\zeta\rangle^r\frac{\Phi(\zeta)}{\prod_{k=1}^mP_k(\zeta)}
\cdot\frac{P^A(z)}{P^A(\zeta)}
\wedge\det\left[{\bar z}\ \overbrace{Q_k(\zeta,z)}^{n-q}\
\overbrace{d{\bar z}}^{n-m}\right]\wedge\omega(\zeta),
\end{multline}
where $A=(a_1,\dots,a_m)$ is a multiindex,
$$P^A(\zeta)=P_1^{a_1}(\zeta)\cdots P_m^{a_m}(\zeta),$$
and $|A|=a_1+\cdots+a_m$.\\
\indent
Using Theorem 1.7.6(2) from \cite{CH} (see also \cite{HP3} Prop. 2.3) we obtain that the residual
currents defined by the terms in the right-hand side of \eqref{SeriesRepresentation} with $|A|\geq 1$
are zero-currents from the point of view of \eqref{preL-Operator}, and therefore
we can simplify the expression for
$L_q^{\epsilon}\left[\Phi\right]$ as follows
\begin{multline}\label{NoSeriesRepresentation}
L_q^{\epsilon}\left[\Phi\right](z)
=C(n,m,d)\lim_{\stackrel{\eta\to 1}{\eta<1}}\sum_{r=0}^{\infty}
c_r\eta^r\cdot\int_{\left\{|\zeta|=1,\left\{|P_k(\zeta)=\epsilon_k\right\}_{k=1}^m\right\}}
\langle{\bar z}\cdot\zeta\rangle^r\\
\times\Phi(\zeta)\wedge\det\left[{\bar z}\
\overbrace{\frac{Q_k(\zeta,z)}{P_k(\zeta)-P_k(z)}}^{n-q}\
\overbrace{d{\bar z}}^{q}\right]\wedge\omega(\zeta)\\
=C(n,m,d)\lim_{\stackrel{\eta\to 1}{\eta<1}}\sum_{r=0}^{\infty}
\int_{\left\{|\zeta|=1,\left\{|P_k(\zeta)=\epsilon_k\right\}_{k=1}^m\right\}}
c_r\eta^r\langle{\bar z}\cdot\zeta\rangle^r\frac{\Phi(\zeta)}{\prod_{k=1}^mP_k(\zeta)}\\
\bigwedge\det\left[{\bar z}\ \overbrace{Q_k(\zeta,z)}^{n-q}\
\overbrace{d{\bar z}}^{q}\right]\wedge\omega(\zeta).
\end{multline}

\indent
Before continuing with further transformation of
\eqref{NoSeriesRepresentation} we prove a lemma, in which we slightly modify
the result of Coleff and Herrera from \cite{CH} to obtain the existence of residual limits
over deformed admissible tubes for reduced complete intersections.

\begin{lemma}\label{LimitsExistence}
Let $\left\{F_1,\dots,F_m\right\}$ be polynomials on $\C^n$, let
\begin{equation}
V=\left\{\zeta\in\C^n: F_1(\zeta)=\cdots=F_m(\zeta)=0\right\}
\end{equation}
be a reduced complete intersection subvariety, and let $g$ be a holomorphic function $g$ satisfying:
\begin{itemize}
\item[(i)]
$V^{\prime}=\left\{\zeta: F_1(\zeta)=\cdots=F_m(\zeta)=g(\zeta)=0\right\}$
is a complete intersection,\vspace{0.1in}
\item[(ii)]
for any $z\in V\setminus V^{\prime}$ there exists a neighborhood $W_z$, such that
$\left(V\cap W_z\right)\setminus V^{\prime}$ is a submanifold in $W_z$.
\end{itemize}

\indent
Then for an arbitrary differential form $\Phi(\zeta,u)\in {\cal E}_c^{(n,n-m)}\left(\C^n\right)$
real analytic with respect to parameters $u_1,\dots, u_s$, and a collection of real-valued functions
$\left\{\chi_k(\zeta)\right\}_{k=1}^m\in {\cal E}_c(\C^n)$, such
that $\chi_k(\zeta)\geq 1$ for $|\zeta|<1$, the limit along an admissible path
$\left\{\epsilon_k(t)\right\}_{k=1}^m$ defined in \eqref{admissible}
\begin{multline}\label{LimitExistence}
\lim_{t\to 0}\int_{\left\{|F_k(\zeta)|\cdot\chi_k(\zeta)=\epsilon_k(t)\right\}_{k=1}^m}
\frac{\Phi(\zeta,u)}{\prod_{k=1}^m F_k(\zeta)}\\
\stackrel{\rm def}{=}\lim_{\eta\to 0}\lim_{t\to 0}
\int_{\left\{|g(\zeta)|>\eta,\
\left\{|F_k(\zeta)|\cdot\chi_k(\zeta)=\epsilon_k(t)\right\}_{k=1}^m\right\}}
\frac{\Phi(\zeta,u)}{\prod_{k=1}^m F_k(\zeta)}\\
=\lim_{\eta\to 0}\lim_{t\to 0}
\int_{\left\{|g(\zeta)|>\eta,\
\left\{|F_k(\zeta)|=\epsilon_k(t)\right\}_{k=1}^m\right\}}
\frac{\Phi(\zeta,u)}{\prod_{k=1}^m F_k(\zeta)}
\end{multline}
exists and is real analytic with respect to parameters $u_1,\dots, u_s$.
\end{lemma}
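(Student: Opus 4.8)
The plan is to separate the statement into two independent phenomena, attached respectively to the smooth locus $V\setminus V'$ and to a neighborhood of $V'$, with the cutoff $\{|g|>\eta\}$ providing the interface. The first equality in \eqref{LimitExistence} is the \emph{definition} of the deformed-tube limit, so the content to be proved is: the inner limit $\lim_{t\to0}$ exists for each fixed $\eta>0$ over both tubes and gives the same value (the second equality), the outer limit $\lim_{\eta\to0}$ exists, and the result is real-analytic in $u$.

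First I would fix $\eta>0$ and analyze the inner limit on the region $\{|g|>\eta\}$, where we are bounded away from $V'$. By hypothesis (ii) the set $(V\cap W_z)\setminus V'$ is a submanifold, so after covering $\supp\Phi\cap\{|g|\ge\eta\}$ by finitely many such $W_z$ and inserting a partition of unity I may choose in each chart holomorphic coordinates $(w_1,\dots,w_n)$ with $V=\{w_1=\cdots=w_m=0\}$ and $F_k=a_k w_k$, the $a_k$ nonvanishing. Then $|F_k|\chi_k=\epsilon_k(t)$ reads $|w_k|=\epsilon_k(t)/(|a_k|\chi_k)$, a smooth positive perturbation of the standard radius, and the integral factorizes by Fubini into an iterated one-variable Cauchy integral $\oint_{|w_k|=\delta_k}dw_k/w_k$. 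Since the one-variable residue is insensitive to the radius, the limit $t\to0$ exists, is independent of the factors $\chi_k$, and yields $C\int_{V\cap\{|g|>\eta\}}R[\Phi](\cdot,u)$ for a residue form $R[\Phi]$ that is smooth on $V\setminus V'$ and real-analytic in $u$. In particular the deformed and standard inner limits coincide, which is the second equality of \eqref{LimitExistence} at the level of the inner limits.

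It then remains to pass to $\eta\to0$ and to verify real-analyticity. Here hypothesis (i), that $V'$ is a complete intersection, together with the resolution of singularities of Hironaka \cite{Hi}, lets me reduce a neighborhood of $V'$ to a normal-crossings situation in which both the admissible-path limit \eqref{admissible} and the truncation $\{|g|>\eta\}$ become explicit iterated one-variable limits. In these monomial coordinates the pulled-back residue form is locally integrable across $V'$, which has dimension $n-m-1$, one less than $V$; so by dominated convergence $\int_{V\cap\{|g|>\eta\}}R[\Phi]\to\int_V R[\Phi]$ as $\eta\to0$, and the limit is exactly the Coleff--Herrera current $\bigwedge_k\bar\partial(1/F_k)$ evaluated on $\Phi$, whose existence for the reduced complete intersection $V$ is guaranteed by \cite{CH}. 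The vanishing of the contribution of the $\eta$-neighborhood of $V'$ is the content of the support/principal-value statements of \cite{CH} (Theorem 1.7.6(2)) already used after \eqref{SeriesRepresentation}. Real-analyticity in $u$ follows because each inner integral is an integral over a fixed chain of a form real-analytic in $u$, while the domination controlling the $\eta\to0$ convergence is uniform in $u$ on compact $u$-sets, thanks to the compact support of $\Phi$ and the uniform bounds on its $u$-derivatives; the limit therefore inherits real-analyticity.

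The main obstacle is precisely this outer limit near $V'$. The delicate point, illustrated by the Passare--Tsikh example \cite{PT}, is that neither the existence of the limit nor its independence of the tube shape survives if the scales $\epsilon_k(t)$ degenerate in an uncoupled way. The crux is to carry out the Hironaka resolution adapted to the complete intersection $V'$ so that, after pullback, the admissible ordering of the $\epsilon_k(t)$ and the auxiliary cutoff by $g$ interact correctly with the monomial structure; this is the step where the modification of the Coleff--Herrera argument genuinely uses hypotheses (i) and (ii), and where the deformation by $\chi_k$ must be shown to be harmless at the level of the full residue and not merely on the smooth locus.
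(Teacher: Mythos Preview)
Your overall strategy matches the paper's: fix $\eta>0$, work on the smooth locus $\{|g|>\eta\}$ where the $F_k$ can be taken as part of a local coordinate system, compute the tube integral as an iterated Cauchy residue to see that the factors $\chi_k$ drop out, and then invoke the Coleff--Herrera machinery for the outer limit $\eta\to0$. The paper executes the smooth-locus step slightly differently: instead of treating $\chi_k$ as a perturbation of the radius in holomorphic coordinates, it passes to the \emph{nonholomorphic} coordinates $w_k=F_k\chi_k$ and checks directly that the Jacobian contribution of $\chi_k$ cancels on $\{F_k=0\}$, reducing both tubes to the same fibered residue $\mathrm{res}_{\{\mathbf F,\pi\}}(\Phi,z)$; for $\eta\to0$ it cites Theorems~1.7.2 and~1.8.3 of \cite{CH} rather than redoing a resolution argument. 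These are variations on the same idea.

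There is, however, a genuine gap in your real-analyticity step. You write that uniform convergence in $u$ together with bounds on $u$-derivatives lets the limit ``inherit real-analyticity''. That is false in general: any continuous function on a compact set is a uniform limit of polynomials, and uniform control of finitely many derivatives does not help. What is needed is a common \emph{complex} extension of the integrand in the $u$-variables with uniform bounds there, so that the limit is analytic in that extension. The paper does exactly this: it writes $u_r=\tfrac12(w_r+\bar w_r)$, regards the integrand as a holomorphic function of the $2s$ independent complex variables $(w_1,\dots,w_s,v_1,\dots,v_s)$, and invokes Lemma~2.4 of \cite{HP4} to get analyticity of the residual integral in $(w,v)$; restricting to $v_r=\bar w_r$ then gives real-analyticity in $u$. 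You should replace your last sentence by this complexification argument.
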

\begin{proof}
We assume that the analytic set $V$ is a subset of a polydisk
${\cal P}^n=\left\{|\zeta_i|<1,\ i=1,\dots,n\right\}$, such that the restriction of the projection
$$\pi:\ {\cal P}^n\to {\cal P}^{n-m},$$
defined by the formula $\pi(\zeta_1,\dots,\zeta_n)=(\zeta_{m+1},\dots,\zeta_n)$,
to $V\cap{\cal P}$ is a finite proper covering, and the holomorphic function $g$ on ${\cal P}^n$ is
such that $\dim\big\{V\cap\{g(\zeta)=0\}\big\}= n-m-1$.\\
\indent
For a point $z\in V$, such that $|g(z)|>\eta$ we consider the nonholomorphic complex coordinates
$$w_1(\zeta)=F_1(\zeta)\cdot\chi_1(\zeta),\dots,
w_m(\zeta)=F_m(\zeta)\cdot\chi_m(\zeta),\zeta_{m+1},\dots,\zeta_n$$
in a small enough neighborhood of the point $z$. Then for the $(m,0)$-form
$$\Phi(\zeta,z,u)=\phi(\zeta,z,u)\bigwedge_{j=1}^md\zeta_j
=\left(\bigwedge_{j=m+1}^nd\zeta_j
\bigwedge_{j=m+1}^nd{\bar\zeta}_j\right)\interior\ \Phi(\zeta,u)
\Bigg|_{\left\{\zeta_j=\zeta_j(z)\right\}_{j=m+1}^n}$$
we have
\begin{multline*}
\lim_{t\to 0}\int_{\left\{|F_k(\zeta)|\cdot\chi_k(\zeta)=\epsilon_k(t)\right\}_{k=1}^m
\left\{\zeta_j=\zeta_j(z)\right\}_{j=m+1}^n}
\frac{\Phi(\zeta,z,u)}{\prod_{k=1}^m F_k(\zeta)}\\
=\lim_{t\to 0}\int_{\left\{|w_k(\zeta)|=\epsilon_k(t)\right\}_{k=1}^m
\left\{\zeta_j=\zeta_j(z)\right\}_{j=m+1}^n}
\frac{\Psi(\zeta,z,u)}{\prod_{k=1}^m w_k(\zeta)},
\end{multline*}
where
$$\Psi(\zeta,z,u)=\psi(\zeta,z,u)\bigwedge_{j=1}^md\zeta_j
=\Phi(\zeta,z,u)\cdot\prod_{k=1}^m\chi_k(\zeta).$$
\indent
Using equalities
$$\Psi(\zeta,z,u)\Bigg|_{\left\{\zeta_j=\zeta_j(z)\right\}_{j=m+1}^n}
=\phi(\zeta,z,u)\cdot\prod_{k=1}^m\chi_k(\zeta)
\cdot{\det}^{-1}\left[\frac{\partial F_k}{\partial\zeta_l}\right]
\bigwedge_{k=1}^m dF_k,$$
\begin{multline*}
\frac{\partial w_j}{\partial\zeta_l}\Bigg|_{\left\{F_k(\zeta)=0\right\}_{k=1}^m
\left\{\zeta_j=\zeta_j(z)\right\}_{j=m+1}^n}
=\frac{\partial}{\partial\zeta_l}\left[F_j\cdot\chi_j(\zeta)\right]
\Bigg|_{\left\{F_k(\zeta)=0\right\}_{k=1}^m\left\{\zeta_j=\zeta_j(z)\right\}_{j=m+1}^n}\\
=\frac{\partial F_j}{\partial\zeta_l}(\zeta)
\cdot\chi_j(\zeta)\Bigg|_{\left\{F_k(\zeta)=0\right\}_{k=1}^m
\left\{\zeta_j=\zeta_j(z)\right\}_{j=m+1}^n}\ \text{for}\ j=1,\dots,m,
\end{multline*}
and the corollary of the second one
$$\prod_{k=1}^m\chi_k(\zeta)\bigwedge_{k=1}^m dF_k(\zeta)
\Bigg|_{\left\{F_k(\zeta)=0\right\}_{k=1}^m\left\{\zeta_j=\zeta_j(z)\right\}_{j=m+1}^n}
=\bigwedge_{k=1}^m dw_k(\zeta)
\Bigg|_{\left\{F_k(\zeta)=0\right\}_{k=1}^m\left\{\zeta_j=\zeta_j(z)\right\}_{j=m+1}^n},$$
we obtain for $z$ with $|g(z)|>\delta$ the equality
\begin{multline*}
\lim_{t\to 0}\int_{\left\{|w_k(\zeta)|=\epsilon_k(t)\right\}_{k=1}^m
\left\{\zeta_j=\zeta_j(z)\right\}_{j=m+1}^n}
\frac{\Psi(\zeta,z,u)}{\prod_{k=1}^m w_k(\zeta)}\\
=\lim_{t\to 0}\int_{\left\{|w_k(\zeta)|=\epsilon_k(t)\right\}_{k=1}^m
\left\{\zeta_j=\zeta_j(z)\right\}_{j=m+1}^n}
\phi(\zeta,z,u)\cdot\prod_{k=1}^m\chi_k(\zeta)
\cdot{\det}^{-1}\left[\frac{\partial F_k}{\partial\zeta_l}\right]
\frac{\bigwedge_{k=1}^m dF_k(\zeta)}{\prod_{k=1}^m w_k(\zeta)}\\
=\lim_{t\to 0}\int_{\left\{|w_k(\zeta)|=\epsilon_k(t)\right\}_{k=1}^m
\left\{\zeta_j=\zeta_j(z)\right\}_{j=m+1}^n}
\phi(\zeta,z,u)\cdot{\det}^{-1}\left[\frac{\partial F_k}{\partial\zeta_l}\right]
\frac{\bigwedge_{k=1}^m dw_k(\zeta)}{\prod_{k=1}^m w_k(\zeta)}\\
=(2\pi i)^k\phi(\zeta(z),z,u)\cdot{\det}^{-1}\left[\frac{\partial F_k}{\partial\zeta_l}\right]
\left(\zeta(z)\right)\\
=\lim_{t\to 0}\int_{\left\{|F_k(\zeta)|=\epsilon_k(t)\right\}_{k=1}^m
\left\{\zeta_j=\zeta_j(z)\right\}_{j=m+1}^n}
\Phi(\zeta,z,u)\cdot{\det}^{-1}\left[\frac{\partial F_k}{\partial\zeta_l}\right]
\frac{\bigwedge_{k=1}^m dF_k(\zeta)}{\prod_{k=1}^m F_k(\zeta)}.\\
\end{multline*}
\indent
From the last equality we obtain the equality
\begin{equation*}
\lim_{t\to 0}\int_{\left\{|F_k(\zeta)|\cdot\chi_k(\zeta)=\epsilon_k(t)\right\}_{k=1}^m
\left\{\zeta_j=\zeta_j(z)\right\}_{j=m+1}^n}
\frac{\Phi(\zeta,z,u)}{\prod_{k=1}^m F_k(\zeta)}
=\mbox{res}_{\{{\bf F},\pi\}}\left(\Phi,z\right),
\end{equation*}
which in combination with equality
\begin{equation}\label{FiberedEquality}
\lim_{t\to 0}\int_{\left\{|F_k(\zeta)|=\epsilon_k(t)\right\}_{k=1}^m}
\frac{\Phi(\zeta,u)}{\prod_{k=1}^m F_k(\zeta)}
=\lim_{\eta\to 0}\int_{V\cap\{|g(z)|>\eta\}}\mbox{res}_{\{{\bf F},\pi\}}
\left(\Phi,z\right),
\end{equation}
from Theorem 1.8.3 in \cite{CH} (see also \cite{HP4} Prop. 2.2) and existence of the limit in the left-hand side of
\eqref{FiberedEquality}, following from Theorem 1.7.2 in \cite{CH}, implies the existence
of the limit in the right-hand side of \eqref{LimitExistence}.\\
\indent
To prove the real analyticity of the limit in the right-hand side of \eqref{LimitExistence}
with respect to real variables $u_1,\dots, u_s$ we represent those variables
in terms of complex variables
$$u_r=1/2\left(w_r+{\bar w}_r\right).$$
Then the resulting form can be considered as a restriction of a form analytically depending
on $2s$ complex variables $\left\{w_1,\dots,w_s, v_1,\dots,v_s\right\}$ obtained after
substitution $v_r={\bar w}_r$. Then from Lemma 2.4 in \cite{HP4} we obtain an analytic
dependence of the residual integral on $\left\{w_1,\dots,w_s, v_1,\dots,v_s\right\}$,
and, as a corollary, its real analytic dependence on the original parameters
$u_1,\dots, u_s$.
\end{proof}

\indent
In the next lemma using Lemma~\ref{LimitsExistence} we prove the existence of residual limits
for the integrals on a sphere in $\C^{n+1}$, which are present in
formula \eqref{NoSeriesRepresentation}.

\begin{lemma}\label{ResidueReduction} Let $V\subset \C\P^n$ be a reduced complete intersection subvariety as in \eqref{Variety} satisfying \eqref{gFunctions},
let $U\supset V$ be an open neighborhood
of $V$ in $\C\P^n$, and let $\Phi\in {\cal E}_c^{(0,n-m)}(U\cap U_{\alpha})$
be a differential form of homogeneity zero on $U\cap U_{\alpha}$ for
some $\alpha\in (0,\dots,n)$.\\
\indent
Then formula
\begin{equation}\label{SphericalIntegral}
\lim_{t\to 0}\int_{\left\{|\zeta|=\tau,\left\{|P_k(\zeta)|=\epsilon_k(t)\right\}_{k=1}^m\right\}}
\langle{\bar z}\cdot\zeta\rangle^r\cdot\frac{\Phi(\zeta)}{\prod_{k=1}^mP_k(\zeta)}
\wedge\det\left[{\bar z}\ \overbrace{Q(\zeta,z)}^{m}\
\overbrace{d{\bar z}}^{n-m}\right]\wedge\omega(\zeta),
\end{equation}
where $\left\{\epsilon_k(t)\right\}_{k=1}^m$ is an admissible path,
defines a differential form of homogeneity zero on $U$, real analytic with respect to $z$.\\
\indent
If $\Phi(\zeta)=\sum_{k=1}^m F^{(\alpha)}_k(\zeta)\Omega_k(\zeta)$ with
$\Omega_k\in {\cal E}_c^{(0,n-m)}(U\cap U_{\alpha})$,
then the limit above is equal to zero.
\end{lemma}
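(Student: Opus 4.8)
The plan is to recognize the spherical integral \eqref{SphericalIntegral} as a \emph{parametrized} residue integral on the affine chart $U_{\alpha}$ and then to invoke Lemma~\ref{LimitsExistence} essentially verbatim, taking the parameters $u_1,\dots,u_s$ of that lemma to be the $2(n+1)$ real and imaginary parts of the coordinates of $z$. First I would descend the integral from the sphere to $U_{\alpha}\subset\C\P^n$. Because the integrand has homogeneity zero in $\zeta$, integration over the $S^1$-fibers of the Hopf map contributes only a constant, and the chain $\{|\zeta|=\tau,\ |P_k(\zeta)|=\epsilon_k(t)\}$ projects onto the deformed tube $\{|F^{(\alpha)}_k|\cdot\chi_k=\epsilon_k(t)\}$ in the affine coordinates $w_j=\zeta_j/\zeta_{\alpha}$, where the positive weight
\begin{equation*}
\chi_k(w)=|\zeta_{\alpha}|^{\deg P_k}=\bigl(\tau/(1+|w|^2)^{1/2}\bigr)^{\deg P_k}
\end{equation*}
arises precisely from the relation $P_k(\zeta)=\zeta_{\alpha}^{\deg P_k}F^{(\alpha)}_k(w)$. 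A degree count shows that the $(n+1,n-m)$-form integrated over the sphere descends to an $(n,n-m)$-form in $w$, and the holomorphic nonvanishing factor $\zeta_{\alpha}^{-\sum_k\deg P_k}$ coming from $\prod_kP_k=\zeta_{\alpha}^{\sum\deg P_k}\prod_kF^{(\alpha)}_k$ is absorbed into the numerator; what remains is exactly $1/\prod_kF^{(\alpha)}_k$ times a compactly supported $(n,n-m)$-form.

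Next I would check the hypotheses of Lemma~\ref{LimitsExistence}. The $F^{(\alpha)}_k$ define a reduced complete intersection in $U_{\alpha}$, and $g_{\alpha}$ satisfies conditions (i)--(ii) of that lemma by \eqref{gFunctions}(a)--(b). The coefficients of the descended form depend polynomially on $\zeta,\bar\zeta$ and on $z,\bar z$ — through $\langle\bar z\cdot\zeta\rangle^r$, through the columns $\bar z$ and $d\bar z$ and the entries $Q^i_k(\zeta,z)$ of the determinant — hence real-analytically on $z$; compact support of $\Phi$ in $U\cap U_{\alpha}$ makes the whole form compactly supported. Since $\mathrm{supp}\,\Phi$ is compact, $|\zeta_{\alpha}|$ is bounded below there, so $\chi_k$ is bounded below by a positive constant; a harmless positive rescaling of $\chi_k$ (equivalently of $\epsilon_k$, which preserves admissibility of the path \eqref{admissible}) arranges $\chi_k\geq1$ on the support. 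Lemma~\ref{LimitsExistence} then delivers simultaneously the existence of the limit in \eqref{SphericalIntegral} and its real analyticity in $z$.

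Homogeneity zero of the resulting form in $z$ I would verify, as in the proof of Proposition~\ref{Formula}, from the scaling weights of $B^*$, of $\omega(\zeta)$, and of the coefficients $Q_k(\zeta,z)$ subject to \eqref{HomogeneityConditions}, which combine to total weight zero; passage to the residual limit and restriction to a single term of the convergent expansion preserve these weights. For the vanishing statement, suppose $\Phi=\sum_{k=1}^mF^{(\alpha)}_k\Omega_k$. Using $F^{(\alpha)}_k=P_k/\zeta_{\alpha}^{\deg P_k}$ one factor of $P_k$ cancels in each summand, so that
\begin{equation*}
\frac{\Phi(\zeta)}{\prod_{k=1}^mP_k(\zeta)}=\sum_{k=1}^m\frac{\Omega_k(\zeta)}{\zeta_{\alpha}^{\deg P_k}\prod_{j\neq k}P_j(\zeta)},
\end{equation*}
i.e. each term carries only $m-1$ of the defining polynomials in its denominator. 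By the Coleff--Herrera vanishing principle (Theorem~1.7.6(2) in \cite{CH}, see also \cite{HP3} Prop.~2.3) — the very principle already used to discard the $|A|\geq1$ terms after \eqref{SeriesRepresentation} — the residue of such a term over the admissible tube is zero, so the limit in \eqref{SphericalIntegral} vanishes.

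The step I expect to be the main obstacle is the first one: carrying out the Hopf descent so that the sphere integral becomes exactly the affine residue integral of Lemma~\ref{LimitsExistence}. One must track that bidegree, compact support, and real-analytic parameter dependence are all preserved, and above all that the passage $P_k\mapsto F^{(\alpha)}_k$ produces precisely the positive weight $\chi_k=|\zeta_{\alpha}|^{\deg P_k}$, so that the deformed tube $\{|F^{(\alpha)}_k|\chi_k=\epsilon_k\}$ is the one governed by Lemma~\ref{LimitsExistence} and yields the same residue as the undeformed tube $\{|F^{(\alpha)}_k|=\epsilon_k\}$.
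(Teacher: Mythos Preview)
Your approach is the paper's: pass to the affine chart, recognize the tube as $\{|F^{(\alpha)}_k|\chi_k=\epsilon_k\}$, and invoke Lemma~\ref{LimitsExistence} with $z$ as the real-analytic parameter. One step, however, is wrong as stated. You claim that ``the integrand has homogeneity zero in $\zeta$, [so] integration over the $S^1$-fibers of the Hopf map contributes only a constant.'' It does not: the columns $Q_k(\zeta,z)$ are homogeneous of degree $\deg P_k-1$ in $(\zeta,z)$ \emph{jointly} (condition~\eqref{HomogeneityConditions}), not in $\zeta$ alone, so the integrand is not $S^1$-invariant in $\zeta$ and the Hopf descent is not the trivial one. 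The paper repairs this by keeping the fiber variable explicitly: writing $\zeta_0=\rho_0(w)e^{i\phi_0}$, $\zeta_j=\zeta_0w_j$, one computes $\omega^{\prime}(\zeta)=\zeta_0^{n+1}\bigwedge_{j=1}^ndw_j$ and $\sum_i\bar\zeta_id\zeta_i=(\tau^2/\zeta_0)d\zeta_0+\zeta_0\bar\zeta_0\sum_i\bar w_idw_i$, after which the spherical integral factors as an outer integral $\int_0^{2\pi}d\phi_0$ times an interior integral over the deformed tube $\{|F_k(w)|\chi_k(w)=\epsilon_k(t)\}$ in $U_0$; the $\phi_0$-dependence enters only through the polynomial entries $Q_k(e^{i\phi_0},w,z)$ and an overall factor $e^{i(n+r-d+1)\phi_0}$, hence real-analytically. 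One then applies Lemma~\ref{LimitsExistence} to the interior integral with $\phi_0$ adjoined to the parameter list, and the final $\phi_0$-integration preserves real analyticity in $z$. With this correction your argument goes through and coincides with the paper's, including the vanishing statement via Theorem~1.7.6(2) of \cite{CH}.
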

\begin{proof} Without loss of generality we may assume that $\alpha=0$ in \eqref{SphericalIntegral}.
We transform the integral in this formula as follows
\begin{multline*}
\int_{\left\{|\zeta|=\tau,\left\{|P_k(\zeta)|=\epsilon_k(t)\right\}_{k=1}^m\right\}}
\langle{\bar z}\cdot\zeta\rangle^r\cdot\frac{\Phi(\zeta)}{\prod_{k=1}^mP_k(\zeta)}
\wedge\det\left[{\bar z}\ \overbrace{Q(\zeta,z)}^{m}\
\overbrace{d{\bar z}}^{n-m}\right]\wedge\omega(\zeta)\\
=\int_{\left\{|\zeta|=\tau,\left\{|P_k(\zeta)|=\epsilon_k(t)\right\}_{k=1}^m\right\}}
\frac{\langle{\bar z}\cdot\zeta\rangle}{\tau^2}^r\cdot\frac{\Phi(\zeta)}{\prod_{k=1}^mP_k(\zeta)}
\wedge\det\left[{\bar z}\ \overbrace{Q(\zeta,z)}^{m}\ \overbrace{d{\bar z}}^{n-m}\right]\\
\wedge\left(\sum_{i=0}^n{\bar\zeta}_id\zeta_i\right)\wedge\omega^{\prime}(\zeta),
\end{multline*}
where $\omega^{\prime}(\zeta)=\sum_{i=0}^n(-1)^{i}\zeta_id\zeta_0
\wedge\stackrel{\stackrel{i}{\vee}}{\cdots}\wedge d\zeta_n$.\\
\indent
Then, using the nonhomogeneous coordinates
\begin{equation}\label{Nonhomogeneous}
\zeta_0,w_1=\zeta_1/\zeta_0,\dots,w_n=\zeta_n/\zeta_0
\end{equation}
and equality
\begin{equation}\label{r0Dependence}
1+\sum_{i=1}^nw_i\cdot{\bar w}_i=\frac{\tau^2}{\zeta_0\cdot{\bar\zeta}_0}
\end{equation}
on the sphere $\*S^{2n+1}(\tau)$ of radius $\tau$ in $\C^{n+1}$ we represent the form
$\sum_{i=0}^n{\bar\zeta}_id\zeta_i$ in
$${\widetilde U}_0(\tau)=\left\{\zeta\in\C^{n+1}:\ |\zeta|=\tau, \zeta_0\neq 0\right\}$$
as
\begin{multline}\label{dModuleRepresentation}
\sum_{i=0}^n{\bar\zeta}_id\zeta_i={\bar\zeta}_0d\zeta_0
+\sum_{i=1}^n{\bar\zeta}_0\cdot{\bar w}_i\left(\zeta_0dw_i+w_id\zeta_0\right)\\
={\bar\zeta}_0\left(1+\sum_{i=1}^n{\bar w}_i\cdot w_i\right)d\zeta_0
+\zeta_0\cdot {\bar\zeta}_0\left(\sum_{i=1}^n{\bar w}_idw_i\right)
=\frac{\tau^2}{\zeta_0}d\zeta_0+\zeta_0\cdot {\bar\zeta}_0\left(\sum_{i=1}^n{\bar w}_idw_i\right).
\end{multline}
\indent
For the form $\omega^{\prime}(\zeta)$ using equalities $d\zeta_i=\zeta_0dw_i+w_id\zeta_0$
for $i=1,\dots,n$ we obtain
\begin{multline}\label{OmegaRepresentation}
\omega^{\prime}(\zeta)=\sum_{i=0}^n(-1)^{i}\zeta_id\zeta_0\wedge\stackrel{\stackrel{i}{\vee}}
{\cdots}\wedge d\zeta_n=\zeta_0\bigwedge_{j=1}^n\left(\zeta_0dw_j+w_jd\zeta_0\right)\\
-\zeta_0w_1d\zeta_0\wedge\left(\zeta_0dw_2+w_2d\zeta_0\right)\wedge\cdots
\wedge\left(\zeta_0dw_n+w_nd\zeta_0\right)\\
+\zeta_0w_2d\zeta_0\wedge\left(\zeta_0dw_1+w_1d\zeta_0\right)\wedge
\stackrel{\stackrel{2}{\vee}}{\cdots}
\wedge\left(\zeta_0dw_n+w_nd\zeta_0\right)\\
+\cdots\\
+(-1)^n\zeta_0w_nd\zeta_0\wedge\left(\zeta_0dw_1+w_1d\zeta_0\right)\wedge\cdots
\wedge\left(\zeta_0dw_{n-1}+w_{n-1}d\zeta_0\right)\\
=\zeta_0\bigwedge_{j=1}^n\left(\zeta_0dw_j+w_jd\zeta_0\right)
+\zeta_0^n\sum_{j=1}(-1)^jw_jd\zeta_0\wedge dw_1
\wedge\stackrel{\stackrel{j}{\vee}}{\cdots}\wedge dw_n\\
=\zeta_0^{n+1}dw_1\wedge\cdots\wedge dw_n.
\end{multline}
\indent
Using formulas \eqref{dModuleRepresentation} and \eqref{OmegaRepresentation},
we obtain the equality
\begin{multline*}
\int_{\left\{|\zeta|=\tau,\left\{|P_k(\zeta)|=\epsilon_k(t)\right\}_{k=1}^m\right\}}
\frac{\langle{\bar z}\cdot\zeta\rangle}{\tau^2}^r\cdot\frac{\Phi(\zeta)}{\prod_{k=1}^mP_k(\zeta)}
\wedge\det\left[{\bar z}\ \overbrace{Q(\zeta,z)}^{m}\ \overbrace{d{\bar z}}^{n-m}\right]
\wedge\left(\sum_{i=0}^n{\bar\zeta}_id\zeta_i\right)\wedge\omega^{\prime}(\zeta)\\
=\int_{\left\{|\zeta|=\tau,\left\{|P_k(\zeta)|=\epsilon_k(t)\right\}_{k=1}^m\right\}}
\frac{\langle{\bar z}\cdot\zeta\rangle}{\tau^2}^r\cdot\frac{\Phi(\zeta)}{\prod_{k=1}^mP_k(\zeta)}
\wedge\det\left[{\bar z}\ \overbrace{Q(\zeta,z)}^{m}\ \overbrace{d{\bar z}}^{n-m}\right]\\
\wedge\left(\frac{\tau^2}{\zeta_0}d\zeta_0
+\zeta_0\cdot {\bar\zeta}_0\left(\sum_{i=1}^n{\bar w}_idw_i\right)\right)
\wedge\zeta_0^{n+1}\bigwedge_{j=1}^n dw_j\\
=\int_{\left\{|\zeta|=\tau,\left\{|P_k(\zeta)|=\epsilon_k(t)\right\}_{k=1}^m\right\}}
\left({\bar z}_0+\sum_{j=1}^n{\bar z}_j\cdot w_j\right)^r\cdot
\frac{\Phi(\zeta)}{\prod_{k=1}^mP_k(\zeta)}\\
\wedge\det\left[{\bar z}\ \overbrace{Q(\zeta,z)}^{m}\ \overbrace{d{\bar z}}^{n-m}\right]
\wedge \left(\zeta_0^{n+r}d\zeta_0\right)\bigwedge_{j=1}^n dw_j.
\end{multline*}
\indent
Then, using the nonhomogeneous polynomials
\begin{equation}\label{Polynomials}
F_k(w)=F^{(0)}_k(w)=P_k(\zeta)/\zeta_0^{\deg P_k}
\end{equation}
and denoting $\chi(w)=\left(1+\sum_{i=1}^n w_i{\bar w}_i\right)^{-\frac{1}{2}}$,
and $\chi_k(w)=\chi(w)^{\deg P_k}$, so that
$$\left|P_k(\zeta)\right|=|F_k(w)|\cdot|\zeta_0|^{\deg P_k}=|F_k(w)|\cdot\chi_k(w)\
\text{on}\ \*S^{2n+1}(1)$$
we obtain the equality
\begin{multline}\label{LocalIntegralEquality}
\int_{\left\{|\zeta|=\tau,\left\{|P_k(\zeta)|=\epsilon_k(t)\right\}_{k=1}^m\right\}}
\left({\bar z}_0+\sum_{j=1}^n{\bar z}_j\cdot w_j\right)^r\cdot
\frac{\Phi(\zeta)}{\prod_{k=1}^mP_k(\zeta)}\\
\wedge\det\left[{\bar z}\ \overbrace{Q(\zeta,z)}^{m}\ \overbrace{d{\bar z}}^{n-m}\right]
\wedge \left(\zeta_0^{n+r}d\zeta_0\right)\bigwedge_{j=1}^n dw_j\\
=\int_{\left\{|\zeta|=\tau,\left\{|F_k(w)|\cdot\chi_k(w)=\epsilon_k(t)\right\}_{k=1}^m\right\}}
\left({\bar z}_0+\sum_{j=1}^n{\bar z}_j\cdot w_j\right)^r
\left(\zeta_0^{n+r-\sum_{k=1}^m \deg P_k}d\zeta_0\right)\\
\times\frac{\Phi(\zeta)}{\prod_{k=1}^mF_k(w)}
\wedge\det\left[{\bar z}\ \overbrace{Q(\zeta,z)}^{m}\ \overbrace{d{\bar z}}^{n-m}\right]
\bigwedge_{j=1}^n dw_j\\
=\int_{\left\{|\zeta|=\tau,\left\{|F_k(w)|\cdot\chi_k(w)=\epsilon_k(t)\right\}_{k=1}^m\right\}}
\left({\bar z}_0+\sum_{j=1}^n{\bar z}_j\cdot w_j\right)^r
\left(\zeta_0^{n+r-\sum_{k=1}^m \deg P_k}d\zeta_0\right)\\
\times\frac{\Phi(w)}{\prod_{k=1}^m F_k(w)}
\wedge\det\left[{\bar z}\ \overbrace{Q(\zeta,z)}^{m}\ \overbrace{d{\bar z}}^{n-m}\right]
\bigwedge_{j=1}^n dw_j\\
=i\int_0^{2\pi}e^{i\left(n+r-\sum_{k=1}^m \deg P_k+1\right)\phi_0}d\phi_0
\int_{\left\{w\in U_0,\left\{|F_k(w)|\cdot\chi_k(w)=\epsilon_k(t)\right\}_{k=1}^m\right\}}
\left({\bar z}_0+\sum_{j=1}^n{\bar z}_j\cdot w_j\right)^r\\
\times\rho_0(w)^{n+r-\sum_{k=1}^m \deg P_k+1}
\cdot\frac{\Phi(w)}{\prod_{k=1}^m F_k(w)}
\wedge\det\left[{\bar z}\ \overbrace{Q(e^{i\phi_0},w,z)}^{m}\
\overbrace{d{\bar z}}^{n-m}\right]\bigwedge_{j=1}^n dw_j,
\end{multline}
where we used notation $\zeta_0=\rho_0(w)\cdot e^{i\phi_0}$ with
$$\rho_0(w)=\frac{\tau}{\sqrt{1+\sum_{i=1}^n|w_i|^2}}$$
depending on $\left\{w_i,{\bar w}_i\right\}_{i=1}^n$ on the sphere $\*S^{2n+1}(\tau)$
according to formula \eqref{r0Dependence}.\\
\indent
Applying Lemma~\ref{LimitsExistence} to the interior integral in the right-hand side of
\eqref{LocalIntegralEquality} we obtain the existence of the limit in \eqref{SphericalIntegral} and
its real analytic dependence on $z$. Applying Theorem 1.7.6(2) from \cite{CH} (see also \cite{HP3} Prop. 2.3)
to the interior integral in the right-hand side of \eqref{LocalIntegralEquality} we obtain
that the limit in \eqref{SphericalIntegral} is equal to zero if
$\Phi(\zeta)=\sum_{k=1}^m F^{(\alpha)}_k(\zeta)\Omega_k(\zeta)$ with
$\Omega_k\in {\cal E}_c^{(0,n-m)}(U\cap U_{\alpha})$.
\end{proof}

\indent
We further simplify the right-hand side of \eqref{NoSeriesRepresentation} using the
following lemma.

\begin{lemma}\label{ZeroIntegrals} Let $\phi$ be a $\bar\partial$-closed residual current defined
by a collection of forms $\left\{\Phi^{(0,n-m)}_{\alpha}\right\}_{\alpha=0}^n$
of homogeneity zero on a neighborhood $U$ of the reduced subvariety
$$V=\Big\{\zeta\in \C\P^n:\ P_1(\zeta)=\cdots=P_m(\zeta)=0\Big\}$$
satisfying \eqref{ResidualCurrent} and \eqref{Closed},
and let $\Phi(\zeta)=\sum_{\alpha=0}^n\vartheta_{\alpha}(\zeta)\Phi_{\alpha}(\zeta)$
be a differential form of homogeneity zero on $U$.\\
\indent
Then for an arbitrary
$\gamma\in {\cal E}_c^{(n,0)}\left(V,{\cal L}\right)$
the equality
\begin{multline}\label{ZeroIntegral}
\lim_{\tau\to 0}\int_{T^{\delta(\tau)}_{\beta}}
\wedge\frac{\gamma(z)}{\prod_{k=1}^m F^{(\beta)}_k(z)}
\bigwedge\left(\lim_{t\to 0}\int_{\left\{|\zeta|=1,\left\{|P_k(\zeta)|=\epsilon_k(t)\right\}_{k=1}^m\right\}}
\langle{\bar z}\cdot\zeta\rangle^r\cdot\frac{\Phi(\zeta)}{\prod_{k=1}^mP_k(\zeta)}\right.\\
\left.\wedge\det\left[{\bar z}\ \overbrace{Q(\zeta,z)}^{m}\
\overbrace{d{\bar z}}^{n-m}\right]\wedge\omega(\zeta)\right)=0
\end{multline}
holds unless
\begin{equation}\label{IndexCondition}
r\leq\sum_{k=1}^m\deg P_k-n-1.
\end{equation}
\end{lemma}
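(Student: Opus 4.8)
The plan is to show that the inner residual limit in \eqref{ZeroIntegral}—the integral appearing in \eqref{SphericalIntegral}—already vanishes as a differential form in $z$ whenever $r$ violates \eqref{IndexCondition}, so that the outer residual integration against $\gamma$ contributes nothing. The whole effect is a consequence of the invariance of the integration cycle $\{|\zeta|=1,\ |P_k(\zeta)|=\epsilon_k(t)\}$ under the rotation $\zeta\mapsto e^{i\phi_0}\zeta$, which is legitimate because each $P_k$ is homogeneous, so $|P_k(e^{i\phi_0}\zeta)|=|P_k(\zeta)|$. Concretely I would reuse the reduction already performed in the proof of Lemma~\ref{ResidueReduction}: passing to the affine coordinates \eqref{Nonhomogeneous} with $\zeta_0=\rho_0(w)e^{i\phi_0}$ brings the inner integral into the factorized shape of the last line of \eqref{LocalIntegralEquality}, where the fiber variable $\phi_0$ is integrated over $[0,2\pi]$.

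The key step is to track the total power of $e^{i\phi_0}$ carried by the integrand under $\zeta\mapsto e^{i\phi_0}\zeta$. The factor $\langle\bar z\cdot\zeta\rangle^r$ contributes $e^{ir\phi_0}$; the form $\Phi$, being of homogeneity zero, is invariant and contributes nothing; the denominator $\prod_{k=1}^m P_k(\zeta)$ contributes $e^{-i(\sum_{k=1}^m\deg P_k)\phi_0}$; and $\omega(\zeta)$ contributes $e^{i(n+1)\phi_0}$. The only delicate factor is $\det[\bar z\ Q(\zeta,z)^m\ d\bar z^{n-m}]$: its columns $\bar z$ and $d\bar z$ do not depend on $\zeta$, whereas each $Q_k(\zeta,z)$ is, by \eqref{HomogeneityConditions} of Lemma~\ref{WeilCoefficients}, only jointly homogeneous of degree $\deg P_k-1$ in $(\zeta,z)$. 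Hence $Q_k(e^{i\phi_0}\zeta,z)=\sum_{j=0}^{\deg P_k-1}e^{ij\phi_0}Q_k^{[j]}(\zeta,z)$ with $Q_k^{[j]}$ the part of $\zeta$-degree $j$, and expanding the multilinear determinant one obtains pieces carrying phases $e^{iJ\phi_0}$ with $J=j_1+\cdots+j_m$ ranging over $0\le J\le\sum_{k=1}^m\deg P_k-m$.

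It remains to integrate in $\phi_0$. Since $\int_0^{2\pi}e^{iN\phi_0}\,d\phi_0=0$ for every nonzero integer $N$, only the pieces with total phase $r-\sum_{k=1}^m\deg P_k+(n+1)+J=0$ survive, i.e. those with $J=\sum_{k=1}^m\deg P_k-n-1-r$. Such a $J$ lies in the admissible range $0\le J\le\sum_{k=1}^m\deg P_k-m$ exactly when $r\le\sum_{k=1}^m\deg P_k-n-1$, the remaining constraint $r\ge m-n-1$ being automatic as $r\ge 0$ and $m\le n$. Consequently, when \eqref{IndexCondition} fails every surviving phase is nonzero, the $\phi_0$-integration annihilates the integrand, the inner limit in \eqref{SphericalIntegral} vanishes identically in $z$, and \eqref{ZeroIntegral} follows at once. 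I expect the main obstacle to be precisely this determinant expansion: because $Q_k$ is only jointly homogeneous, its separate $\zeta$-degree spreads over $0,\dots,\deg P_k-1$, and it is exactly this spread that turns the single resonance $N=0$ into the inequality \eqref{IndexCondition} rather than an equality.
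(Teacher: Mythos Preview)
Your argument is correct and takes a genuinely different route from the paper. The paper proceeds by a scaling/Stokes argument: it applies Stokes' formula to the form $\beta(\zeta,z)$ of \eqref{betaform} on the shell $\{a<|\zeta|<1,\ |P_k(\zeta)|=\epsilon_k|\zeta|^{\deg P_k}\}$, uses the volume estimate \eqref{VolumeEstimate} together with the coefficient-homogeneity \eqref{PhiHomogeneity} to show that the integral over the sphere of radius $a$ tends to $0$ as $a\to 0$ precisely when $r+n+1-\sum_k\deg P_k>0$, and then invokes the $\bar\partial$-closedness \eqref{Closed} and the compatibility \eqref{ResidualCurrent} (via Lemma~\ref{ResidueReduction}) to kill the Stokes remainder terms in \eqref{epsilonStokes}. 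Your approach replaces all of this with a direct $S^1$-symmetry argument on the rotation-invariant cycle $\{|\zeta|=1,\ |P_k(\zeta)|=\epsilon_k(t)\}$: it is more elementary, avoids Stokes altogether, and---notably---does not use the hypotheses \eqref{Closed} and \eqref{ResidualCurrent} at all, since it shows the inner integral in \eqref{SphericalIntegral} already vanishes for each fixed $t$ whenever $r>\sum_k\deg P_k-n-1$, for \emph{any} $(0,n-m)$-form $\Phi$ of homogeneity zero. The phase factor $e^{i(n+r-\sum\deg P_k+1)\phi_0}$ that you exploit is in fact already visible in the last line of \eqref{LocalIntegralEquality}, so your observation is a natural continuation of that computation; the paper's Stokes route is closer in spirit to the residue and scaling estimates used throughout the section, which may explain the authors' preference even though it is less direct here.
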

\begin{proof} We notice that for all values of $a>0$
and $\epsilon=\left(\epsilon_1,\dots,\epsilon_k\right)$ the sets
$$S( a)=\left\{\zeta\in \*S^{2n+1}(a):
\left\{ \left|P_k(\zeta)\right|=\epsilon_k\cdot a^{\deg P_k}\right\}_{k=1}^m\right\}$$
are real analytic subvarieties of $\*S^{2n+1}(a)$ of real dimension $2n+1-m$
satisfying
\begin{equation}\label{VolumeEstimate}
c\cdot a^{2n+1-m}\cdot\text{Volume}\left(S(1)\right)
<\text{Volume}_{2n+1-m}\left(S(a)\right)
<C\cdot a^{2n+1-m}\cdot\text{Volume}\left(S(1)\right).
\end{equation}
\indent
We denote
$$\Phi_{\alpha}(\zeta,z)=\langle{\bar z}\cdot\zeta\rangle^r\cdot
\frac{\Phi_{\alpha}(\zeta)}{\prod_{k=1}^mP_k(\zeta)}
\det\left[{\bar z}\ \overbrace{Q(\zeta,z)}^{m}\ \overbrace{d{\bar z}}^{n-m}\right]
\wedge\omega(\zeta),$$
and apply the Stokes' formula to the differential form
\begin{equation}\label{betaform}
\beta(\zeta,z)=\sum_{\alpha=1}^N\vartheta_{\alpha}(\zeta)\Phi_{\alpha}(\zeta,z)
=\sum_{\alpha=1}^N\langle{\bar z}\cdot\zeta\rangle^r
\cdot\frac{\vartheta_{\alpha}(\zeta)\Phi_{\alpha}(\zeta)}
{\prod_{k=1}^mP_k(\zeta)}
\det\left[{\bar z}\ \overbrace{Q(\zeta,z)}^{m}\ \overbrace{d{\bar z}}^{n-m}\right]\wedge\omega(\zeta)
\end{equation}
on the variety
$$\left\{\zeta\in \C^{n+1}:
\left\{ \left|P_k(\zeta)\right|=\epsilon_k\cdot|\zeta|^{\deg P_k}\right\}_{k=1}^m,\
a<|\zeta|<1\right\}$$
with the boundary
\begin{equation*}
\Big\{\zeta:|\zeta|=a,\
\left\{|P_k(\zeta)|=\epsilon_k\cdot a^{\deg P_k}\right\}_{k=1}^m\Big\}
\bigcup\Big\{\zeta:|\zeta|=1,\ \left\{|P_k(\zeta)|=\epsilon_k\right\}_{k=1}^m\Big\}.
\end{equation*}
Then using equality \eqref{Closed}
we obtain the equality
\begin{multline}\label{epsilonStokes}
\int_{\left\{|\zeta|=1, \left\{|P_k(\zeta)|=\epsilon_k(t)\right\}_{k=1}^m\right\}}\beta(\zeta,z)
-\int_{\left\{|\zeta|=a,\
\left\{|P_k(\zeta)|=\epsilon_k(t)\cdot a^{\deg P_k}\right\}_{k=1}^m\right\}}\beta(\zeta,z)\\
=\sum_{\alpha=1}^N\sum_{j=1}^m\int_{a}^1d\tau
\int_{\left\{|\zeta|=\tau, \left\{|P_k(\zeta)|=\epsilon_k(t)\cdot\tau^{\deg P_k}\right\}_{k=1}^m\right\}}
F^{(\alpha)}_j(\zeta)\cdot\beta_j^{(\alpha)}(\zeta,z)\\
+\sum_{\alpha=1}^N\int_{a}^1d\tau
\int_{\left\{|\zeta|=\tau, \left\{|P_k(\zeta)|=\epsilon_k(t)\cdot\tau^{\deg P_k}\right\}_{k=1}^m\right\}}
\bar\partial\vartheta_{\alpha}(\zeta)\wedge\Big(d|\zeta|\interior\ \Phi_{\alpha}(\zeta,z)\Big)
\end{multline}
for arbitrary $t$ and $0<a<1$.\\
\indent
Using estimate \eqref{VolumeEstimate} and the homogeneity property
\begin{equation}\label{PhiHomogeneity}
\Phi_{\beta}(t\cdot\zeta)={\bar t}^{-(n-m)}\cdot\Phi_{\beta}(\zeta)
\end{equation}
of the coefficients of $\Phi^{(0,n-m)}$ from Proposition 1.1 in \cite{HP1} we obtain that if
\begin{equation}\label{IndexPositive}
r+n+1-\sum_{k=1}^m\deg P_k >0,
\end{equation}
then
\begin{equation*}
\left|\int_{\left\{|\zeta|= a,\
\left\{|P_k(\zeta)|=\epsilon_k(t)\cdot a^{\deg P_k}\right\}_{k=1}^m\right\}}
\beta(\zeta,z)\right|
<C^{\prime}(\epsilon)\cdot a^{r+2n+1-m-(n-m)-\sum_{k=1}^m\deg P_k}\longrightarrow 0
\end{equation*}
as $t$ is fixed and $a\to 0$.\\
\indent
For the first sum of integrals in the right-hand side of \eqref{epsilonStokes} we have
\begin{multline}\label{AreaIntegralOne}
\Bigg|\int_{ a}^1d\tau\int_{\left\{|\zeta|=\tau,\
\left\{|P_k(\zeta)|=\epsilon_k(t)\cdot\tau^{\deg P_k}\right\}_{k=1}^m\right\}}
F^{(\alpha)}_j(\zeta)\cdot\beta_j^{(\alpha)}(\zeta,z)\Bigg|\\
<C\int_{ a}^1d\tau\cdot\tau^{r+2n+1-m-(n-m)-\sum_{k=1}^m\deg P_k}\\
\times\int_{\left\{|\zeta|=\tau,\
\left\{|P_k(\zeta)|=\epsilon_k(t)\cdot\tau^{\deg P_k}\right\}_{k=1}^m\right\}}
F^{(\alpha)}_j(\zeta)\cdot\left(d|\zeta|\interior\beta_j^{(\alpha)}(\zeta,z)\right)\\
<C^{\prime}(\epsilon)\frac{\left(1- a^{r+n+2-\sum_{k=1}^m\deg P_k}\right)}
{r+n+2-\sum_{k=1}^m\deg P_k}\longrightarrow 0
\end{multline}
as $t\to 0$, since $a<1$, condition \eqref{IndexPositive} is satisfied,
and $C^{\prime}(\epsilon)\to 0$ as $t\to 0$ by Lemma~\ref{ResidueReduction}.

\indent
For the second sum of integrals in the right-hand side of \eqref{epsilonStokes}
using equality $\sum_{\alpha}\bar\partial\vartheta_{\alpha}=0$, equality \eqref{ResidualCurrent}
for residual currents of homogeneity zero, and Lemma~\ref{ResidueReduction} we obtain
as in \eqref{AreaIntegralOne} that the limit of this sum is also zero as $t\to 0$.\\
\indent
This completes the proof of the Lemma.
\end{proof}

\indent
Combining the results of Lemmas~\ref{ResidueReduction} and \ref{ZeroIntegrals}
with formula \eqref{LDeterminant} we obtain the following
\begin{proposition}\label{HodgeProjector} Let $V\subset \C\P^n$ be a reduced
complete intersection subvariety as in \eqref{Variety} satisfying conditions
\eqref{gFunctions}, let  $\phi$ be a $\bar\partial$-closed residual current defined
by a collection of forms $\left\{\Phi^{(0,n-m)}_{\alpha}\right\}_{\alpha=0}^n$
of homogeneity zero on a neighborhood $U$ of $V$,
and let $\Phi(\zeta)=\sum_{\alpha=0}^n\vartheta_{\alpha}(\zeta)\Phi_{\alpha}(\zeta)$.\\
\indent
Then for an admissible path $\epsilon(t)$ and operator $L_{n-m}^{\epsilon(t)}$
the following equality is satisfied
\begin{multline}\label{LResidue}
\lim_{t\to 0}L_{n-m}^{\epsilon(t)}\left[\Phi\right](z)
=\sum_{0\leq r\leq d-n-1}C(n,m,d,r)\lim_{t\to 0}\int_{\left\{|\zeta|=1,\
\left\{|P_k(\zeta)|=\epsilon_k(t)\right\}_{k=1}^m\right\}}
\langle{\bar z}\cdot\zeta\rangle^r\\
\times\frac{\Phi(\zeta)}{\prod_{k=1}^mP_k(\zeta)}
\det\left[{\bar z}\ \overbrace{Q(\zeta,z)}^{m}\
\overbrace{d{\bar z}}^{n-m}\right]\wedge\omega(\zeta)\\
=\sum_{0\leq r\leq d-n-1}C(n,m,d,r)\left(2\pi\right)^mi^{m+1}
\sum_{\alpha=0}^n\int_0^{2\pi}d\phi_{\alpha}\\
\times\text{Res}_V\left\{\left\langle{\bar z}\cdot w^{(\alpha)}
\right\rangle^r\Phi_{\alpha}(\zeta)
\wedge\det\left[{\bar z}\ \overbrace{Q(e^{i\phi_{\alpha}},w^{(\alpha)},z)}^{m}\
\overbrace{d{\bar z}}^{n-m}\right]\bigwedge_{j=1}^n dw^{(\alpha)}_j\right\},
\end{multline}
where $d=\sum_{k=1}^m\deg P_k$,
$\langle{\bar z}\cdot w^{(\alpha)}\rangle={\bar z}_0
+\sum_{j=1}^n{\bar z}_jw^{\alpha}_j$,
and $\mbox{Res}_V$ is the residue of Coleff-Herrera \cite{CH}.
\end{proposition}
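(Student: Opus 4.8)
The plan is to begin from the series representation \eqref{NoSeriesRepresentation} for $L_{n-m}^{\epsilon(t)}[\Phi](z)$, valid for $z\in U^{\epsilon(t)}$, which already displays the operator as an $\eta\to 1$ limit of an infinite sum over $r$ of spherical integrals, and to show that once the whole expression is read as a residual current in the sense of \eqref{preL-Operator}, all but finitely many terms drop out, after which the survivors are evaluated locally by the computation assembled in Lemma~\ref{ResidueReduction}. Concretely, I would pair both sides of \eqref{NoSeriesRepresentation} against an arbitrary test form $\gamma\in{\cal E}_c^{(n,0)}(V,{\cal L})$ and take the tube-limit $\tau\to 0$ that defines $\langle L_{n-m}^{\epsilon(t)}[\Phi],\gamma\rangle$ via \eqref{preL-Operator}.

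The decisive step is the truncation of the sum over $r$. For each $r$, the $r$-th term of \eqref{NoSeriesRepresentation}, after pairing with $\gamma$ and taking the inner tube-limit, is exactly the quantity appearing in \eqref{ZeroIntegral} of Lemma~\ref{ZeroIntegrals}; that lemma asserts that it vanishes unless $r\leq d-n-1$, where $d=\sum_{k=1}^m\deg P_k$. Hence, interpreted as a residual current, the infinite sum collapses to the finite sum over $0\leq r\leq d-n-1$. Since the remaining sum is finite, the outer limit $\eta\to 1$ is harmless: each coefficient $c_r\eta^r$ tends to the constant $c_r$, which I absorb into $C(n,m,d,r)$, yielding the first equality in \eqref{LResidue}.

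It then remains to evaluate, for each admissible $r\leq d-n-1$, the spherical residual limit
\[
\lim_{t\to 0}\int_{\left\{|\zeta|=1,\left\{|P_k(\zeta)|=\epsilon_k(t)\right\}_{k=1}^m\right\}}
\langle{\bar z}\cdot\zeta\rangle^r\frac{\Phi(\zeta)}{\prod_{k=1}^mP_k(\zeta)}
\wedge\det\left[{\bar z}\ \overbrace{Q(\zeta,z)}^{m}\ \overbrace{d{\bar z}}^{n-m}\right]\wedge\omega(\zeta).
\]
Writing $\Phi=\sum_{\alpha}\vartheta_{\alpha}\Phi_{\alpha}$ and localizing in each chart $U_{\alpha}$, I would pass to the nonhomogeneous coordinates $w^{(\alpha)}$ and run the chain of identities \eqref{LocalIntegralEquality} established inside the proof of Lemma~\ref{ResidueReduction}: integration in $\zeta_0=\rho_0(w)e^{i\phi_0}$ produces the factor $\int_0^{2\pi}d\phi_{\alpha}$ together with the powers of $2\pi$ and $i$, while the remaining inner integral is precisely of the type to which Lemma~\ref{LimitsExistence} applies, so its limit along the admissible path $\epsilon(t)$ is the Coleff--Herrera residue $\mbox{Res}_V$. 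Summing the chart contributions over $\alpha$ gives the second equality in \eqref{LResidue}, with the overall constant $(2\pi)^mi^{m+1}$.

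The point I expect to require the most care is the legitimacy of the truncation, that is, the interchange of the summation over $r$ with the limits $\eta\to 1$, $t\to 0$, and $\tau\to 0$. This is exactly why Lemmas~\ref{ResidueReduction} and \ref{ZeroIntegrals} are proved beforehand: pairing with $\gamma$ and invoking \eqref{ZeroIntegral} term by term converts the problematic infinite sum into a finite one \emph{before} any limit in $\eta$ is taken, so that all subsequent limits act on finitely many terms whose residual limits exist and are real analytic in $z$ by Lemma~\ref{ResidueReduction}. The remaining verifications are routine: the bidegree bookkeeping forcing $|J|=m$, $q=n-m$ (so the determinant carries the displayed block structure), the preservation of the homogeneity-zero property under the residue, and the tracking of the numerical constant into $C(n,m,d,r)$.
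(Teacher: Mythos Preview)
Your proposal is correct and follows exactly the route the paper takes: the paper presents Proposition~\ref{HodgeProjector} without a separate proof, introducing it with the single sentence ``Combining the results of Lemmas~\ref{ResidueReduction} and \ref{ZeroIntegrals} with formula \eqref{LDeterminant} we obtain the following,'' and your write-up simply unpacks that combination---starting from \eqref{NoSeriesRepresentation}, truncating via Lemma~\ref{ZeroIntegrals}, and evaluating the surviving terms chartwise via the computation \eqref{LocalIntegralEquality} inside Lemma~\ref{ResidueReduction}. Your explicit attention to the order-of-limits issue is appropriate and matches how the paper has arranged the lemmas to make the truncation legitimate before the $\eta\to 1$ limit.
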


\section{Estimates for the solution operator.}
\label{IEstimates}

\indent
In this section we analyze the solution operators $I_q^{\epsilon}$, specifically estimates for limits
of those operators as $\epsilon\to 0$. In the estimates below we slightly abuse the notation by
using the same letter $C$ in all estimates for constants that do not depend on $\epsilon$, $\tau$
and $\eta$.\\
\indent
In the next lemma we simplify expression \eqref{IOperator} for
$I_q^{\epsilon}$ by eliminating the first integral in its right-hand side.

\begin{lemma}\label{BochnerZero} Let $V\subset \C\P^n$ be a reduced subvariety
as in \eqref{Variety}, and let $g_{\alpha}$ be an analytic function on
$U_{\alpha}\subset \C\P^n$ as in Theorem~\ref{HodgeRepresentation}.\\
\indent
Then for a fixed $\eta>0$ and an arbitrary $z\in U_{\alpha}$,
such that $|g_{\alpha}(z)|>\eta$, we have the following equality
\begin{equation}\label{ZeroBochner}
\lim_{t\to 0}\int_{U^{\epsilon(t)}\times[0,1]}
\vartheta_{\beta}(\zeta)\Phi(\zeta)
\wedge\omega^{\prime}_{q}\left((1-\lambda)\frac{\bar z}{B^*(\zeta,z)}
+\lambda\frac{\bar\zeta}{B(\zeta,z)}\right)\wedge\omega(\zeta)=0,
\end{equation}
where $\epsilon(t)$ is an admissible path, and $\beta\in (0,\dots,n)$.
\end{lemma}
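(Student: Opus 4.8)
\emph{Proof plan.} The idea is to prove \eqref{ZeroBochner} by a direct size estimate, exploiting that the solid domain $U^{\epsilon(t)}$ of \eqref{Uepsilon} collapses onto $V$ as $t\to0$ while the Bochner--Martinelli--Koppelman kernel is only mildly singular. The first thing I would record is that this kernel is singular \emph{only} on the diagonal: on the unit sphere one has $B^{*}(\zeta,z)=\langle\bar z,\zeta\rangle-1$ and $B(\zeta,z)=1-\langle\bar\zeta,z\rangle$, so by the equality case of the Cauchy--Schwarz inequality both denominators vanish exactly when $\zeta=z$. Consequently, for fixed $z$ the form $\omega^{\prime}_{q}\big((1-\lambda)\bar z/B^{*}+\lambda\bar\zeta/B\big)$ is smooth in $(\zeta,\lambda)$ off the point $\zeta=z$, uniformly for $\lambda\in[0,1]$, and near that point it carries the standard, locally integrable Cauchy--Fantappi\`e singularity. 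Fixing $z$ with $|g_{\alpha}(z)|>\eta$, and using $\vartheta_{\beta}\Phi\in{\cal E}_{c}$, it therefore suffices to bound the modulus of each $d\bar z$-coefficient of the integral and show it tends to $0$.

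I would then split into two cases. If $z\notin V$, then $\mathrm{dist}(z,V)>0$, so for small $t$ the shrinking tube $U^{\epsilon(t)}$ avoids a fixed neighborhood of $z$; there the kernel is bounded and the integral is dominated by $\mathrm{Vol}\big(U^{\epsilon(t)}\big)\lesssim\prod_{k}\epsilon_{k}(t)^{2}\to0$. If $z\in V$, then $|g_{\alpha}(z)|>\eta$ excludes $z\in V^{\prime}_{\alpha}$, so by \eqref{gFunctions}(b) the point $z$ is a smooth point of $V$. Passing to the descent of Proposition~\ref{Formula} and choosing local holomorphic coordinates on $\C\P^{n}$ centered at $z$ that straighten $V$, I write $w=\zeta-z=(w^{\prime},w^{\prime\prime})$ with $w^{\prime}\in\C^{m}$ transverse to $V$ and $w^{\prime\prime}\in\C^{n-m}$ along $V$, so that the descended tube becomes $\{|w_{k}|\lesssim\epsilon_{k}(t)\}$ in the transverse variables. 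Away from $\zeta=z$ the previous volume bound again gives a contribution tending to $0$, so everything reduces to a neighborhood $|w|<\rho$ of the diagonal.

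The heart of the matter is the near-diagonal estimate, where the integrable blow-up of the kernel must be weighed against the collapse of the tube in the $m$ transverse directions. Bounding the kernel by the Cauchy--Fantappi\`e order $|w|^{-(2n-1)}$, I would estimate
\begin{equation*}
\int_{\{|w_{k}|<c\,\epsilon_{k}(t)\}\times\{|w^{\prime\prime}|<\rho\}}
\frac{dV(w^{\prime\prime})\,dV(w^{\prime})}{\big(|w^{\prime}|^{2}+|w^{\prime\prime}|^{2}\big)^{(2n-1)/2}}
\ \lesssim\ \int_{\{|w_{k}|<c\,\epsilon_{k}(t)\}}|w^{\prime}|^{\,1-2m}\,dV(w^{\prime})
\ \lesssim\ \max_{k}\epsilon_{k}(t),
\end{equation*}
where the first inequality is the substitution $w^{\prime\prime}=|w^{\prime}|u$, whose auxiliary integral $\int_{\C^{n-m}}(1+|u|^{2})^{-(2n-1)/2}\,dV(u)$ converges precisely because $m\ge1$, and the second is the integration of $|w^{\prime}|^{1-2m}$ against the radial element $r^{2m-1}\,dr$ over a ball of radius $\sim\max_{k}\epsilon_{k}(t)$. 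Since $\max_{k}\epsilon_{k}(t)\to0$, the near-diagonal contribution vanishes in the limit, and adding the two pieces yields \eqref{ZeroBochner}.

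I expect this anisotropic near-diagonal estimate to be the only genuine obstacle; the rest is bookkeeping. It is worth noting that the argument uses only $\epsilon_{k}(t)\to0$ and not the finer hierarchy \eqref{admissible}, that the cut-off $\vartheta_{\beta}$ and the compact support of $\Phi$ enter merely as bounded factors, and that the hypothesis $|g_{\alpha}(z)|>\eta$ is used solely to place $z$ on the smooth locus of $V$ so that the transverse/tangential splitting of coordinates is available.
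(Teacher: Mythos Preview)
Your approach is correct and essentially the same as the paper's: both split the integral into a near-diagonal piece $\{|\zeta-z|<\tau\}$ and a far piece, dispose of the far piece by the trivial volume bound $\mathrm{Vol}(U^{\epsilon(t)})\to 0$, and handle the near piece by a direct size estimate that pits the integrable Bochner--Martinelli singularity against the collapse of the tube in the $m$ transverse directions, obtaining a bound $\lesssim \max_k\epsilon_k(t)$. The only cosmetic difference is that the paper works on $\mathbb{S}^{2n+1}$ with the anisotropic coordinates $t=\mathrm{Im}\,B$, $\rho_k=|P_k(\zeta)|$, $r$ and the denominator $(t+\sum\rho_k^2+r^2)^{n+1}$, whereas you descend to $\mathbb{CP}^n$ and use the isotropic Cauchy--Fantappi\`e bound $|w|^{-(2n-1)}$; both computations yield the same $O(\epsilon)$ estimate.
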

\begin{proof} For a fixed $z$ with $|g_{\alpha}(z)|>\eta$ we choose $\tau>0$ so that for
$|\zeta-z|<\tau$ we have $|g_{\alpha}(\zeta)|>\eta/2$. Then we represent the integral
in \eqref{ZeroBochner} as
\begin{multline}\label{PartitionIntegral}
\int_{U^{\epsilon}\times[0,1]}
\vartheta_{\beta}(\zeta)\Phi(\zeta)
\wedge\omega^{\prime}_{q}\left((1-\lambda)\frac{\bar z}{B^*(\zeta,z)}
+\lambda\frac{\bar\zeta}{B(\zeta,z)}\right)\wedge\omega(\zeta)\\
=\int_{\left(U^{\epsilon}\cap\left\{|\zeta-z|<\tau\right\}\right)\times[0,1]}
\vartheta_{\beta}(\zeta)\Phi(\zeta)
\wedge\omega^{\prime}_{q}\left((1-\lambda)\frac{\bar z}{B^*(\zeta,z)}
+\lambda\frac{\bar\zeta}{B(\zeta,z)}\right)\wedge\omega(\zeta)\\
+\int_{\left(U^{\epsilon}\cap\left\{|\zeta-z|>\tau\right\}\right)\times[0,1]}
\vartheta_{\beta}(\zeta)\Phi(\zeta)
\wedge\omega^{\prime}_{q}\left((1-\lambda)\frac{\bar z}{B^*(\zeta,z)}
+\lambda\frac{\bar\zeta}{B(\zeta,z)}\right)\wedge\omega(\zeta).
\end{multline}
\indent
To estimate the first integral in the right-hand side of \eqref{PartitionIntegral}
we introduce the coordinates
\begin{equation}\label{SmallCoordinates}
\left\{\begin{aligned}
&t=\text{Im}B(\zeta,z)=\text{Im}B^*(\zeta,z),\\
&\rho_k=\left|P_k(\zeta)\right|\ \text{for}\ k=1\dots, m,
\end{aligned}\right.
\end{equation}
and obtain the following estimate
\begin{multline}\label{FirstBochner}
\left|\int_{\left(U^{\epsilon}\cap\left\{|\zeta-z|<\tau\right\}\right)\times[0,1]}
\vartheta_{\beta}(\zeta)\Phi(\zeta)
\wedge\omega^{\prime}_{q-1}\left((1-\lambda)\frac{\bar z}{B^*(\zeta,z)}
+\lambda\frac{\bar\zeta}{B(\zeta,z)}\right)\wedge\omega(\zeta)\right|\\
\leq C\cdot\int_0^{\tau}dt\int_0^{\epsilon}\rho_1 d\rho_1\cdots
\int_0^{\epsilon}\rho_m d\rho_m
\int_0^{\tau}\frac{r^{2(n-m)}dr}{\left(t+\sum_{k=1}^m\rho_k^2+r^2\right)^{n+1}}\\
\leq C\cdot\int_0^{\tau}dt\int_0^{\epsilon}\rho_1 d\rho_1\cdots \int_0^{\epsilon}\rho_m d\rho_m
\int_0^{\tau}\frac{dr}{\left(t+\sum_{k=1}^m\rho_k^2+r^2\right)^{m+1}}\\
\leq C\cdot\int_0^{\epsilon}\rho_1 d\rho_1\int_0^{\tau}dt
\int_0^{\tau}\frac{dr}{\left(t+\rho_1^2+r^2\right)^{2}}\\
\leq C\cdot\int_0^{\epsilon}d\rho_1\int_0^{\tau}\frac{\rho_1dr}{\rho_1^2+r^2}
\leq C\cdot\int_0^{\epsilon}d\rho_1\int_0^{\infty}\frac{du}{1+u^2}\leq C\cdot\epsilon\
\rightarrow\ 0
\end{multline}
as $\epsilon\to 0$.\\
\indent
For the second integral in the right-hand side of \eqref{PartitionIntegral} we have
\begin{equation}\label{SecondBochner}
\left|\int_{\left(U^{\epsilon}\cap\left\{|\zeta-z|>\tau\right\}\right)\times[0,1]}
\vartheta_{\beta}(\zeta)\Phi(\zeta)
\wedge\omega^{\prime}_{q}\left((1-\lambda)\frac{\bar z}{B^*(\zeta,z)}
+\lambda\frac{\bar\zeta}{B(\zeta,z)}\right)\wedge\omega(\zeta)\right|\rightarrow\ 0
\end{equation}
as $\epsilon\to 0$ because the integrand in \eqref{SecondBochner} is uniformly bounded for
$\left\{\zeta:\ |\zeta - z|>\tau\right\}$,
and the volume of $U^{\epsilon}$ goes to zero as $\epsilon\to 0$.\\
\indent
Combining estimates \eqref{FirstBochner} and \eqref{SecondBochner} we obtain the statement
of the Lemma.
\end{proof}

\indent
To estimate the rest of the integrals in \eqref{IOperator} we transform those integrals
by integrating the kernels with respect to variables $\lambda,\ \mu_j \in \Delta_J$ for $j\in J$
and obtain
\begin{multline}\label{IDeterminants}
\sum_{|J|\geq 1}\int_{\Gamma^{\epsilon}_J\times\Delta_J}
\Phi(\zeta)\wedge \omega^{\prime}_{q-1}\left((1-\lambda-\sum_{k=1}^m\mu_k)
\frac{\bar z}{B^*(\zeta,z)}
+\lambda\frac{\bar\zeta}{B(\zeta,z)}
+\sum_{j\in J}\mu_j\frac{Q_j(\zeta,z)}{P_j(\zeta)-P_j(z)}\right)
\wedge\omega(\zeta)\\
=\sum_{|J|\geq 1}C(n,q,|J|)\int_{\Gamma^{\epsilon}_J}\Phi(\zeta)
\bigwedge\det\left[\frac{\bar z}{B^*(\zeta,z)}\ \frac{\bar\zeta}{B(\zeta,z)}\
\overbrace{\frac{Q_j(\zeta,z)}{P_j(\zeta)-P_j(z)}}^{|J|}\
\overbrace{\frac{d{\bar z}}{B^*(\zeta,z)}}^{q-1}\
\overbrace{\frac{d{\bar\zeta}}{B(\zeta,z)}}^{n-|J|-q}\right]
\wedge\omega(\zeta).
\end{multline}

\indent
Then we further transform the integrals in the right-hand side of \eqref{IDeterminants}
using series representation \eqref{QSeries}
and obtain
\begin{multline}\label{SolutionIntegrals}
\int_{\Gamma^{\epsilon}_J}\Phi(\zeta)
\wedge\det\left[\frac{\bar z}{B^*(\zeta,z)}\ \frac{\bar\zeta}{B(\zeta,z)}\
\overbrace{\frac{Q_j(\zeta,z)}{P_j(\zeta)-P_j(z)}}^{|J|}\
\overbrace{\frac{d{\bar z}}{B^*(\zeta,z)}}^{q-1}\
\overbrace{\frac{d{\bar\zeta}}{B(\zeta,z)}}^{n-|J|-q}\right]
\wedge\omega(\zeta)\\
=\sum_{|A|\geq 0}
\int_{\Gamma^{\epsilon}_J}
\frac{\Phi(\zeta)}{\prod_{j\in J}P_j(\zeta)}
\cdot\frac{P^A(z)}{P^A(\zeta)}
\wedge\det\left[\frac{\bar z}{B^*(\zeta,z)}\ \frac{\bar\zeta}{B(\zeta,z)}\
\overbrace{Q_j(\zeta,z)}^{|J|}\
\overbrace{\frac{d{\bar z}}{B^*(\zeta,z)}}^{q-1}\
\overbrace{\frac{d{\bar\zeta}}{B(\zeta,z)}}^{n-|J|-q}\right]\wedge\omega(\zeta),
\end{multline}
where we assume that $J=(j_1,\dots,j_p)$, denote by $A=(a_1,\dots,a_p)$ a multiindex, by
$$P^A(\zeta)=P_{j_1}^{a_1}(\zeta)\cdots P_{j_p}^{a_p}(\zeta),$$
and by $|A|=a_1+\cdots+a_p$.\\
\indent
As before in \eqref{SeriesRepresentation}, using Theorem 1.7.6(2) from \cite{CH} (see also \cite{HP3} Prop. 2.3)
we obtain that the residual currents defined by the terms
in the right-hand side of \eqref{SolutionIntegrals} with $|A|\geq 1$
are zero-currents from the point of view of \eqref{preL-Operator}, and therefore
we can simplify formula \eqref{IOperator} for $I_q^{\epsilon(t)}\left[\Phi\right]$ as follows
\begin{multline}\label{I-Sum-of-Determinants}
I_q^{\epsilon(t)}\left[\Phi\right](z)
=\sum_{|J|\geq 1}C(n,q,|J|)\int_{\Gamma^{\epsilon(t)}_J}
\frac{\Phi(\zeta)}{\prod_{j\in J}P_j(\zeta)}\\
\bigwedge\det\left[\frac{\bar z}{B^*(\zeta,z)}\ \frac{\bar\zeta}{B(\zeta,z)}\
\overbrace{Q_j(\zeta,z)}^{|J|}\
\overbrace{\frac{d{\bar z}}{B^*(\zeta,z)}}^{q-1}\
\overbrace{\frac{d{\bar\zeta}}{B(\zeta,z)}}^{n-|J|-q}\right]\wedge\omega(\zeta).
\end{multline}

\indent
In the next lemma we further simplify formula \eqref{I-Sum-of-Determinants} for $I_q^{\epsilon(t)}$.

\begin{lemma}\label{ZeroTerms} Let $V\subset \C\P^n$ be a reduced subvariety
as in \eqref{Variety}, and let $g_{\alpha}$ be an analytic function on
$U_{\alpha}\subset \C\P^n$ as in Theorem~\ref{HodgeRepresentation}.\\
\indent
Then for a fixed $\eta>0$, an arbitrary $z\in U_{\alpha}$,
such that $|g_{\alpha}(z)|>\eta$, and $J$, such that $|J|=p<m$ we have
\begin{equation}\label{ZeroDeterminants}
\lim_{t\to 0}\int_{\Gamma^{\epsilon(t)}_J}
\frac{\vartheta_{\beta}(\zeta)\Phi(\zeta)}{\prod_{j\in J}P_j(\zeta)}
\bigwedge\det\left[\frac{\bar z}{B^*(\zeta,z)}\ \frac{\bar\zeta}{B(\zeta,z)}\
\overbrace{Q_j(\zeta,z)}^{|J|}\
\overbrace{\frac{d{\bar z}}{B^*(\zeta,z)}}^{q-1}\
\overbrace{\frac{d{\bar\zeta}}{B(\zeta,z)}}^{n-|J|-q}\right]\wedge\omega(\zeta)=0,
\end{equation}
for an admissible path $\epsilon(t)$, and $\beta\in (0,\dots,n)$.
\end{lemma}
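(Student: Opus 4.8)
The plan is to bound the modulus of the integral in \eqref{ZeroDeterminants} by a quantity of order $\prod_{k\notin J}\epsilon_k(t)^2$, which tends to $0$ as $t\to 0$ precisely because $|J|=p<m$ leaves at least one ``loose'' index $k\notin J$. The decisive structural fact is the one already exploited in passing from \eqref{SolutionIntegrals} to \eqref{I-Sum-of-Determinants}: the only pole of the integrand is along $\{\prod_{j\in J}P_j=0\}$, and no factor $P_k^{-1}$ with $k\notin J$ survives. Hence transversally to each $\{P_k=0\}$, $k\notin J$, the integrand is smooth, while $\Gamma^{\epsilon(t)}_J$ confines those directions to the shrinking discs $\{|P_k|<\epsilon_k(t)\}$. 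As in the proof of Lemma~\ref{BochnerZero}, I would first choose $\tau>0$ with $|g_\alpha(\zeta)|>\eta/2$ on $\{|\zeta-z|<\tau\}$ (possible since $|g_\alpha(z)|>\eta$ by continuity), so that a full neighborhood of $z$ lies in the smooth locus $(V\cap U_\alpha)\setminus V'_\alpha$ where $F_1,\dots,F_m$ have independent differentials and may serve as part of a coordinate system, and then split $\Gamma^{\epsilon(t)}_J$ into the parts $\{|\zeta-z|>\tau\}$ and $\{|\zeta-z|<\tau\}$ exactly as in \eqref{PartitionIntegral}.

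On the far part $\{|\zeta-z|>\tau\}$ the kernels $B(\zeta,z)$ and $B^*(\zeta,z)$ are bounded away from $0$ (on $\*S^{2n+1}(1)$ they vanish only at $\zeta=z$), so on the support of $\vartheta_\beta\Phi$ the only unbounded factor is the explicit pole, giving the pointwise bound $C\prod_{j\in J}\epsilon_j(t)^{-1}$. On the other hand $\Gamma^{\epsilon(t)}_J$ is, near $V$, a circle of radius $\sim\epsilon_j(t)$ in each tight direction $j\in J$ and a disc of radius $\sim\epsilon_k(t)$ in each loose direction $k\notin J$, so by an estimate of the type \eqref{VolumeEstimate} its $(2n+1-p)$-dimensional volume is $O\!\left(\prod_{j\in J}\epsilon_j(t)\cdot\prod_{k\notin J}\epsilon_k(t)^2\right)$. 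Multiplying, the $\epsilon_j(t)^{-1}$ from the pole cancels the $\epsilon_j(t)$ supplied by each tube --- this cancellation is nothing but the residue in the tight directions --- and the far part is $O\!\left(\prod_{k\notin J}\epsilon_k(t)^2\right)\to 0$.

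On the near part $\{|\zeta-z|<\tau\}$ (which is empty for small $\epsilon$ when $z\notin V$, so only the case $z\in V$ smooth matters) I would repeat the computation \eqref{FirstBochner}: pass to the affine chart through the homogenization \eqref{Polynomials} of Lemma~\ref{ResidueReduction}, which peels off a bounded factor $\int_0^{2\pi}d\phi_0$, and use coordinates of the type \eqref{SmallCoordinates}, with $\rho_j=|P_j|=\epsilon_j(t)$ frozen on the tight tubes and $\rho_k=|P_k|$ running in $(0,\epsilon_k(t))$ on the loose discs. Freezing $\rho_j=\epsilon_j(t)$ keeps the denominator $(t+\sum\rho_k^2+r^2)$ bounded below, while the form degree supplies the vanishing factor $r^{2(n-m)}$, $r=|\zeta-z|$, that in \eqref{FirstBochner} reduced the exponent $n+1$ to $m+1$; the tight directions then contribute only bounded angular integrals $\oint dP_j/P_j$, and the resulting iterated integral is again dominated by a constant times $\prod_{k\notin J}\int_0^{\epsilon_k(t)}\rho_k\,d\rho_k=O\!\left(\prod_{k\notin J}\epsilon_k(t)^2\right)$. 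Combining the two parts yields \eqref{ZeroDeterminants}.

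The main obstacle is this near-diagonal estimate. One must arrange the adapted real coordinates so that on $\Gamma^{\epsilon(t)}_J$ each tight direction $j\in J$ collapses to a single angular differential $d\theta_j$ (so that $P_j^{-1}$ is absorbed into a bounded residue and never interacts with the Bochner--Martinelli pole), while the required antiholomorphic differentials $d\bar\zeta$ are forced onto the loose and the tangential directions, leaving each loose disc to carry the factor $dP_k\wedge d\bar P_k$ of area order $\epsilon_k(t)^2$. Keeping this differential-form bookkeeping consistent with the singularity count of the kernel --- exactly the balance between $r^{2(n-m)}$ and $(t+\sum\rho_k^2+r^2)^{n+1}$ in \eqref{FirstBochner} --- is the only delicate point; everything else reduces to a volume estimate that vanishes because $p<m$ makes $J^{c}$ nonempty.
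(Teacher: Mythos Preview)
Your near/far split and your treatment of the near part are exactly what the paper does: the paper chooses $\tau$ so that $|g_\alpha|>\eta/2$ on $\{|\zeta-z|<\tau\}$ and then carries out precisely the iterated estimate you describe, arriving at \eqref{FirstZeroBochner}, which is the $|J|=p<m$ analogue of \eqref{FirstBochner}.

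The difference is in the far part. The paper does \emph{not} bound the integral over $\Gamma^{\epsilon(t)}_J\cap\{|\zeta-z|>\tau\}$ by a pointwise bound on the integrand times a global volume estimate for $\Gamma^{\epsilon(t)}_J$. Instead it rewrites the integral in the affine chart, invokes the Coleff--Herrera machinery (via Lemma~\ref{LimitsExistence}) to replace the limit by its principal value over $\{|g_\beta(\zeta)|>\eta\}$, and only then, on the smooth locus, localizes to a chart where $V=\{u_1=\cdots=u_m=0\}$ and the integral becomes $\int f/\prod_{j\in J}u_j$ over $\{|u_j|\chi_j=\epsilon_j,\ j\in J;\ |u_k|\chi_k<\epsilon_k,\ k\notin J\}$, which vanishes in the limit for the reason you give (bounded residues in the tight directions, shrinking discs in the loose ones).

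Your shortcut ``pointwise bound $C\prod_{j\in J}\epsilon_j^{-1}$ times $\mathrm{vol}(\Gamma^{\epsilon(t)}_J)=O(\prod_{j\in J}\epsilon_j\cdot\prod_{k\notin J}\epsilon_k^2)$'' is correct near smooth points of $V$, but it is not justified near the singular locus of $V$, which may well meet the far region $\{|\zeta-z|>\tau\}$. The reference to \eqref{VolumeEstimate} does not help: that estimate compares volumes on spheres of different radii, not volumes of tubes around a possibly singular variety. The admissible-path condition and the Coleff--Herrera theory are in the paper precisely because such naive volume bounds can fail at singularities; the paper's reduction to $\{|g_\beta|>\eta\}$ is what makes your heuristic rigorous. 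So your argument is morally the same as the paper's, but the far-part step as written has a genuine gap that the paper closes by first excising the singular set of $V$ via Lemma~\ref{LimitsExistence} before doing the local volume count.
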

\begin{proof}  For a fixed $z$ with $|g_{\alpha}(z)|>\eta$ we choose $\tau>0$ as in
Lemma~\ref{BochnerZero}, so that for
$|\zeta-z|<\tau$ we have $|g_{\alpha}(\zeta)|>\eta/2$.  Then we represent the integral
in \eqref{ZeroDeterminants} as
\begin{multline}\label{ZeroBochnerPartitioned}
\int_{\Gamma^{\epsilon(t)}_J}
\frac{\vartheta_{\beta}(\zeta)\Phi(\zeta)}{\prod_{j\in J}P_j(\zeta)}
\bigwedge\det\left[\frac{\bar z}{B^*(\zeta,z)}\ \frac{\bar\zeta}{B(\zeta,z)}\
\overbrace{Q_j(\zeta,z)}^{|J|}\
\overbrace{\frac{d{\bar z}}{B^*(\zeta,z)}}^{q-1}\
\overbrace{\frac{d{\bar\zeta}}{B(\zeta,z)}}^{n-|J|-q}\right]\wedge\omega(\zeta)\\
=\int_{\Gamma^{\epsilon(t)}_J\cap\left\{|\zeta-z|<\tau\right\}}
\frac{\vartheta_{\beta}(\zeta)\Phi(\zeta)}{\prod_{j\in J}P_j(\zeta)}
\bigwedge\det\left[\frac{\bar z}{B^*(\zeta,z)}\ \frac{\bar\zeta}{B(\zeta,z)}\
\overbrace{Q_j(\zeta,z)}^{|J|}\
\overbrace{\frac{d{\bar z}}{B^*(\zeta,z)}}^{q-1}\
\overbrace{\frac{d{\bar\zeta}}{B(\zeta,z)}}^{n-|J|-q}\right]\wedge\omega(\zeta)\\
+\int_{\Gamma^{\epsilon(t)}_J\cap\left\{|\zeta-z|>\tau\right\}}
\frac{\vartheta_{\beta}(\zeta)\Phi(\zeta)}{\prod_{j\in J}P_j(\zeta)}
\bigwedge\det\left[\frac{\bar z}{B^*(\zeta,z)}\ \frac{\bar\zeta}{B(\zeta,z)}\
\overbrace{Q_j(\zeta,z)}^{|J|}\
\overbrace{\frac{d{\bar z}}{B^*(\zeta,z)}}^{q-1}\
\overbrace{\frac{d{\bar\zeta}}{B(\zeta,z)}}^{n-|J|-q}\right]\wedge\omega(\zeta).
\end{multline}
For the first integral in the right-hand side of \eqref{ZeroBochnerPartitioned} 
using the coordinates from \eqref{SmallCoordinates} and estimate
$$\left|{\bar z}\wedge \bar{\zeta}\right|\leq C\cdot |\zeta-z|$$
we obtain
\begin{multline}\label{FirstZeroBochner}
\left|\int_{\Gamma^{\epsilon(t)}_J\cap\left\{|\zeta-z|<\tau\right\}}
\frac{\vartheta_{\beta}(\zeta)\Phi(\zeta)}{\prod_{j\in J}P_j(\zeta)}
\bigwedge\det\left[\frac{\bar z}{B^*(\zeta,z)}\ \frac{\bar\zeta}{B(\zeta,z)}\
\overbrace{Q_j(\zeta,z)}^{|J|}\
\overbrace{\frac{d{\bar z}}{B^*(\zeta,z)}}^{q-1}\
\overbrace{\frac{d{\bar\zeta}}{B(\zeta,z)}}^{n-|J|-q}\right]\wedge\omega(\zeta)\right|\\
\leq C\cdot\int_0^{\tau}dt\int_0^{2\pi} d\phi_{j_1}\cdots\int_0^{2\pi} d\phi_{j_p}
\int_0^{\epsilon}\rho_1 d\rho_1\cdots\int_0^{\epsilon}\rho_{m-p}d\rho_{m-p}\\
\times\int_0^{\tau}\frac{r^{2(n-m)}dr}
{\left(t+\sum_{k=1}^{m-p}\rho_k^2+r^2\right)^{n-p+1}}\\
\leq C\cdot\int_0^{\tau}dt
\int_0^{\epsilon}\rho_1 d\rho_1\cdots\int_0^{\epsilon}\rho_{m-p}d\rho_{m-p}
\int_0^{\tau}\frac{dr}{\left(t+\sum_{k=1}^{m-p}\rho_k^2+r^2\right)^{m-p+1}}\\
\leq C\cdot\int_0^{\tau}dt\int_0^{\epsilon}\rho_1 d\rho_1
\int_0^{\tau}\frac{dr}{\left(t+\rho_1^2+r^2\right)^2}\\
\leq C\cdot\int_0^{\epsilon}\rho_1 d\rho_1
\int_0^{\tau}\frac{dr}{\left(\rho_1^2+r^2\right)}
\leq C\cdot\int_0^{\epsilon}\rho_1 d\rho_1
\int_0^{\tau}\frac{dr}{\rho_1^2\left(1+(r/\rho_1)^2\right)}\\
\leq C\cdot\int_0^{\epsilon}d\rho_1\int_0^{\infty}\frac{du}{1+u^2}
\leq C\cdot\epsilon \rightarrow 0,
\end{multline}
as $\epsilon\to 0$.\\
\indent
For the second integral in \eqref{ZeroBochnerPartitioned} we may assume without loss of generality
that $\beta=0$ and $\Phi$ is a smooth differential form with support in $\left\{|\zeta-z|>\tau\right\}$,
and rewrite this integral using polynomials $\left\{F_k\right\}_1^m$ from \eqref{Polynomials} as
\begin{multline*}
\int_{\Gamma^{\epsilon(t)}_J\cap\left\{|\zeta-z|>\tau\right\}}
\frac{\vartheta_{\beta}(\zeta)\Phi(\zeta)}{\prod_{j\in J}P_j(\zeta)}
\bigwedge\det\left[\frac{\bar z}{B^*(\zeta,z)}\ \frac{\bar\zeta}{B(\zeta,z)}\
\overbrace{Q_j(\zeta,z)}^{|J|}\
\overbrace{\frac{d{\bar z}}{B^*(\zeta,z)}}^{q-1}\
\overbrace{\frac{d{\bar\zeta}}{B(\zeta,z)}}^{n-|J|-q}\right]\wedge\omega(\zeta)\\
=\lim_{t\to 0}\int_{\footnotesize\left\{\begin{array}{ll}
|F_j(\zeta)|\cdot\chi_j(\zeta)=\epsilon_j(t)\ \text{for}\ j\in J,\\
|F_k(\zeta)|\cdot\chi_k(\zeta)<\epsilon_k(t)\ \text{for}\ k\notin J
\end{array}\right\}}
\frac{\Psi(\zeta,z)}{\prod_{j\in J}F_j(\zeta)}
\end{multline*}
with a smooth form $\Psi(\zeta,z)$ analytically depending on $(z, {\bar z})$ for
$z\in\left\{U_{\alpha}:\ |g_{\alpha}(z)|>\eta\right\}$ and compact support
in $\left\{|\zeta-z|>\tau\right\}$.\\
\indent
Then as in Lemma~\ref{LimitsExistence} we obtain equality
\begin{multline*}
\lim_{t\to 0}\int_{\footnotesize\left\{\begin{array}{ll}
|F_j(\zeta)|\cdot\chi_j(\zeta)=\epsilon_j(t)\ \text{for}\ j\in J,\\
|F_k(\zeta)|\cdot\chi_k(\zeta)<\epsilon_k(t)\ \text{for}\ k\notin J
\end{array}\right\}}
\frac{\Psi(\zeta,z)}{\prod_{j\in J}F_j(\zeta)}\\
=\lim_{\eta\to 0}\lim_{t\to 0}
\int_{\footnotesize\left\{|g_{\beta}(\zeta)|>\eta,\ \begin{array}{ll}
|F_j(\zeta)|\cdot\chi_j(\zeta)=\epsilon_j(t)\ \text{for}\ j\in J,\\
|F_k(\zeta)|\cdot\chi_k(\zeta)<\epsilon_k(t)\ \text{for}\ k\notin J
\end{array}\right\}}
\frac{\Psi(\zeta,z)}{\prod_{j\in J}F_j(\zeta)}
\end{multline*}
reducing the proof of \eqref{ZeroDeterminants} to the proof of equality
\begin{equation}\label{FiberZero}
\lim_{t\to 0}\int_{\footnotesize\left\{|g_{\beta}(\zeta)|>\eta,\ \begin{array}{ll}
|F_j(\zeta)|\cdot\chi_j(\zeta)=\epsilon_j(t)\ \text{for}\ j\in J,\\
|F_k(\zeta)|\cdot\chi_k(\zeta)<\epsilon_k(t)\ \text{for}\ k\notin J
\end{array}\right\}}
\frac{\Psi(\zeta,z)}{\prod_{j\in J}F_j(\zeta)}=0.
\end{equation}
\indent
In the proof of \eqref{FiberZero} we use the method used in Lemma 2.3 in \cite{HP4}. Namely,
localizing the problem we assume that the set $V\cap\left\{|g_{\beta}(z)|>\eta\right\}$
is a submanifold in a polydisk ${\cal P}^n$ of the form
$$S=\left\{u\in{\cal P}^n:\ u_1=\cdots=u_m=0\right\},$$
and the integral in \eqref{FiberZero} can be represented as
\begin{equation*}
\lim_{t\to 0}\int_{\footnotesize\left\{\begin{array}{ll}
|u_j|\cdot\chi_j(u)=\epsilon_j(t)\ \text{for}\ j\in J,\\
|u_k|\cdot\chi_k(u)<\epsilon_k(t)\ \text{for}\ k\notin J
\end{array}\right\}}
\frac{f(u,z)}{\prod_{j\in J}u_j}=0.
\end{equation*}
\end{proof}

\indent
In the next lemma we obtain an explicit form of a solution operator for $\bar\partial$-equation
on residual currents.

\begin{lemma}\label{BochnerIntegrals} Let $V\subset \C\P^n$ be a reduced
subvariety as in \eqref{Variety}, and let $g_{\alpha}$ be an analytic function on $U_{\alpha}\subset \C\P^n$ as in
Theorem~\ref{HodgeRepresentation}.\\
\indent
Then for a fixed $\eta>0$, $J=(1,\dots,m)$, an admissible path $\epsilon(t)$,
and $\beta\in(0,\dots,n)$ we have:
\begin{itemize}
\item[(i)] the limits of integrals
\begin{equation}\label{BochnerDefined}
\lim_{t\to 0}\int_{\Gamma^{\epsilon(t)}_J}
\frac{\vartheta_{\beta}(\zeta)\Phi(\zeta)}{\prod_{k=1}^mP_k(\zeta)}
\bigwedge\det\left[\frac{\bar z}{B^*(\zeta,z)}\ \frac{\bar\zeta}{B(\zeta,z)}\
\overbrace{Q(\zeta,z)}^{m}\
\overbrace{\frac{d{\bar z}}{B^*(\zeta,z)}}^{q-1}\
\overbrace{\frac{d{\bar\zeta}}{B(\zeta,z)}}^{n-m-q}\right]\wedge\omega(\zeta)
\end{equation}
are well defined continuous functions on $\left\{U_{\alpha}:\ |g_{\alpha}(z)|>\eta\right\}$,
\item[(ii)]
if
\begin{equation}\label{PhiClosed}
\Phi(\zeta)\Big|_{U_{\beta}}=\sum_{k=1}^m F^{(\beta)}_k(\zeta)\cdot \Psi_k(\zeta),
\end{equation}
where $\left\{F^{(\beta)}_k\right\}_1^m$ are the polynomials from \eqref{Polynomials},
then the limit in \eqref{BochnerDefined} is equal to zero.
\end{itemize}
\end{lemma}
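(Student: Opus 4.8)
The plan is to reduce the torus integral \eqref{BochnerDefined} to a residual Bochner--Martinelli--Koppelman integral over the smooth locus of $V$, and then to combine the residue machinery of Lemma~\ref{LimitsExistence} with the classical integrability of the Bochner--Martinelli kernel. Throughout I would localize by the factor $\vartheta_{\beta}$ and assume $\beta=0$, passing to the nonhomogeneous coordinates $w_i=\zeta_i/\zeta_0$ and performing the coordinate reduction from the proof of Lemma~\ref{ResidueReduction} (formulas \eqref{Nonhomogeneous}--\eqref{LocalIntegralEquality}); this rewrites the integral over $\Gamma^{\epsilon(t)}_{(1,\dots,m)}$ as a $\phi_0$-integral composed with an inner residual integral over the deformed admissible tube $\{|F_k(w)|\chi_k(w)=\epsilon_k(t)\}_{k=1}^m$, where $F_k$ are the polynomials \eqref{Polynomials}. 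The only feature absent from Lemma~\ref{ResidueReduction} is the presence of the denominators $B^*(\zeta,z)$ and $B(\zeta,z)$, whose sole zeros on $\*S^{2n+1}(1)$ occur at $\zeta=z$.

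First I would fix $z$ with $|g_0(z)|>\eta$ and choose $\tau>0$ as in Lemmas~\ref{BochnerZero} and \ref{ZeroTerms}, so that $|g_0(\zeta)|>\eta/2$ on $\{|\zeta-z|<\tau\}$, and split the integral accordingly. On the far piece $\{|\zeta-z|>\tau\}$ the kernel is smooth in $\zeta$ and depends real-analytically on $(z,\bar z)$, and $\chi_k\ge 1$ on the relevant region; hence Lemma~\ref{LimitsExistence}, with $(z,\bar z)$ playing the role of the real-analytic parameters $u$, supplies both the existence of the residual limit and its real-analytic dependence on $z$. On the near piece $\{|\zeta-z|<\tau\}$ the set $V$ is a submanifold, so the residue reduction of Lemma~\ref{LimitsExistence} applies: the collapse of the tube produces the pullback to $V$ of the integrand times $\det^{-1}[\partial F_k/\partial\zeta_l]$, integrated over $V\cap\{|g_0|>\eta/2\}$ as in \eqref{FiberedEquality}. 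The resulting density inherits from $B^*,B$ an integrable Bochner--Martinelli-type singularity at $\zeta=z$ along the $(n-m)$-dimensional manifold $V$, so this piece converges and defines a continuous function of $z$. Combining the two pieces yields assertion (i); the near piece is merely continuous rather than real-analytic, which is exactly why (i) claims only continuity.

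For assertion (ii), suppose $\Phi|_{U_{\beta}}=\sum_{k=1}^m F^{(\beta)}_k\Psi_k$ as in \eqref{PhiClosed}. Then in each of the two pieces the numerator of the residual integrand lies in the ideal generated by $F_1,\dots,F_m$, so after dividing by $\prod_k F_k$ every term retains at least one factor $F_k$ absent from the denominator. By Theorem 1.7.6(2) of \cite{CH} --- precisely the mechanism already used for the vanishing statements in Lemmas~\ref{ResidueReduction} and \ref{ZeroTerms} --- the associated Coleff--Herrera residue vanishes; applied to the far piece through Lemma~\ref{LimitsExistence} and to the near piece through the residue over the smooth locus, this forces the limit in \eqref{BochnerDefined} to be zero.

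The main obstacle is the near-diagonal piece, where the Bochner--Martinelli--Koppelman singularities $1/B^*(\zeta,z)$ and $1/B(\zeta,z)$ must coexist with the residue collapse onto $V$. The delicate point is to verify that after taking the Coleff--Herrera residue the surviving density keeps only an integrable singularity of Bochner--Martinelli order along the $(n-m)$-dimensional manifold $V$, so that the subsequent integration in $z$ converges to a genuinely continuous function; controlling this interaction between the residue limit and the kernel singularity is the crux of the argument, whereas the far piece and the vanishing statement (ii) follow routinely from the already established Lemmas~\ref{LimitsExistence}, \ref{ResidueReduction} and \ref{ZeroTerms}.
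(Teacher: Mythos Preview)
Your overall strategy --- split the integral into near- and far-diagonal pieces, handle the far piece by Lemma~\ref{LimitsExistence}, and argue separately on the near piece --- is close in spirit to the paper, but there is a genuine gap in your treatment of the near piece, and the paper's actual argument differs from yours in exactly the way needed to close that gap.

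The problem is your appeal to Lemma~\ref{LimitsExistence} on $\{|\zeta-z|<\tau\}$. That lemma is stated for a form $\Phi(\zeta,u)\in{\cal E}_c^{(n,n-m)}(\C^n)$, i.e.\ a \emph{smooth} form, real-analytic in the parameters. On your near piece the integrand carries the factors $B^*(\zeta,z)^{-q}$ and $B(\zeta,z)^{-(n-m-q+1)}$, which are singular precisely at $\zeta=z$; when $z$ lies on the smooth locus of $V$ (the interesting case), this singularity sits on the limit set of the tube, and neither the fibered-residue identity \eqref{FiberedEquality} nor Theorem~1.7.2 of \cite{CH} applies. So your sentence ``the residue reduction of Lemma~\ref{LimitsExistence} applies'' is not justified, and the collapse you describe is exactly the step that needs proof. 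The same defect propagates to your argument for (ii): you cannot invoke Theorem~1.7.6(2) of \cite{CH} on the near piece until you have first made sense of the residual limit there. A further, smaller issue: even granting your residue picture, ``integrable Bochner--Martinelli singularity'' gives existence of the integral but not continuity in $z$; a modulus-of-continuity estimate is still required.

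The paper avoids this difficulty by using a \emph{moving} cut at $\sqrt{\epsilon}$ rather than a fixed $\tau$. On $\{|\zeta-z|<\sqrt{\epsilon}\}$ a direct volume/order estimate (the computation \eqref{FirstBochnerBounded}) shows that the near contribution is $O(\epsilon^{1/4})$ and hence tends to zero, with no appeal to residue theory at all; this disposes of the singular kernel entirely. On $\{|\zeta-z|>\sqrt{\epsilon}\}$ the kernel is smooth, so Lemma~\ref{LimitsExistence} is legitimately available (this is how (ii) is obtained on the far piece), and continuity in $z$ is established by a direct estimate of $K(\zeta,z^{(1)})-K(\zeta,z^{(2)})$ for $|z^{(1)}-z^{(2)}|<\epsilon/4$, yielding a bound $C\sqrt{\epsilon}$ (the computation \eqref{BigB}). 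In short, the paper trades your ``residue $+$ singular kernel'' interaction for two elementary integral estimates; if you tried to make your near-piece argument rigorous, you would end up reproducing precisely the $O(\epsilon^{1/4})$ estimate that the paper uses.
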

\begin{proof} For a fixed $z$ with $|g_{\alpha}(z)|>\eta$ we choose $\tau>0$ so that for
$|\zeta-z|<\tau$ we have $|g_{\alpha}(\zeta)|>\eta/2$. Then for $\epsilon<\tau$
we represent the integral in \eqref{BochnerDefined} as
\begin{multline}\label{BochnerPartitioned}
\int_{\Gamma^{\epsilon}_J}
\frac{\vartheta_{\beta}(\zeta)\Phi(\zeta)}{\prod_{k=1}P_k(\zeta)}
\bigwedge\det\left[\frac{\bar z}{B^*(\zeta,z)}\ \frac{\bar\zeta}{B(\zeta,z)}\
\overbrace{Q_j(\zeta,z)}^{m}\
\overbrace{\frac{d{\bar z}}{B^*(\zeta,z)}}^{q-1}\
\overbrace{\frac{d{\bar\zeta}}{B(\zeta,z)}}^{n-m-q}\right]\wedge\omega(\zeta)\\
=\int_{\Gamma^{\epsilon}_J\cap\left\{|\zeta-z|<\sqrt{\epsilon}\right\}}
\frac{\vartheta_{\beta}(\zeta)\Phi(\zeta)}{\prod_{k=1}^mP_k(\zeta)}
\bigwedge\det\left[\frac{\bar z}{B^*(\zeta,z)}\ \frac{\bar\zeta}{B(\zeta,z)}\
\overbrace{Q_j(\zeta,z)}^{m}\
\overbrace{\frac{d{\bar z}}{B^*(\zeta,z)}}^{q-1}\
\overbrace{\frac{d{\bar\zeta}}{B(\zeta,z)}}^{n-m-q}\right]\wedge\omega(\zeta)\\
+\int_{\Gamma^{\epsilon}_J\cap\left\{|\zeta-z|>\sqrt{\epsilon}\right\}}
\frac{\vartheta_{\beta}(\zeta)\Phi(\zeta)}{\prod_{k=1}^mP_k(\zeta)}
\bigwedge\det\left[\frac{\bar z}{B^*(\zeta,z)}\ \frac{\bar\zeta}{B(\zeta,z)}\
\overbrace{Q_j(\zeta,z)}^{m}\
\overbrace{\frac{d{\bar z}}{B^*(\zeta,z)}}^{q-1}\
\overbrace{\frac{d{\bar\zeta}}{B(\zeta,z)}}^{n-m-q}\right]\wedge\omega(\zeta).
\end{multline}
\indent
For the first integral in the right-hand side of \eqref{BochnerPartitioned} we have
\begin{multline}\label{FirstBochnerBounded}
\left|\int_{\Gamma^{\epsilon}_J\cap\left\{|\zeta-z|<\sqrt{\epsilon}\right\}}
\frac{\vartheta_{\beta}(\zeta)\Phi(\zeta)}{\prod_{k=1}^mP_k(\zeta)}
\bigwedge\det\left[\frac{\bar z}{B^*(\zeta,z)}\ \frac{\bar\zeta}{B(\zeta,z)}\
\overbrace{Q(\zeta,z)}^{m}\
\overbrace{\frac{d{\bar z}}{B^*(\zeta,z)}}^{q-1}\
\overbrace{\frac{d{\bar\zeta}}{B(\zeta,z)}}^{n-m-q}\right]\wedge\omega(\zeta)\right|\\
\leq C\cdot\int_0^{\sqrt{\epsilon}}dt\int_0^{2\pi} d\phi_1\cdots\int_0^{2\pi} d\phi_m
\int_0^{\sqrt{\epsilon}}\frac{r^{2(n-m)}dr}
{\left(t+r^2\right)^{n-m+1}}\\
\leq C\cdot\int_0^{\sqrt{\epsilon}}dt\int_0^{\sqrt{\epsilon}}\frac{dr}{t+r^2}
\leq C\cdot\int_0^{\sqrt{\epsilon}}\frac{dt}{\sqrt{t}}\int_0^{\infty}\frac{du}{1+u^2}
\leq C\cdot\sqrt[4]{\epsilon}\rightarrow 0,\\
\end{multline}
as $\epsilon\to 0$.\\
\indent
For the second integral in \eqref{BochnerPartitioned} we denote
$$K(\zeta,z)=\frac{\vartheta_{\beta}(\zeta)\Phi(\zeta)}{\prod_{k=1}^mP_k(\zeta)}
\bigwedge\det\left[\frac{\bar z}{B^*(\zeta,z)}\ \frac{\bar\zeta}{B(\zeta,z)}\
\overbrace{Q(\zeta,z)}^{m}\
\overbrace{\frac{d{\bar z}}{B^*(\zeta,z)}}^{q-1}\
\overbrace{\frac{d{\bar\zeta}}{B(\zeta,z)}}^{n-m-q}\right]\wedge\omega(\zeta)$$
and consider $z^{(1)}, z^{(2)}$ such that $|z^{(1)}-z^{(2)}|<\epsilon/4$.\\
\indent
Then using relations
$$\begin{array}{ll}
|\zeta-z|>\sqrt{\epsilon}\Longrightarrow \left|B(\zeta,z)\right|>\epsilon/2,
\vspace{0.1in}\\
|z^{(1)}-z^{(2)}|<\epsilon/4,\hspace{0.1in}\left|B(\zeta,z^{(1)})\right|>\epsilon/2
\Longrightarrow \left|B(\zeta,z^{(2)})\right|>\epsilon/4,
\end{array}$$
we obtain
\begin{multline}\label{BigB}
\left|\int_{\Gamma^{\epsilon}_J\cap\left\{|\zeta-z^{(1)}|>\sqrt{\epsilon}\right\}}
\left(K(\zeta,z^{(1)})-K(\zeta,z^{(2)})\right)\right|\\
\leq\left|\int_{\Gamma^{\epsilon}_J\cap\left\{\left|B(\zeta,z^{(1)})\right|>\epsilon/2\right\}}
\left(K(\zeta,z^{(1)})-K(\zeta,z^{(2)})\right)\right|\\
\leq C\epsilon\cdot\int_0^{A}dt
\int_0^{2\pi} d\phi_1\cdots\int_0^{2\pi} d\phi_m
\int_0^{A}\frac{r^{2(n-m)}dr}{\left(\epsilon+t+r^2\right)^{n-m+2}}\\
\leq C\epsilon\cdot\int_0^{A}dt\int_0^{A}
\frac{dr}{\left(\epsilon+t+r^2\right)^2}
\leq C\epsilon\cdot \int_0^{A}\frac{dr}{\left(\epsilon+r^2\right)}\\
\leq C\epsilon\cdot \int_0^{A}
\frac{dr}{\epsilon\left(1+\left(r/\sqrt{\epsilon}\right)^2\right)}
\leq C\sqrt{\epsilon}\cdot \int_0^{\infty}\frac{du}{1+u^2}\leq C\sqrt{\epsilon}.
\end{multline}
\indent
From estimates \eqref{FirstBochnerBounded} and \eqref{BigB} we obtain claim (i)
of the Lemma.\\
\indent
Claim (ii) of the Lemma for the first integral in \eqref{BochnerPartitioned} follows
from estimate \eqref{FirstBochnerBounded}, and for the second integral in
\eqref{BochnerPartitioned} it follows from Lemma~\ref{LimitsExistence} with additional
application of Theorem 1.7.6(2) from \cite{CH} (see also \cite{HP3} Prop. 2.3).
\end{proof}
\indent
In the proposition below we prove smoothness of limits of integrals in \eqref{BochnerDefined}
under assumption of smoothness of the current $\phi$.

\begin{proposition}\label{Smoothness} Let $V\subset \C\P^n$ be a reduced subvariety as in \eqref{Variety}, let $g_{\alpha}$ be an analytic function on $U_{\alpha}\subset \C\P^n$
as in Theorem~\ref{HodgeRepresentation}, and let the $\bar\partial$-closed current
$\phi$ be defined by a $C^{\infty}$ form $\Phi$.\\
\indent
Then for a fixed $\eta>0$, $J=(1,\dots,m)$, an admissible path $\epsilon(t)$,
the limits of integrals in \eqref{BochnerDefined} represent $C^{\infty}$ forms
on $\left\{U_{\alpha}:\ |g_{\alpha}(z)|>\eta \right\}$.
\end{proposition}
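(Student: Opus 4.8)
The plan is to establish that the function $G(z)$ defined by the limit in \eqref{BochnerDefined} has continuous partial derivatives $\partial^a_z\partial^b_{\bar z}G$ of every order on $\left\{U_{\alpha}:\ |g_{\alpha}(z)|>\eta\right\}$, with the formally differentiated integrals converging to them locally uniformly in $z$ and uniformly along the admissible path $\epsilon(t)$. Granting this, the standard theorems on differentiation under the integral and the limit signs identify these limits with the derivatives of $G$ and give $G\in C^{\infty}$. Since the assertion is local, I fix $z_0$ with $|g_{\alpha}(z_0)|>\eta$, choose $\tau>0$ as in Lemma~\ref{BochnerIntegrals} so that $|g_{\alpha}(\zeta)|>\eta/2$ for $|\zeta-z_0|<\tau$, and for $z$ in the ball $\left\{|z-z_0|<\tau/4\right\}$ I split the integrand by a $\zeta$-cutoff supported in $\left\{|\zeta-z_0|<\tau/2\right\}$, chosen independent of $z$ and of $t$, into a near piece and a far piece.

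On the far piece the denominators $B^*(\zeta,z)$ and $B(\zeta,z)$ stay bounded away from zero, so the kernel is $C^{\infty}$ in $\zeta$ and real-analytic in $(z,\bar z)$. After the local reduction \eqref{LocalIntegralEquality} this piece has exactly the form to which Lemma~\ref{LimitsExistence} applies, with the real parameters $u$ taken to be the real and imaginary parts of the coordinates of $z$; that lemma yields both the existence of the limit and its real-analytic, hence $C^{\infty}$, dependence on $z$. Thus the far piece is smooth near $z_0$, and only the near piece, which carries the diagonal singularity at $\zeta=z$, remains to be treated.

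For the near piece the kernel has to be differentiated in $z$, and a direct bound fails: in the coordinates \eqref{SmallCoordinates} each differentiation of a Cauchy factor $1/B^*$ or $1/B$ raises the order of the pole, already turning the integrable bound $r^{2(n-m)}(t+r^2)^{-(n-m+1)}$ underlying \eqref{FirstBochnerBounded} into $r^{2(n-m)}(t+r^2)^{-(n-m+2)}$, whose near-diagonal integral diverges like $\int dt/t$. The way around this is to trade $z$-differentiations for $\zeta$-differentiations and a Stokes' formula on the tube, in the spirit of Proposition~\ref{Formula}. From $B^*=\sum_j\bar z_j(\zeta_j-z_j)$ and $B=\sum_j\bar\zeta_j(\zeta_j-z_j)$ one reads off $\partial_{z_l}(1/B^*)=-\partial_{\zeta_l}(1/B^*)$ and $\partial_{z_l}(1/B)=-\partial_{\zeta_l}(1/B)$, while the antiholomorphic derivatives are harmless because $\partial_{\bar z_l}B^*=\zeta_l-z_l$ and $\partial_{\bar z_l}B=0$ contribute an extra factor $\zeta-z$ that restores the original order of the singularity. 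Using the multilinearity of the determinant together with the structural identity \eqref{domega}, I would rewrite each $z$-derivative of the kernel as a $d_{\zeta}$-exact form plus terms of the original pole order, and then apply Stokes on $\Gamma^{\epsilon(t)}_J$ to move the exact part onto the smooth factor $\vartheta_{\beta}\Phi$ and the polynomial numerators $Q$. The arrangement must be such that no $\zeta$-derivative ever falls on the singular weight $1/\prod_kP_k$, so that the residue structure is preserved, and the boundary contributions are of lower dimension and controlled as in Lemma~\ref{ZeroTerms}.

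After this transfer every resulting term is again of the type \eqref{BochnerDefined}, with the same integrable order $(t+r^2)^{-(n-m+1)}$ of the diagonal singularity but with $\Phi$ replaced by one of its $C^{\infty}$ derivatives or by a smooth combination of $\Phi$ and derivatives of $Q$. The estimates \eqref{FirstBochnerBounded} and \eqref{BigB} of Lemma~\ref{BochnerIntegrals} then apply verbatim: the near-diagonal part tends to zero like $\epsilon^{1/4}$ and the remaining part is equicontinuous in $z$ with modulus $O(\sqrt{\epsilon})$, so each differentiated limit exists and is continuous, locally uniformly in $z$. As $(a,b)$ is arbitrary, this gives $G\in C^{\infty}$ on $\left\{|g_{\alpha}|>\eta\right\}$, which is the assertion of the proposition. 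I expect the third step to be the main obstacle: organizing, for an arbitrary order of differentiation, the cancellation of the pole-raising terms and the integration by parts so that $1/\prod_kP_k$ is never differentiated and the admissible-path limit is preserved; the $\bar\partial$-closedness of $\phi$, absent from Lemma~\ref{BochnerIntegrals}, should enter here exactly as in Lemmas~\ref{ResidueReduction} and \ref{ZeroIntegrals} to discard the boundary and higher-pole terms that would otherwise spoil the limit.
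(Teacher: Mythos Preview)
Your architecture is the paper's: split near/far by a cutoff, dispatch the far piece by the residue theory of Lemma~\ref{LimitsExistence} (the paper cites Theorem~1.7.2 in \cite{CH} directly), and treat the near piece by trading $z$--derivatives of the Cauchy kernels for $\zeta$--derivatives and integrating by parts. Where you diverge from the paper, and where there is a real gap, is in the handling of the antiholomorphic derivatives.

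Your claim that ``$\partial_{\bar z_l}B^*=\zeta_l-z_l$ \dots contribute an extra factor $\zeta-z$ that restores the original order of the singularity'' is not correct on the sphere. In the coordinates \eqref{SmallCoordinates} one has $|B^*|\sim t+r^2$ but only $|\zeta-z|\lesssim(t+r^2)^{1/2}$, so the gain is half an order, not a full one; the resulting bound $r^{2(n-m)}(t+r^2)^{-(n-m+3/2)}$ still integrates to $\int r^{-1}\,dr$ after the $t$--integration. The paper's Lemma~\ref{LocalSmoothness} avoids this by a sharper device: since $\partial_{\bar z_k}(1/B)=0$, an $l$--fold $\bar z$--differentiation raises only $(B^*)^{-q}$ to $(B^*)^{-(q+l)}$ with a harmless numerator $\zeta_k^l$; then the antiderivative identity $(B^*)^{-(q+l)}=\text{const}\cdot\bar z_i^{-(l+1)}\partial_{\zeta_i}^{\,l+1}\bigl[(B^*)^{-(q-1)}\bigr]$ lets one integrate by parts and land on a kernel of total order $n-m$, one \emph{less} than the starting $n-m+1$. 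That extra drop is exactly what makes the estimate close, and it is what your ``same order after transfer'' scheme lacks.

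Two further remarks. First, the paper's near--piece lemma is stated for an arbitrary $C^{l+1}$ form supported away from $\text{sing}\,V$ and uses neither \eqref{Closed} nor the machinery of Lemmas~\ref{ResidueReduction}--\ref{ZeroIntegrals}; the $\bar\partial$--closedness of $\phi$ is not needed here, contrary to your last paragraph. Second, the paper does not arrange to avoid letting $\partial_{\zeta_i}$ fall on $1/\prod_kP_k$: after the integration by parts the derivative acts on $\Phi\cdot Q/\prod_kP_k$, and the point is that the support is in the smooth locus so the residual limit is controlled by the reduced kernel estimate. Your instinct to protect $1/\prod_kP_k$ is reasonable, but it is the kernel--order reduction, not that protection, that carries the argument.
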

\begin{proof} For a fixed $z$ with $|g_{\alpha}(z)|>\eta$ we choose $\tau>0$ as in
Lemma~\ref{BochnerZero}, so that for
$|\zeta-z|<\tau$ we have $|g_{\alpha}(\zeta)|>\eta/2$. Then, as in Lemma~\ref{ZeroTerms}, we represent the integral in \eqref{ZeroDeterminants} as
\begin{multline}\label{SmoothnessPartitioned}
\int_{\Gamma^{\epsilon(t)}_J}
\frac{\vartheta_{\beta}(\zeta)\Phi(\zeta)}{\prod_{k=1}^mP_k(\zeta)}
\bigwedge\det\left[\frac{\bar z}{B^*(\zeta,z)}\ \frac{\bar\zeta}{B(\zeta,z)}\
\overbrace{Q(\zeta,z)}^{m}\
\overbrace{\frac{d{\bar z}}{B^*(\zeta,z)}}^{q-1}\
\overbrace{\frac{d{\bar\zeta}}{B(\zeta,z)}}^{n-m-q}\right]\wedge\omega(\zeta)\\
=\int_{\Gamma^{\epsilon(t)}_J\cap\left\{|\zeta-z|<\tau\right\}}
\frac{\vartheta_{\beta}(\zeta)\Phi(\zeta)}{\prod_{k=1}^mP_k(\zeta)}
\bigwedge\det\left[\frac{\bar z}{B^*(\zeta,z)}\ \frac{\bar\zeta}{B(\zeta,z)}\
\overbrace{Q(\zeta,z)}^{m}\
\overbrace{\frac{d{\bar z}}{B^*(\zeta,z)}}^{q-1}\
\overbrace{\frac{d{\bar\zeta}}{B(\zeta,z)}}^{n-m-q}\right]\wedge\omega(\zeta)\\
+\int_{\Gamma^{\epsilon(t)}_J\cap\left\{|\zeta-z|>\tau\right\}}
\frac{\vartheta_{\beta}(\zeta)\Phi(\zeta)}{\prod_{k=1}^mP_k(\zeta)}
\bigwedge\det\left[\frac{\bar z}{B^*(\zeta,z)}\ \frac{\bar\zeta}{B(\zeta,z)}\
\overbrace{Q(\zeta,z)}^{m}\
\overbrace{\frac{d{\bar z}}{B^*(\zeta,z)}}^{q-1}\
\overbrace{\frac{d{\bar\zeta}}{B(\zeta,z)}}^{n-m-q}\right]\wedge\omega(\zeta).
\end{multline}
\indent
From Theorem 1.7.2 in \cite{CH} (see also \cite{HP4} Prop. 2.2) we obtain that the second integral
in the right-hand side of \eqref{SmoothnessPartitioned} represents a $C^{\infty}$ form with respect to $z$,
since the functions $|B(\zeta,z)|, |B^*(\zeta,z)|$ are separated from zero uniformly with
respect to $z$ for $|\zeta-z|>\tau$.

\indent
To prove the statement of the proposition for the first integral in the right-hand side
of \eqref{SmoothnessPartitioned} we use the following lemma.

\begin{lemma}\label{LocalSmoothness} Let $V\subset \C\P^n$ be a reduced subvariety
as in \eqref{Variety}, let $z\in V$ be a nonsingular point, and let $\Phi$
be a $C^{l+1}$ form with compact support in a neighborhood $U_z\ni z$, such that $U_z\cap\text{sing}V=\emptyset$.\\
\indent
Then for $J=(1,\dots,m)$ and an admissible path $\epsilon(t)$ the limit
\begin{equation}\label{SmoothIntegral}
\lim_{t\to 0}\int_{\Gamma^{\epsilon(t)}_J\cap\ U}
\frac{\Phi(\zeta)\cdot\overbrace{Q(\zeta,z)}^{m}}{\prod_{k=1}^mP_k(\zeta)}
\wedge\frac{\overbrace{d{\bar z}}^{q-1}\wedge\overbrace{d{\bar\zeta}}^{n-m-q}}
{\left(B^*(\zeta,z)\right)^{q}\left(B(\zeta,z)\right)^{n-m-q+1}}\wedge\omega(\zeta),
\end{equation}
defines a $C^{l}$-form on some neighborhood of $V$.
\end{lemma}
\begin{proof} Using equalities
\begin{equation}\label{BarZEqualities}
\begin{aligned}
&\frac{\partial}{\partial{\bar z}_k}
\left[\frac{1}{\left(B^*(\zeta,z)\right)^p}\right]
=\frac{\partial}{\partial{\bar z}_k}
\left[\frac{1}{\left(-1+\sum_{j=0}^n{\bar z}_j\zeta_j\right)^{p}}\right]
=-p\left[\frac{\zeta_k}{\left(B^*(\zeta,z)\right)^{p+1}}\right],\\
&\frac{\partial}{\partial{\bar z}_k}
\left[\frac{1}{\left(B(\zeta,z)\right)^p}\right]
=\frac{\partial}{\partial{\bar z}_k}
\left[\frac{1}{\left(1-\sum_{j=0}^n{\bar\zeta}_jz_j\right)^{p}}\right]=0,
\end{aligned}
\end{equation}
we obtain
\begin{multline*}
\frac{\partial^l}{\partial{\bar z}_k^l}\int_{\Gamma^{\epsilon(t)}_J\cap\ U}
\frac{\Phi(\zeta)\cdot\overbrace{Q(\zeta,z)}^{m}}{\prod_{k=1}^mP_k(\zeta)}
\wedge\frac{\overbrace{d{\bar z}}^{q-1}\wedge\overbrace{d{\bar\zeta}}^{n-m-q}}
{\left(B^*(\zeta,z)\right)^{q}\left(B(\zeta,z)\right)^{n-m-q+1}}\wedge\omega(\zeta)\\
=\int_{\Gamma^{\epsilon(t)}_J\cap\ U}
\frac{\Phi(\zeta)\cdot\overbrace{Q(\zeta,z)}^{m}}{\prod_{k=1}^mP_k(\zeta)}
\cdot\frac{\partial^l}{\partial{\bar z}_k^l}\left[\frac{1}{\left(B^*(\zeta,z)\right)^{q}}\right]
\wedge\frac{\overbrace{d{\bar z}}^{q-1}\wedge\overbrace{d{\bar\zeta}}^{n-m-q}}
{\left(B(\zeta,z)\right)^{n-m-q+1}}\wedge\omega(\zeta)\\
=(-1)^lq\cdots(q+l-1)\int_{\Gamma^{\epsilon(t)}_J\cap\ U}
\frac{\Phi(\zeta)\cdot\overbrace{Q(\zeta,z)}^{m}}{\prod_{k=1}^mP_k(\zeta)}
\wedge\frac{\zeta_k^l\cdot\overbrace{d{\bar z}}^{q-1}
\wedge\overbrace{d{\bar\zeta}}^{n-m-q}}
{\left(B^*(\zeta,z)\right)^{q+l}\left(B(\zeta,z)\right)^{n-m-q+1}}\wedge\omega(\zeta).
\end{multline*}
\indent
To estimate the right-hand side of the equality above we assume without loss of
generality that for some $i\in (0,\dots,n)$ we have $z_i\neq 0$ in $U_z$. Then using equalities
\begin{equation}\label{ZetaEqualities}
\begin{aligned}
&\frac{\partial}{\partial\zeta_k}
\left[\frac{1}{\left(B^*(\zeta,z)\right)^{p}}\right]
=\frac{\partial}{\partial\zeta_k}
\left[\frac{1}{\left(-1+\sum_{j=0}^n{\bar z}_j\zeta_j\right)^{p}}\right]
=-p\left[\frac{{\bar z}_k}{\left(B^*(\zeta,z)\right)^{p+1}}\right],\\
&\frac{\partial}{\partial\zeta_k}
\left[\frac{1}{\left(B(\zeta,z)\right)^{p}}\right]
=\frac{\partial}{\partial\zeta_k}
\left[\frac{1}{\left(1-\sum_{j=0}^n{\bar\zeta}_jz_j\right)^{p}}\right]=0,
\end{aligned}
\end{equation}
we obtain for $q>1$
\begin{multline*}
(-1)^lq\cdots(q+l-1){\bar z}_i^{l+1}\int_{\Gamma^{\epsilon(t)}_J\cap\ U}
\frac{\Phi(\zeta)\cdot\overbrace{Q(\zeta,z)}^{m}}{\prod_{k=1}^mP_k(\zeta)}
\wedge\frac{\zeta_k^l\cdot\overbrace{d{\bar z}}^{q-1}
\wedge\overbrace{d{\bar\zeta}}^{n-m-q}}
{\left(B^*(\zeta,z)\right)^{q+l}\left(B(\zeta,z)\right)^{n-m-q+1}}\wedge\omega(\zeta)\\
=-\frac{1}{q-1}\int_{\Gamma^{\epsilon(t)}_J\cap\ U}
\frac{\Phi(\zeta)\cdot\overbrace{Q(\zeta,z)}^{m}}{\prod_{k=1}^mP_k(\zeta)}
\frac{\partial^{l+1}}{\partial\zeta_i^{l+1}}
\left[\frac{1}{\left(B^*(\zeta,z)\right)^{q-1}}\right]
\wedge\frac{\zeta_k^l\cdot\overbrace{d{\bar z}}^{q-1}
\wedge\overbrace{d{\bar\zeta}}^{n-m-q}}
{\left(B(\zeta,z)\right)^{n-m-q+1}}\wedge\omega(\zeta)\\
=-\frac{1}{q-1}\int_{\Gamma^{\epsilon(t)}_J\cap\ U}
\frac{\partial^{l+1}}{\partial\zeta_i^{l+1}}
\left[\frac{\Phi(\zeta)\cdot\overbrace{Q(\zeta,z)}^{m}}{\prod_{k=1}^mP_k(\zeta)}\right]
\wedge\frac{\zeta_k^l\cdot\overbrace{d{\bar z}}^{q-1}
\wedge\overbrace{d{\bar\zeta}}^{n-m-q}}
{\left(B^*(\zeta,z)\right)^{q-1}\left(B(\zeta,z)\right)^{n-m-q+1}}
\wedge\omega(\zeta),
\end{multline*}
where in the last equality we used integration by parts.\\
\indent
Estimate similar to \eqref{FirstBochnerBounded} produces the following estimate of the integral
in the right-hand side of equality above
\begin{multline*}
\left|\int_{\Gamma^{\epsilon(t)}_J\cap\ U}
\frac{\partial^{l+1}}{\partial\zeta_i^{l+1}}
\left[\frac{\Phi(\zeta)\cdot\overbrace{Q(\zeta,z)}^{m}}{\prod_{k=1}^mP_k(\zeta)}\right]
\wedge\frac{\zeta_k^l\cdot\overbrace{d{\bar z}}^{q-1}
\wedge\overbrace{d{\bar\zeta}}^{n-m-q}}
{\left(B^*(\zeta,z)\right)^{q-1}\left(B(\zeta,z)\right)^{n-m-q+1}}
\wedge\omega(\zeta)\right|\\
\leq C\cdot\left\|\Phi\right\|_{C^{l+1}}
\int_0^{\tau}dt\int_0^{2\pi} d\phi_1\cdots\int_0^{2\pi} d\phi_m
\int_0^{\tau}\frac{r^{2(n-m)-1}}{\left(t+r^2\right)^{n-m}}dr\\
\leq C\cdot\left\|\Phi\right\|_{C^{l+1}}\int_0^{\tau}dt\int_0^{\tau}\frac{rdr}{t+r^2}
\leq C\cdot\left\|\Phi\right\|_{C^{l+1}}.\\
\end{multline*}
\indent
We notice that the same estimate is valid for $q=1$ if we use the fact that the functions
$$\log{\left(-1+\sum_{j=0}^n{\bar z}_j\zeta_j\right)},\hspace{0.2in}
\log{\left(1-\sum_{j=0}^n{\bar\zeta}_jz_j\right)}$$
are well defined on $\*S(1)\times\*S(1)$, and satisfy equations similar to the ones used above.\\
\indent
We obtain similar estimates for mixed derivatives
${\dis \frac{\partial^l}{\partial z^s\partial{\bar z}^p} }$ using together with equalities
\eqref{BarZEqualities} and \eqref{ZetaEqualities} the equalities
\begin{equation*}
\begin{aligned}
&\frac{\partial}{\partial z_k}
\left[\frac{1}{\left(B^*(\zeta,z)\right)^p}\right]
=\frac{\partial}{\partial z_k}
\left[\frac{1}{\left(-1+\sum_{j=0}^n{\bar z}_j\zeta_j\right)^{p}}\right]=0,\\
&\frac{\partial}{\partial z_k}
\left[\frac{1}{\left(B(\zeta,z)\right)^p}\right]
=\frac{\partial}{\partial z_k}
\left[\frac{1}{\left(1-\sum_{j=0}^n{\bar\zeta}_jz_j\right)^{p}}\right]
=p\left[\frac{{\bar\zeta}_k}{\left(B(\zeta,z)\right)^{p+1}}\right],\\
\end{aligned}
\end{equation*}
and
\begin{equation*}
\begin{aligned}
&\frac{\partial}{\partial{\bar\zeta}_k}
\left[\frac{1}{\left(B^*(\zeta,z)\right)^{p}}\right]
=\frac{\partial}{\partial{\bar\zeta}_k}
\left[\frac{1}{\left(-1+\sum_{j=0}^n{\bar z}_j\zeta_j\right)^{p}}\right]=0,\\
&\frac{\partial}{\partial{\bar\zeta}_k}
\left[\frac{1}{\left(B(\zeta,z)\right)^{p}}\right]
=\frac{\partial}{\partial{\bar\zeta}_k}
\left[\frac{1}{\left(1-\sum_{j=0}^n{\bar\zeta}_jz_j\right)^{p}}\right]
=p\left[\frac{z_k}{\left(B(\zeta,z)\right)^{p+1}}\right].
\end{aligned}
\end{equation*}
\end{proof}
\indent
Combining formula \eqref{SmoothnessPartitioned} with the statement of
Lemma~\ref{LocalSmoothness} we obtain the statement of
Proposition~\ref{Smoothness}.
\end{proof}

\indent
Summarizing the results of Lemmas~\ref{BochnerZero} - \ref{BochnerIntegrals}
and of Proposition~\ref{Smoothness} we obtain:

\begin{proposition}\label{Solution} Let $V\subset \C\P^n$ be a reduced
subvariety as in \eqref{Variety}, and let $g_{\alpha}$ be an analytic function on
$U_{\alpha}\subset \C\P^n$ as in Theorem~\ref{HodgeRepresentation}. Let
$\phi=\sum_{\alpha=0}^n\vartheta_{\alpha}\Phi^{(0,q)}$
be a $\bar\partial$-closed residual current of homogeneity zero on $V$.\\
\indent
Then for a fixed $\eta>0$ and an arbitrary $z\in U_{\alpha}$,
such that $|g_{\alpha}(z)|>\eta$, we have the following equality
\begin{multline}\label{SolutionFormula}
\lim_{t\to 0}I_q^{\epsilon(t)}\left[\vartheta_{\beta}\Phi\right](z)
=C(n,q,m)\lim_{t\to 0}\int_{\Gamma^{\epsilon(t)}_J}
\frac{\vartheta_{\beta}(\zeta)\Phi(\zeta)}{\prod_{k=1}^mP_k(\zeta)}\\
\bigwedge\det\left[\frac{\bar z}{B^*(\zeta,z)}\ \frac{\bar\zeta}{B(\zeta,z)}\
\overbrace{Q(\zeta,z)}^{m}\
\overbrace{\frac{d{\bar z}}{B^*(\zeta,z)}}^{q-1}\
\overbrace{\frac{d{\bar\zeta}}{B(\zeta,z)}}^{n-m-q}\right]\wedge\omega(\zeta),
\end{multline}
where $J=(1,\dots,m)$, and $\epsilon(t)$ is an admissible path.\\
\indent
The limit in \eqref{SolutionFormula} is well defined and represents a continuous function
on $\left\{U_{\alpha}:\ |g_{\alpha}(z)|>\eta \right\}$, which is identically zero
if condition \eqref{PhiClosed} is satisfied. If $\phi$ is defined by a $C^{\infty}$ form,
then the limit of the integral in \eqref{SolutionFormula} is a $\C^{\infty}$ form
on $\left\{U_{\alpha}:\ |g_{\alpha}(z)|>\eta \right\}$ for any fixed $\eta$.
\end{proposition}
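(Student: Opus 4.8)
The plan is to derive \eqref{SolutionFormula} by assembling, in order, the reductions already obtained for the operator $I_q^{\epsilon(t)}$, and then to extract the regularity and vanishing claims from the relevant lemmas. Throughout I fix $z$ with $|g_\alpha(z)|>\eta$ and take all limits along the admissible path $\epsilon(t)$, interpreting every integral as a Coleff--Herrera residue in the sense of \eqref{preL-Operator}.

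First I would begin with the explicit expression \eqref{IOperator} for $I_q^{\epsilon(t)}[\vartheta_\beta\Phi]$, which splits into the Bochner--Martinelli-type integral over $U^{\epsilon(t)}\times[0,1]$ and the sum over multi-indices $J$ of integrals over $\Gamma^{\epsilon(t)}_J\times\Delta_J$. By Lemma~\ref{BochnerZero} the first integral tends to $0$ as $t\to0$: localizing to $\{|\zeta-z|<\tau\}$, where $|g_\alpha|>\eta/2$, controls the near-diagonal contribution, while the complementary piece vanishes because the volume of $U^{\epsilon(t)}$ shrinks to zero. Hence only the $\Gamma_J$-integrals survive in the limit.

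Next I would integrate out the simplex coordinates $\lambda,\mu_j$ to rewrite each surviving term in the determinant form \eqref{IDeterminants}, and then insert the geometric series \eqref{QSeries}. Applying Theorem~1.7.6(2) of \cite{CH} as in \eqref{SolutionIntegrals} discards every term with $|A|\geq1$, leaving the reduced expression \eqref{I-Sum-of-Determinants}, a finite sum over $|J|\geq1$. The decisive step is then Lemma~\ref{ZeroTerms}, which shows that each term with $|J|=p<m$ has vanishing residual limit on $\{|g_\alpha|>\eta\}$; the mechanism is again the near-diagonal estimate \eqref{FirstZeroBochner} together with the reduction \eqref{FiberZero} to integration over the smooth part $V\setminus V^{\prime}_\alpha$. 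This isolates the single term $J=(1,\dots,m)$, and after absorbing the combinatorial factor $C(n,q,|J|)$ into $C(n,q,m)$ we obtain exactly the right-hand side of \eqref{SolutionFormula}.

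Finally, the stated properties follow by quotation. Lemma~\ref{BochnerIntegrals}(i) gives that the surviving limit is a well-defined continuous function on $\{U_\alpha:\ |g_\alpha(z)|>\eta\}$, and Lemma~\ref{BochnerIntegrals}(ii) gives that it vanishes identically under the splitting \eqref{PhiClosed}; Proposition~\ref{Smoothness} upgrades continuity to $C^\infty$ when $\phi$ is represented by a smooth form. I expect the main obstacle to be bookkeeping rather than analysis: one must track precisely which determinant columns ($\bar z/B^*$, $\bar\zeta/B$, and the $Q_k$'s) pair with which differentials ($d\bar z$ versus $d\bar\zeta$), so that the count $n-|J|-q$ of $d\bar\zeta$-factors forces $|J|=m$ to be the only surviving case, and one must verify that all limits are genuinely taken in the admissible Coleff--Herrera sense so that the zero-current statements of \cite{CH} legitimately apply.
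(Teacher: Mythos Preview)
Your proposal is correct and follows exactly the paper's own route: the paper's proof simply cites Lemmas~\ref{BochnerZero} and \ref{ZeroTerms} for the derivation of \eqref{SolutionFormula}, and Lemma~\ref{BochnerIntegrals} together with Proposition~\ref{Smoothness} for the remaining claims, which is precisely the chain of reductions you spell out. One small caveat: in your closing remark, the column count $n-|J|-q\geq 0$ by itself only gives an upper bound on $|J|$ and does not force $|J|=m$; it is genuinely the analytic estimate of Lemma~\ref{ZeroTerms} (which you already invoked correctly earlier) that kills the terms with $|J|<m$.
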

\begin{proof} We obtain expression \eqref{SolutionFormula} from Lemmas~\ref{BochnerZero}
and \ref{ZeroTerms}, and the rest of the statement from Lemma~\ref{BochnerIntegrals}
and Proposition~\ref{Smoothness}.
\end{proof}

\section{Proof of Theorem~\ref{HodgeRepresentation}.}\label{Proof}

\indent
As the first step in obtaining
formula \eqref{HodgeHomotopy} for $\bar\partial$-closed residual currents
we use Propositions~\ref{HodgeProjector} and \ref{Solution} to obtain the residual limit
of formula \eqref{CurrentFormula}.\\
\indent
Interpreting both sides of \eqref{CurrentFormula} as residual currents we obtain
for a fixed $t$ the equality
\begin{multline*}
\left\langle\phi,\gamma\right\rangle
=\lim_{\tau\to 0}\sum_{\alpha}\int_{T^{\delta(\tau)}_{\alpha}}\vartheta_{\alpha}(z)
\frac{\gamma(z)\wedge\Phi(z)}{\prod_{k=1}^m F^{(\alpha)}_k(z)}\\
=\lim_{\tau\to 0}\sum_{\alpha}\int_{T^{\delta(\tau)}_{\alpha}}
\frac{\vartheta_{\alpha}(z)\bar\partial\gamma(z)}{\prod_{k=1}^m F^{(\alpha)}_k(z)}
\wedge\left(\sum_{\beta}I_q^{\epsilon(t)}\left[\vartheta_{\beta}\Phi\right](z)\right)\\
+\lim_{\tau\to 0}\sum_{\alpha}\int_{T^{\delta(\tau)}_{\alpha}}
\frac{\vartheta_{\alpha}(z)\gamma(z)}{\prod_{k=1}^m F^{(\alpha)}_k(z)}
\wedge\left(\sum_{\beta}I_{q+1}^{\epsilon(t)}
\left[\bar\partial\left(\vartheta_{\beta}\Phi\right)\right](z)\right)\\
+\lim_{\tau\to 0}\sum_{\alpha}\int_{T^{\delta(\tau)}_{\alpha}}
\frac{\vartheta_{\alpha}(z)\gamma(z)}{\prod_{k=1}^m F^{(\alpha)}_k(z)}
\wedge\left(\sum_{\beta}L_q^{\epsilon(t)}\left[\vartheta_{\beta}\Phi\right](z)\right)
\end{multline*}
for an arbitrary $\gamma\in {\cal E}^{(n,n-m-q)}\left(V,{\cal L}\right)$, the differential form
$\Phi(\zeta)=\sum_{\alpha}\vartheta_{\alpha}(\zeta)\Phi_{\alpha}(\zeta)$,
and an admissible path $\left\{\delta_k(\tau)\right\}_1^m$.\\
\indent
Then using for the right-hand side of equality above smoothness of the forms
$I_q^{\epsilon(t)}\left[\vartheta_{\beta}\Phi\right]$,
$I_{q+1}^{\epsilon(t)}\left[\bar\partial\left(\vartheta_{\beta}\Phi\right)\right]$,
and $L_q^{\epsilon(t)}\left[\vartheta_{\beta}\Phi\right]$ with respect to
$z\in U^{\delta(\tau)}$ for fixed $t$ and $\tau\to 0$ we apply 
Theorem 1.8.3 in \cite{CH} (see also \cite{HP4} Prop. 2.2)
and obtain the following equality
\begin{multline}\label{epsilonCurrents}
\left\langle\phi,\gamma\right\rangle
=\lim_{\eta\to 0}
\lim_{\tau\to 0}\int_{\left\{|g_{\alpha}(z)|>\eta\right\}\cap\left\{T^{\delta(\tau)}_{\alpha}\right\}}
\frac{\vartheta_{\alpha}(z)\bar\partial\gamma(z)}{\prod_{k=1}^m F^{(\alpha)}_k(z)}
\wedge\left(\sum_{\beta}I_q^{\epsilon(t)}\left[\vartheta_{\beta}\Phi\right](z)\right)\\
+\lim_{\eta\to 0}\lim_{\tau\to 0}\sum_{\alpha}
\int_{\left\{|g_{\alpha}(z)|>\eta\right\}\cap\left\{T^{\delta(\tau)}_{\alpha}\right\}}
\frac{\vartheta_{\alpha}(z)\gamma(z)}{\prod_{k=1}^m F^{(\alpha)}_k(z)}
\wedge\left(\sum_{\beta}I_{q+1}^{\epsilon(t)}
\left[\bar\partial\left(\vartheta_{\beta}\Phi\right)\right](z)\right)\\
+\lim_{\eta\to 0}\lim_{\tau\to 0}\sum_{\alpha}
\int_{\left\{|g_{\alpha}(z)|>\eta\right\}\cap\left\{T^{\delta(\tau)}_{\alpha}\right\}}
\frac{\vartheta_{\alpha}(z)\gamma(z)}{\prod_{k=1}^m F^{(\alpha)}_k(z)}
\wedge\left(\sum_{\beta}L_q^{\epsilon(t)}\left[\vartheta_{\beta}\Phi\right](z)\right).
\end{multline}

\indent
In the next step we pass to the limit in the right-hand side of \eqref{epsilonCurrents}
as $t\to 0$ and $\eta>0$ is fixed. We use the following lemma to simplify the limit
of the right-hand side of \eqref{epsilonCurrents}.

\begin{lemma}\label{ClosedZero} Let $\phi\in Z_R^{(0,n-m)}\left(V\right)$ be a
$\bar\partial$-closed residual current of homogeneity zero defined by a collection of forms
$\left\{\Phi_{\alpha}\right\}_{\alpha=0}^{n+1}$ satisfying conditions \eqref{ResidualCurrent}
and \eqref{Closed}.\\
\indent
Then for fixed $\eta>0$, $\gamma\in {\cal E}^{(n,0)}\left(V,{\cal L}\right)$, and $\sigma>0$
there exist $\tau, t$, such that
\begin{equation}\label{ZeroTerm}
\left|\int_{\left\{|g_{\alpha}(z)|>\eta\right\}\cap\left\{T^{\delta(\tau)}_{\alpha}\right\}}
\vartheta_{\alpha}(z)\frac{\gamma(z)}{\prod_{k=1}^m F^{(\alpha)}_k(z)}
\wedge\left(\sum_{\beta}I_{q+1}^{\epsilon(t)}
\left[\bar\partial\left(\vartheta_{\beta}\Phi\right)\right](z)\right)\right|<\sigma.
\end{equation}
\end{lemma}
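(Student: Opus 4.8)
The plan is to show that the middle term collapses precisely because $\phi$ is $\bar\partial$-closed as a residual current, the crucial point being that the summation over $\beta$ must be carried out \emph{before} any vanishing result is invoked, since the individual forms $\bar\partial(\vartheta_\beta\Phi)$ are not divisible by the $F^{(\beta)}_k$. First I would use the linearity of the integral operator $I_{q+1}^{\epsilon(t)}$ together with the Leibniz rule $\bar\partial(\vartheta_\beta\Phi)=\bar\partial\vartheta_\beta\wedge\Phi+\vartheta_\beta\bar\partial\Phi$ to write
\[
\sum_\beta I_{q+1}^{\epsilon(t)}\left[\bar\partial(\vartheta_\beta\Phi)\right]
=I_{q+1}^{\epsilon(t)}\left[\Big(\sum_\beta\bar\partial\vartheta_\beta\Big)\wedge\Phi\right]
+\sum_\beta I_{q+1}^{\epsilon(t)}\left[\vartheta_\beta\bar\partial\Phi\right]
=\sum_\beta I_{q+1}^{\epsilon(t)}\left[\vartheta_\beta\bar\partial\Phi\right],
\]
where the first term vanishes identically because $\sum_\beta\vartheta_\beta\equiv 1$ forces $\sum_\beta\bar\partial\vartheta_\beta\equiv 0$.

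The second step is to verify that each surviving input $\vartheta_\beta\bar\partial\Phi$ is of the divisible type \eqref{PhiClosed} on $U_\beta$. Writing $\Phi=\sum_\gamma\vartheta_\gamma\Phi_\gamma$ and differentiating, I would treat the two resulting sums on $U_\beta$ separately: for $\sum_\gamma\vartheta_\gamma\bar\partial\Phi_\gamma$ the closedness hypothesis \eqref{Closed} gives $\bar\partial\Phi_\gamma=\sum_k F^{(\gamma)}_k\Omega^{(\gamma)}_k$, while the transition relation $F^{(\gamma)}_k=(z_\beta/z_\gamma)^{\deg P_k}F^{(\beta)}_k$ turns each term into a multiple of $F^{(\beta)}_k$; for $\sum_\gamma\bar\partial\vartheta_\gamma\wedge\Phi_\gamma$ I would again use $\sum_\gamma\bar\partial\vartheta_\gamma\equiv 0$ to replace $\Phi_\gamma$ by $\Phi_\gamma-\Phi_\beta=\sum_k F^{(\gamma)}_k\Omega^{(\gamma\beta)}_k$ from \eqref{ResidualCurrent}, which is likewise divisible by the $F^{(\beta)}_k$. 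Hence $\bar\partial\Phi|_{U_\beta}\in\left(F^{(\beta)}_1,\dots,F^{(\beta)}_m\right)$, so $\vartheta_\beta\bar\partial\Phi$ satisfies \eqref{PhiClosed}, and Proposition~\ref{Solution} (applied in degree $q+1$), resting on Lemma~\ref{BochnerIntegrals}(ii), yields $\lim_{t\to 0}I_{q+1}^{\epsilon(t)}\left[\vartheta_\beta\bar\partial\Phi\right](z)=0$ for every $z$ with $|g_\alpha(z)|>\eta$.

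Finally I would convert this pointwise inner vanishing into the quantitative bound \eqref{ZeroTerm}. The subtle feature is that \eqref{ZeroTerm} hides two nested residual limits --- the inner one in $\zeta$ carried by $t$ and the outer one in $z$ carried by the tube $T^{\delta(\tau)}_\alpha$ as $\tau\to 0$ --- which need not commute. I would therefore fix $t$ first: the estimates \eqref{FirstBochnerBounded} and \eqref{BigB} in the proof of Lemma~\ref{BochnerIntegrals} are uniform in $z$, so the continuous forms $\sum_\beta I_{q+1}^{\epsilon(t)}\left[\vartheta_\beta\bar\partial\Phi\right]$ tend to $0$ uniformly on the relatively compact set $\{|g_\alpha(z)|\ge\eta\}$ meeting $\supp\gamma$ onto which the outer integral concentrates; choosing $t$ small makes this form uniformly small there. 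With $t$ fixed, the outer integral converges as $\tau\to 0$ to the residual integral of Lemma~\ref{LimitsExistence} and \eqref{FiberedEquality}, a bounded functional of the (now uniformly small) continuous integrand, so a small enough $\tau$ produces the required estimate $<\sigma$.

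The main obstacle is exactly this interaction of the two residual limits: one must ensure that the inner limit in $t$ vanishes \emph{uniformly} enough in $z$ to survive the outer residual integration in $z$, and it is precisely here that the explicit, $z$-uniform estimates established in Lemma~\ref{BochnerIntegrals} --- rather than mere pointwise convergence --- are indispensable.
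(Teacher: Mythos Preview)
Your proposal is correct and follows essentially the same route as the paper: reduce via linearity and $\sum_\beta\bar\partial\vartheta_\beta=0$ to $\sum_\beta I_{q+1}^{\epsilon(t)}\left[\vartheta_\beta\bar\partial\Phi\right]$, verify that $\bar\partial\Phi\big|_{U_\beta}$ lies in the ideal $\left(F^{(\beta)}_1,\dots,F^{(\beta)}_m\right)$, and invoke Lemma~\ref{BochnerIntegrals}(ii). Your treatment is in fact more complete than the paper's terse argument---you correctly bring in \eqref{ResidualCurrent} to dispose of the cross term $\sum_\gamma\bar\partial\vartheta_\gamma\wedge\Phi_\gamma$ (the paper cites only \eqref{Closed}), and you spell out the uniformity in $z$ needed to pass from pointwise vanishing of the inner limit to the quantitative bound \eqref{ZeroTerm}, which the paper simply asserts.
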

\begin{proof} Because of the choice of $g_{\alpha}$ (see \eqref{gFunctions}) we conclude that
equality \eqref{ZeroTerm} would follow from equality
$$\lim_{t\to 0}\sum_{\beta}I_{q+1}^{\epsilon(t)}
\left[\bar\partial\left(\vartheta_{\beta}\Phi\right)\right](z)=0$$
for $z\in \left\{U_{\alpha}:\ |g_{\alpha}(z)|>\eta\right\}$,
or using formula \eqref{SolutionFormula} from equality
\begin{equation}\label{ZeroPart}
\lim_{t\to 0}\sum_{\beta}\int_{\Gamma^{\epsilon(t)}_J}
\frac{\bar\partial\left(\vartheta_{\beta}(\zeta)\Phi(\zeta)\right)}{\prod_{k=1}^mP_k(\zeta)}
\bigwedge\det\left[\frac{\bar z}{B^*(\zeta,z)}\ \frac{\bar\zeta}{B(\zeta,z)}\
\overbrace{Q(\zeta,z)}^{m}\
\overbrace{\frac{d{\bar z}}{B^*(\zeta,z)}}^{q}\
\overbrace{\frac{d{\bar\zeta}}{B(\zeta,z)}}^{n-m-q-1}\right]\wedge\omega(\zeta)=0,
\end{equation}
where $J=(1,\dots,m)$ and $\left\{\epsilon_k(t)\right\}_{k=1}^m$ is an admissible path.\\
\indent
For the differential form $\Phi$ of homogeneity zero we have
$\Phi_{\alpha}=\Phi\Big|_{U_{\alpha}}=\Phi\Big|_{U_{\beta}}=\Phi_{\beta}$, and therefore
$$\sum_{\beta}\bar\partial\left(\vartheta_{\beta}\Phi_{\beta}\right)
=\sum_{\beta}\vartheta_{\beta}\bar\partial\Phi_{\beta}.$$
But then, using condition \eqref{Closed} and part (ii) of Lemma~\ref{BochnerIntegrals}
we obtain equality \eqref{ZeroPart}.
\end{proof}

\indent
To prove item (i) of Theorem~\ref{HodgeRepresentation} we use
Lemma~\ref{ClosedZero} in equality
\eqref{epsilonCurrents} and obtain for a $\bar\partial$-closed residual current
$\phi\in Z_R^{(0,n-m)}\left(V\right)$ and
an arbitrary $\gamma\in {\cal E}^{(n,0)}\left(V,{\cal L}\right)$ the equality
\begin{equation}\label{HodgeEquality}
\left\langle\phi,\gamma\right\rangle
=\left\langle I_q\left[\phi\right],\bar\partial\gamma\right\rangle
+\left\langle L_q\left[\phi\right],\gamma\right\rangle,
\end{equation}
where
\begin{itemize}
\item
$\left\langle I_q\left[\phi\right],\bar\partial\gamma\right\rangle$
\begin{equation}\label{ICurrent}
=\lim_{\eta\to 0}\lim_{\tau\to 0}\lim_{t\to 0}
\int_{\left\{|g_{\alpha}(z)|>\eta\right\}\cap\left\{T^{\delta(\tau)}_{\alpha}\right\}}
\vartheta_{\alpha}(z)\frac{\bar\partial\gamma(z)}{\prod_{k=1}^m F^{(\alpha)}_k(z)}
\wedge\left(\sum_{\beta}I_q^{\epsilon(t)}\left[\vartheta_{\beta}\Phi\right](z)\right),
\end{equation}
\item
$L_q\left[\phi\right]=0$ for $q=1,\dots, n-m-1$,\vspace{0.15in}
\item
$\left\langle L_{n-m}\left[\phi\right],\gamma\right\rangle$
\begin{equation}\label{LCurrent}
=\lim_{\eta\to 0}\lim_{\tau\to 0}\lim_{t\to 0}
\sum_{\alpha}\int_{\left\{|g_{\alpha}(z)|>\eta\right\}\cap\left\{T^{\delta(\tau)}_{\alpha}\right\}}
\vartheta_{\alpha}(z)\frac{\gamma(z)}{\prod_{k=1}^m F^{(\alpha)}_k(z)}
\wedge\left(\sum_{\beta}L_{n-m}^{\epsilon(t)}\left[\vartheta_{\beta}\Phi\right](z)\right)
\end{equation}
\end{itemize}
with operators $I_q^{\epsilon(t)}$ and $L_{n-m}^{\epsilon(t)}$ defined in
\eqref{SolutionFormula} and \eqref{LResidue} respectively.

\indent
From formula \eqref{LResidue} and
Lemma~\ref{LimitsExistence} it follows that the limit in \eqref{LCurrent} is well defined for
a $\bar\partial$-closed current $\phi$ and an arbitrary
$\gamma\in {\cal E}^{(n,0)}\left(V,{\cal L}\right)$. Then, since the left-hand side is also well
defined for $\gamma\in {\cal E}^{(n,0)}\left(V,{\cal L}\right)$, we obtain that
$\left\langle I_q\left[\phi\right],\bar\partial\gamma\right\rangle$ is also well defined
for a $\bar\partial$-closed residual current $\phi$ and
$\gamma\in {\cal E}^{(n,0)}\left(V,{\cal L}\right)$.\\
\indent
We notice that though the $\bar\partial$-closed current $\phi$ is defined by $C^{\infty}$ forms
satisfying condition \eqref{Closed}, the projection $L_{n-m}[\phi]$ is a residual current defined by the forms analytically depending on $z,{\bar z}$.\\
\indent
To prove item (ii) we use formula \eqref{LOperator} to obtain that operator $L_q$ is not zero only for $q=n-m$,
and therefore formula \eqref{ICurrent} gives a solution $I_q\left[\phi\right]$
of the $\bar\partial$-equation
$$\bar\partial\psi=\phi$$
for a $\bar\partial$-closed residual current $\phi^{(0,q)}$ of homogeneity zero for $q<n-m$.
Smoothness of $I_q\left[\phi\right](z)$ on $U_{\alpha}\setminus V^{\prime}_{\alpha}$
for smooth $\left\{\Phi_{\alpha}\right\}_{\alpha=0}^n$ follows from Proposition~\ref{Solution}.\\
\indent
For $q=n-m$ we have a nontrivial cohomology group $H_R^{n-m}\left(V,\ {\cal O}_V\right)$.
In the Proposition below we prove the necessary and sufficient condition from item (iii) in
Theorem~\ref{HodgeRepresentation} for a $\bar\partial$-closed residual current to be exact.

\begin{proposition}\label{Decomposition} Let $V\subset \C\P^n$ be a reduced
complete intersection subvariety as in \eqref{Variety} satisfying conditions of
Theorem~\ref{HodgeRepresentation}.
Then a $\bar\partial$-closed residual current $\phi\in Z_R^{(0,n-m)}\left(V\right)$
of homogeneity zero is $\bar\partial$-exact, i.e. there exists a current
$\psi\in C^{(0,n-m-1)}\left(V\right)$
such that $\bar\partial\psi=\phi$, iff condition \eqref{HodgeCondition} is satisfied.
\end{proposition}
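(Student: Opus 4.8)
The plan is to read off both implications of the stated equivalence directly from the homotopy identity \eqref{HodgeEquality}, specialized to $q=n-m$, where the test forms $\gamma$ run over $\mathcal{E}^{(n,0)}\left(V,\mathcal{L}\right)$, i.e. over smooth sections of the dualizing bundle $\omega^{\circ}_V$ of \eqref{Dualizing}. For $q=n-m$ identity \eqref{HodgeEquality} reads
\begin{equation*}
\left\langle\phi,\gamma\right\rangle=\left\langle I_{n-m}[\phi],\bar\partial\gamma\right\rangle+\left\langle L_{n-m}[\phi],\gamma\right\rangle,
\end{equation*}
with $I_{n-m}[\phi]$ and $L_{n-m}[\phi]$ the currents defined in \eqref{ICurrent} and \eqref{LCurrent}. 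The sufficiency direction is then immediate: if $L_{n-m}[\phi]=0$, the identity collapses to $\left\langle\phi,\gamma\right\rangle=\left\langle I_{n-m}[\phi],\bar\partial\gamma\right\rangle=\pm\left\langle\bar\partial I_{n-m}[\phi],\gamma\right\rangle$ for \emph{every} $\gamma\in\mathcal{E}^{(n,0)}\left(V,\mathcal{L}\right)$, so that $\psi:=\pm I_{n-m}[\phi]\in C^{(0,n-m-1)}\left(V\right)$ solves $\bar\partial\psi=\phi$. That $I_{n-m}[\phi]$ is a genuine current (and that the pairing $\left\langle I_{n-m}[\phi],\bar\partial\gamma\right\rangle$ is well defined) was already established in the discussion following \eqref{LCurrent}.

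For the necessity direction I would argue as follows. Suppose $\phi=\bar\partial\psi$ with $\psi\in C^{(0,n-m-1)}\left(V\right)$, and restrict \eqref{HodgeEquality} to $\bar\partial$-closed test forms, i.e. to holomorphic sections $\gamma\in H^0\left(V,\omega^{\circ}_V\right)$. For such $\gamma$ the middle term drops out since $\bar\partial\gamma=0$, and the left-hand side vanishes since $\left\langle\bar\partial\psi,\gamma\right\rangle=\pm\left\langle\psi,\bar\partial\gamma\right\rangle=0$. Hence
\begin{equation*}
\left\langle L_{n-m}[\phi],\gamma\right\rangle=0\quad\text{for every }\gamma\in H^0\left(V,\omega^{\circ}_V\right).
\end{equation*}
Thus the real-analytic $\bar\partial$-closed residual current $L_{n-m}[\phi]$ is annihilated by all holomorphic sections of the dualizing bundle. (Equivalently, \eqref{HodgeHomotopy} gives $\phi-L_{n-m}[\phi]=\bar\partial I_{n-m}[\phi]$, so $L_{n-m}[\phi]=\bar\partial\left(\psi-I_{n-m}[\phi]\right)$ is itself $\bar\partial$-exact; the remaining task is to upgrade this exactness, together with the vanishing of all the pairings above, to the vanishing of the current itself.)

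It remains to deduce $L_{n-m}[\phi]=0$ from the vanishing of all these pairings, and this is the heart of the matter. Here I would exploit the explicit description \eqref{LResidue}: the $z$-dependence of $L_{n-m}[\phi](z)$ is \emph{polynomial} — antiholomorphic through the factors $\left\langle\bar z\cdot w^{(\alpha)}\right\rangle^{r}$ with $0\le r\le d-n-1$ and through the determinant $\det\left[\bar z\ Q\ d\bar z\right]$, and holomorphic through the coefficients $Q(e^{i\phi_{\alpha}},w^{(\alpha)},z)$ — so that $L_{n-m}[\phi]$ ranges over a finite-dimensional space of residual currents with polynomial coefficients in $\left(z,\bar z\right)$ of bounded degree. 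On this explicit space the Coleff--Herrera residue pairing against $H^0\left(V,\omega^{\circ}_V\right)$ is precisely the Grothendieck residue pairing of the complete intersection $V$, which is nondegenerate; since $L_{n-m}$ produces a representative cohomologous to $\phi$ for each class, and $\dim H^0\left(V,\omega^{\circ}_V\right)=\dim H^{n-m}\left(V,\mathcal{O}_V\right)$, vanishing against every holomorphic $\gamma$ forces $L_{n-m}[\phi]=0$, completing the equivalence. The main obstacle is exactly this last step — establishing the nondegeneracy of the residue pairing on the explicit finite-dimensional image of $L_{n-m}$, an explicit form of Serre duality for $V$ — and to control it I would use the zero-current criterion of Lemma~\ref{ResidueReduction} together with Theorem~1.7.6(2) of \cite{CH} to determine precisely which monomials $\left\langle\bar z\cdot\zeta\right\rangle^{r}$ survive in \eqref{LResidue}, and then match the surviving densities against a basis of $H^0\left(V,\omega^{\circ}_V\right)$.
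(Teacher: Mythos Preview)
Your sufficiency argument is exactly the paper's. For necessity, however, you have made the problem harder than it needs to be, and the gap you yourself flag --- the nondegeneracy of the residue pairing on the image of $L_{n-m}$ --- is precisely what the paper's argument avoids.

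The key observation you are missing is that formula \eqref{LResidue} already \emph{is} a residue pairing of $\phi$ against an explicit holomorphic test form, with $z$ playing the role of a parameter rather than a variable to be integrated out later. Concretely, for each fixed $z$ and each $r$ define
\[
\gamma^r_z(\zeta)=\langle{\bar z}\cdot\zeta\rangle^r
\det\left[{\bar z}\ \overbrace{Q(\zeta,z)}^{m}\
\overbrace{d{\bar z}}^{n-m}\right]\wedge\omega(\zeta).
\]
Since $\bar z$ and $d\bar z$ are constants in $\zeta$ and the $Q_k(\zeta,z)$ are polynomials in $\zeta$, this is a $\bar\partial_\zeta$-closed section of the dualizing bundle, i.e.\ an element of the very space $H^0(V,\omega^{\circ}_V)$ you wanted to pair against. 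Formula \eqref{LResidue} then reads
\[
L_{n-m}[\phi](z)=\sum_{0\le r\le d-n-1}C(n,m,d,r)\,\langle\phi,\gamma^r_z\rangle
=\sum_{0\le r\le d-n-1}C(n,m,d,r)\,\langle\psi,\bar\partial_\zeta\gamma^r_z\rangle=0
\]
pointwise in $z$ (on $\{|g_\beta(z)|>\eta\}$, hence everywhere by \eqref{LCurrent}). No duality or nondegeneracy is needed: you do not first pair $L_{n-m}[\phi]$ against holomorphic $\gamma$'s and then try to invert; the very coefficients of $L_{n-m}[\phi](z)$ are already such pairings and vanish one by one. Your plan would work in principle, but it requires proving an explicit Serre duality statement that the paper never needs.
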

\begin{proof} Sufficiency of condition \eqref{HodgeCondition} immediately follows from
equality \eqref{HodgeEquality}. On the other hand, if $\phi=\bar\partial\psi$ for a current
$\psi\in  C^{(0,n-m-1)}(V)$ of homogeneity zero, then we have
equality
\begin{equation*}\langle\phi,\gamma\rangle
=\langle\psi,\bar\partial\gamma\rangle
\end{equation*}
satisfied for an arbitrary $\gamma\in {\cal E}^{(n,0)}\left(V,{\cal L}\right)$.\\
\indent
Applying the last equality to differential forms
$$\gamma^r_z(\zeta)=\langle{\bar z}\cdot\zeta\rangle^r
\wedge\det\left[{\bar z}\ \overbrace{Q(\zeta,z)}^{m}\
\overbrace{d{\bar z}}^{n-m}\right]\wedge\omega(\zeta),$$
and using Lemma~\ref{LimitsExistence} and holomorphic dependence
of the forms $\gamma^r_z$ on $\zeta$ we obtain equality
\begin{multline*}
\lim_{t\to 0}L_{n-m}^{\epsilon(t)}\left[\Phi\right](z)
=\sum_{0\leq r\leq d-n-1}C(n,m,d,r)\lim_{t\to 0}\int_{\left\{|\zeta|=1,\
\left\{|P_k(\zeta)|=\epsilon_k(t)\right\}_{k=1}^m\right\}}
\frac{\gamma^r_z(\zeta)\wedge\Phi(\zeta)}{\prod_{k=1}^mP_k(\zeta)}\\
=\sum_{0\leq r\leq d-n-1}C(n,m,d,r)
\cdot\langle\psi,\bar\partial_{\zeta}\gamma^r_z\rangle=0
\end{multline*}
for an arbitrary $z$ such that $|g_{\beta}(z)|>\eta$.\\
\indent
Using this equality in \eqref{LCurrent} we obtain the necessity of condition
\eqref{HodgeCondition}.
\end{proof}

\indent
This concludes the proof of Theorem~\ref{HodgeRepresentation}.



\begin{thebibliography}{WIDT}
\bibitem[AS1]{AS1} M. Andersson, H. Samuelsson, Weighted Koppelman formulas and
the $\bar\partial$-equation on an analytic space, J. Funct. Anal. 261 (2011), no. 3, 777-802.
\bibitem[AS2]{AS2} M. Andersson, H. Samuelsson, A Dolbeault-Grothendieck lemma on complex spaces via Koppelman formulas, Inv. Math. (2012) 190, 261-297.
\bibitem[At]{At} M. F. Atiyah, Resolution of singularities and division of distributions,
Comm. Pure Appl. Math. 23 (1970), 145-150.
\bibitem[BGY]{BGY} C. Berenstein, R. Gay, A. Yger, Analytic continuation of currents
and division problems, Forum Math. 1 (1989), 15-51.
\bibitem[BG]{BG} I. Bernstein, S. Gelfand, Meromorphy of the function $P_{\lambda}$,
Funk. Anal. i Prilo\v z., 3 (1969), 84-85.
\bibitem[Be]{Be} I. Bernstein, Analytic continuation of generalized functions with respect
to a parameter, Funk. Anal. i Prilo\v z. 6 (1972), 26-40.
\bibitem[Bn]{Bn} B. Berndtsson, Integral formulas on projective space and the Radon
transform of Gindikin-Henkin-Polyakov, Publ. Math. Universitat Autonoma de Barcelona,
32(1) (1988), 7-41.
\bibitem[BDIP]{BDIP} J. Bertin, J-P. Demailly, L. Illusie, C. Peters,
Introduction to Hodge Theory, SMF/AMS TEXTS and MONOGRAPHS, V. 8, 2002.
\bibitem[Bo]{Bo} S. Bochner, Analytic and meromorphic continuation by means of Greens formula, Ann. of Math., 44 (1943), 652-673.
\bibitem[CH]{CH} N.R. Coleff, M.E. Herrera, Les Courants R\'esiduels Associ\'es \`a une Forme
M\'eromorphe, Lecture Notes in Mathematics, 633, Springer Verlag, New York, 1978.
\bibitem[Dl]{Dl} P. Deligne, Th\'eorie de Hodge I, II, III,
Actes du Congr\`es International des Math\'ematiciens (Nice, 1970), Tome 1, 425-430,
Gauthier-Villars, Paris, 1971, Inst. Hautes \'Etudes Sci. Publ. Math. No. 40 (1971), 5-57,
No. 44 (1974), 5-77.
\bibitem[F]{F} R. Fueter, \"Uber einen Hartogsschen Satz in der Theorie der analytischen Funktionen von n komplexen Variablen, Comment. Math. Helv., 14 (1942), 394-400.
\bibitem[Go]{Go} E. G$\ddot{\text o}$tmark, Weighted integral formulas on manifolds,
Ark. Mat. 46 (2008), no. 1, 43-68.
\bibitem[GSS]{GSS} E. G$\ddot{\text o}$tmark, H. Samuelsson, H. Sepp$\ddot{\text a}$nen,
Koppelman formulas on Grassmannians, J. Reine Angew. Math. 640 (2010), 101-115.
\bibitem[GH]{GH} Ph. Griffiths, J. Harris, Principles of algebraic geometry,
John Wiley and Sons, New York, 1978.
\bibitem[Gr]{Gr} A. Grothendieck, The cohomology theory of abstract algebraic varieties,
Proc. Internat. Congress Math. (Edinburgh, 1958), 103-118, 1960,
Cambridge Univ. Press, New York.
\bibitem[Ha]{Ha} R. Hartshorne, Algebraic Geometry, Springer-Verlag, New York, 1977.
\bibitem[He]{He} G. M. Henkin, The Levy equation and analysis on pseudo-convex manifolds,
Russian Mathematical Surveys, 32 (1977).
\bibitem[HL]{HL} M. Herrera, D. Lieberman, Residues and principal values on complex
spaces, Math. Ann. 194 (1971), 259-294.
\bibitem[HP1]{HP1} G.M. Henkin, P.L. Polyakov, Homotopy formulas for the
$\bar\partial$-operator on $\C\P^n$ and the Radon-Penrose transform, 
Izv. Akad. Nauk SSSR Ser. Mat. 50:3 (1986), 566-597.
\bibitem[HP2]{HP2} G.M. Henkin, P.L. Polyakov, The Grothendieck-Dolbeault lemma
for complete intersections, C. R. Acad. Sci., Ser I, Math. 308 (1989), 405-409.
\bibitem[HP3]{HP3} G.M. Henkin, P.L. Polyakov, Residual $\bar\partial$-cohomology and
the complex Radon transform on subvarieties of $\C\P^n$, Math. Ann., 354:2 (2012), 497-527.
\bibitem[HP4]{HP4} G.M. Henkin, P.L. Polyakov, Inversion formulas for complex Radon transform
on projective varieties and boundary value problems for systems of linear PDE,
Proc. Steklov Inst. Math., v. 279 (2012), 242-256.
\bibitem[Hi]{Hi} H. Hironaka, The resolution of singularities of an algebraic variety over a field
of characteristic zero, Ann. Math. 19 (1964), 109-326.
\bibitem[Ho]{Ho} W.V.D. Hodge, The theory and applications of harmonic Integrals, 2-nd ed.,
Cambridge University Press, 1952.
\bibitem[Kd]{Kd} K. Kodaira, Harmonic fields in Riemannian manifolds, Ann. of Math. 50,
(1949), 587-665.
\bibitem[Kp]{Kp} W. Koppelman, The Cauchy integral for differential forms, Bull. AMS 73 (1967),
554-556.
\bibitem[Le]{Le} J. Leray, Le calcul diff\'erentiel et int\'egral sur une vari\'et\'e analytique
complexe, (Probl\`eme de Cauchy. III), Bull. Soc. Math. France 87 (1959) 81-180.
\bibitem[Li]{Li} I. Lieb, Die Cauchy-Riemannschen Differentialgleichungen auf streng
pseudokonvexen Gebieten, Math. Ann. 190 (1970), 6-44.
\bibitem[Ma]{Ma} E. Martinelli, Sopra una dimonstrazione di R. Fueter per un teorema di
Hartogs, Comment. Math. Helv., 15 (1943), 340-349.
\bibitem[Mo]{Mo} Gr. C. Moisil, Sur les quaternions monog\`enes, Bull. Sci. Math. (2),
55 (1931), 168-174.
\bibitem[O]{O} N. \O vrelid, Integral representation formulas and $L_p$-estimates for the $\bar\partial$-equation, Math. Scand. 29 (1971), 137-160.
\bibitem[PT]{PT} M. Passare, A. Tsikh, Defining the residue of a complete intersection,
Complex analysis, harmonic analysis and applications (Bordeaux, 1995), 250-267,
Pitman Res. Notes Math. Ser., 347, Longman, Harlow, 1996.
\bibitem[Pa1]{Pa1} M. Passare, Residues, currents, and their relation to ideals of
holomorphic functions, Math. Scand. 62 (1988), 75-152.
\bibitem[Pa2]{Pa2} M. Passare, A calculus for meromorphic currents, J. Reine Angew. Math.
392 (1988), 37-56. 
\bibitem[Po]{Po} P. L. Polyakov, Cauchy-Weil formula for differential forms, Mat. Sb.
(N.S.) 85:3 (1971), 383-398.
\bibitem[RR]{RR} J.P. Ramis, G. Ruget, Complexe dualisant et th\'eor\`emes de dualit\'e
en g\'eom\'etrie analytique complexe, Publ. Math. de l'I.H.E.S., 38 (1970), 77-91.
\bibitem[RS]{RS} R. Remmert, K. Stein, $\ddot{\text U}$ber die wesentlichen
Singularit$\ddot{\text a}$ten analytischen Mengen, Math. Ann. 126, (1953), 263-306.
\bibitem[SS]{SS} H. Samuelsson, H. Sepp$\ddot{\text a}$nen, Koppelman formulas
on flag manifolds and harmonic forms, Math. Z. 272 (2012), 1087-1095.
\bibitem[V]{V} C. Voisin, Hodge theory and complex algebraic geometry I,II,
Cambridge Studies in Advanced Mathematics, 76-77, Cambridge University Press,
Cambridge, 2002-2003.
\bibitem[Wi]{Wi} A. Weil, L'int\'egrale de Cauchy et les fonctions de plusieurs variables,
Math. Ann. 111 (1935), no. 1, 178-182.
\bibitem[Wy]{Wy} H. Weyl, On Hodge's theory of harmonic integrals,
Ann. of Math. (2) 44, (1943), 1-6.
\end{thebibliography}
\end{document}